\newtheorem{theorem}{Theorem}
\newtheorem{corollary}[theorem]{Corollary}
\newtheorem{lemma}[theorem]{Lemma}
\newtheorem{proposition}[theorem]{Proposition}
\newtheorem{definition}[theorem]{Definition}
\newtheorem{example}{Example}
\newtheorem{remark}{Remark}
\newenvironment{proof} {\textsc{Proof}\quad} {\hfill $\blacksquare$\\}
\newcommand{\VDash}{%
  \mathrel{
    \text{\clipbox{0pt 0pt {.8\width} 0pt}{$\Vdash$}}
    \mkern.9mu
    \text{\adjustbox{width=.87\width,height=\height}{$\vDash$}}
  }
}
\newcommand{\CPL}{\mathbf{CPL}}
\newcommand{\M}{\mathcal{M}}
\newcommand{\N}{\mathcal{N}}
\newcommand{\la}{\langle}
\newcommand{\ra}{\rangle}
\newcommand{\lr}[1]{\la #1 \ra}
\newcommand{\lra}{\leftrightarrow}
\renewcommand{\phi}{\varphi}
\newcommand{\SK}{\mathbf{K}}
\newcommand{\Kh}{\mathsf{K\!h }}
\newcommand{\K}{\mathsf{K}}
\newcommand{\hK}{\widehat{\K}}
\newcommand{\hKh}{\widehat{\Kh}}
\newcommand{\DELKh}{\mathbf{DELKh}}
\newcommand{\Prop}{\mathbf{P}}
\newcommand{\PL}{\mathbf{PL}}
\newcommand{\InqL}{\mathbf{InqB}}
\newcommand{\InqKhL}{\mathbf{InqKhL}}
\newcommand{\inqlor}{\rotatebox[]{90}{$ \eqslantless $}}
\newcommand{\InqDEL}{\mathbf{InqDEL}}
\newcommand{\LInqKh}{\mathbf{DELKh}}
\newcommand{\LEL}{\mathbf{EL}}
\newcommand{\LDEL}{\mathbf{DEL}}
\newcommand{\LPL}{\mathbf{PL}}
\newcommand{\LPP}{\PL^{\Prop}}
\newcommand{\R}{R}
\newcommand{\RR}{\textit{RL}}
\newcommand{\Rel}{\mathcal{R}}
\newcommand{\V}{V}
\renewcommand{\S}{S}
\renewcommand{\SS}{\mathcal{S}}
\renewcommand{\emptyset}{\varnothing}
\newcommand{\SDELKh}{\mathsf{SDELKh}}
\newcommand{\SFive}{\textsf{S5}}
\newcommand{\SFiveL}{\mathbf{S5}}
\newcommand{\SFour}{\textsf{S4}}
\newcommand{\SINTDN}{\mathsf{SINTDN}}
\newcommand{\INTU}{\ensuremath{\mathtt{INTU}}}
\newcommand{\TAUT}{\ensuremath{\mathtt{TAUT}}}
\newcommand{\NECK}{\ensuremath{\mathtt{NEC_\K}}}
\newcommand{\NECB}{\ensuremath{\mathtt{NEC_\Box}}}
\newcommand{\DISTK}{\ensuremath{\mathtt{DIST_\K}}}
\newcommand{\DISTB}{\ensuremath{\mathtt{DIST_\Box}}}
\newcommand{\AxTK}{\ensuremath{\mathtt{T_{\K}}}}
\newcommand{\AxTransK}{\ensuremath{\mathtt{4_{\K}}}}
\newcommand{\AxEucK}{\ensuremath{\mathtt{5_{\K}}}}
\newcommand{\AxTB}{\ensuremath{\mathtt{T_{\Box}}}}
\newcommand{\AxTransB}{\ensuremath{\mathtt{4_{\Box}}}}
\newcommand{\MP}{\ensuremath{\mathtt{MP}}}
\newcommand{\KKhp}{\ensuremath{\mathtt{KKhp}}}
\newcommand{\KhK}{\ensuremath{\mathtt{KhK}}}
\newcommand{\KhC}{\ensuremath{\mathtt{Kh\land}}}
\newcommand{\KhD}{\ensuremath{\mathtt{Kh\lor}}}
\newcommand{\KhI}{\ensuremath{\mathtt{Kh\!\to}}}
\newcommand{\RKhI}{\ensuremath{\mathtt{RKh\!\to}}}
\newcommand{\Per}{\ensuremath{\mathtt{Per}}}
\newcommand{\Ver}{\ensuremath{\mathtt{Ver}}}
\newcommand{\PR}{\ensuremath{\mathtt{PR}}}
\newcommand{\EU}{\ensuremath{\mathtt{EU}}}
\newcommand{\Khbot}{\ensuremath{\mathtt{Kh\bot}}}
\newcommand{\AxTransKh}{\ensuremath{\mathtt{4_\Kh}}}
\newcommand{\AxEucKh}{\ensuremath{\mathtt{5_\Kh}}}
\newcommand{\INV}{\ensuremath{\mathtt{INV}}}
\newcommand{\KINV}{\ensuremath{\mathtt{KINV}}}
\newcommand{\BD}{\ensuremath{\mathtt{B\lor}}}
\newcommand{\hKINV}{\ensuremath{\mathtt{hKINV}}}
\newcommand{\BKD}{\ensuremath{\mathtt{BK\lor}}}
\newcommand{\rRE}{\ensuremath{\mathtt{rRE}}}
\newcommand{\NDkh}{\ensuremath{\mathtt{KhND}}}
\newcommand{\ND}{\ensuremath{\mathtt{ND}}}
\newcommand{\KP}{\ensuremath{\mathtt{KP}}}
\newcommand{\KhKP}{\ensuremath{\mathtt{KhKP}}}
\newcommand{\KhPeirce}{\ensuremath{\mathtt{KhPEIRCEp}}}
\newcommand{\KKhN}{\ensuremath{\mathtt{KKhN}}}
\newcommand{\DNkh}{\ensuremath{\mathtt{KhDNp}}}
\newcommand{\DN}{\ensuremath{\mathtt{DNp}}}
\begin{document}

  \title{Inquisitive Logic as an Epistemic Logic of Knowing How\footnote{An earlier draft of the paper to appear in \textit{Annals of Pure and Applied Logic} with the same title. Please check the published paper for the final version. }}
  \author{Haoyu Wang}
  \author{Yanjing Wang}
    \affil{Department of Philosophy, Peking University}
  \author{Yunsong Wang}
    \affil{ILLC, University of Amsterdam}

\maketitle

\begin{abstract}
In this paper, we present an alternative interpretation of propositional inquisitive logic as an epistemic logic of \textit{knowing how}. In our setting, an inquisitive logic formula $\alpha$ being supported by a state is formalized as \textit{knowing how to resolve $\alpha$} (more colloquially, \textit{knowing how $\alpha$ is true}) holds on the S5 epistemic model corresponding to the state. Based on this epistemic interpretation, we use a dynamic epistemic logic with both know-how and know-that operators to capture the epistemic information behind the innocent-looking connectives in inquisitive logic. We show that the set of valid know-how formulas corresponds precisely to the inquisitive logic. The main result is a complete axiomatization with intuitive axioms using the full dynamic epistemic language. Moreover, we show that the know-how operator and the dynamic operator can both be eliminated without changing the expressivity over models, which is consistent with the modal translation of inquisitive logic existing in the literature. We hope our framework can give an intuitive alternative interpretation to various concepts and technical results in inquisitive logic, and also provide a powerful and flexible tool to handle both the inquisitive reasoning and declarative reasoning in an epistemic context.  
\end{abstract}



\maketitle

\section{Introduction}
\textit{Inquisitive logic} captures the valid reasoning patterns of statements and questions in a neat uniform framework \cite{Ciardelli2011}. There are two major views on inquisitive logic: one may view it as a non-classical logic as presented in the early days of the field (cf. e.g., \cite{Ciardelli2011}); alternatively, as endorsed by various recent works, one can also view the framework as a conservative extension of classical logic taking the inquisitive disjunction and other machinery as new additions on top of the classical ones (cf. e.g., \cite{Ciardelli16phd,Ciardelli2018}). According to the first \textit{non-classical view}, the basic system $\InqL$ of propositional inquisitive logic is a \textit{weak intermediate logic} that includes all the axioms of intuitionistic logic without the axiom of excluded middle, but is \textit{not} closed under uniform substitution. Moreover, inquisitive logic has some surprising close connections to some other logics such as Medvedev Logic \cite{Ciardelli2011}, and it can be viewed as a disguised \textit{propositional intuitionistic dependence logic} \cite{Yang2014,Ciardelli2020}. The second \textit{extension-view} gives flexibility in designing logical systems combining the power of both classical reasoning and inquisitive reasoning. As a rapidly growing field of logic, besides being intensively studied itself, inquisitive logic has been extended widely with various modalities and other non-classical connectives (cf. e.g., \cite{CiardelliR15,CiardelliB19,Holliday20,PuncocharS21}).

The characteristic semantic feature of inquisitive logic is that it is based on the \textit{support} relation over  \textit{information states} (or simply \textit{states}), instead of the usual satisfaction relation over \textit{possible worlds} widely used in various classical and non-classical logics. As a logic, $\InqL$ collects all the valid propositional formulas supported by  all the states. Intuitively, to a modal logician, a state in (propositional) inquisitive semantics is simply \textit{a set} of possible worlds. To an epistemic logician, a state can be further viewed as an epistemic ($\SFive$) Kripke model, where the agent is not sure which possible world is the actual one.\footnote{It is interesting to note that \textit{information states} in game theory are precisely those epistemically indistinguishable possible worlds.}  It is very natural to ask whether there is an intrinsic connection between epistemic logic and inquisitive logic, given both are defined over similar models. In fact, there is a syntactic translation from inquisitive logic to modal and epistemic logic (e.g., \cite[Section 5.4]{Ciardelli16phd}). In this paper, we focus on whether we can have an intuitive epistemic interpretation semantically. 

Our approach is based on the non-classical view of inquisitive logic. 
Thanks to the intimate connection between inquisitive logic and Medvedev logic that we mentioned, we can make use of a crucial observation about the epistemic interpretation of intuitionistic and intermediate logics. Inspired by the original finite-problem semantics of Medvedev logic \cite{Med62}, Wang \cite{Wang2021} proposed to interpret intuitionistic truth of a formula $\alpha$ as \textit{knowing how to prove\slash solve $\alpha$}. Similar informal ideas of understanding intuitionistic truth as knowledge-how appeared a few times in the literature of intuitionistic logic in the past century (cf. e.g., \cite{Melikhov2013}), starting from the very first paper by Heyting explaining the intuitive meaning of intuitionistic statements  \cite{Heyting1930}: 
\begin{quote}
To satisfy the intuitionistic demands, the statement must be the realisation of the expectation expressed by the proposition $p$. Here, then, is the \textit{Brouwerian statement} of $p$: It is known how to prove $p$. 
\end{quote}
In contrast with the previous informal philosophical discussions, we now have a formal way to capture  this interpretation of intuitionistic truth not merely conceptually but also mathematically, based on the techniques for epistemic logics of \textit{know-wh} proposed and studied by Wang (cf. e.g., \cite{Wang2018,Wang17d}). The main idea is to introduce the so-call \textit{bundled modalities}, which pack a quantifier and an epistemic modality together to formalize the \textit{de re} knowledge expressed by \textit{knowing how\slash why\slash what and so on} (cf. e.g., \cite{Wang2016,Xu19}). For example, \textit{knowing how to prove $\alpha$} (written as $\Kh\alpha$) can be rendered as \textit{there exists} a proof $\rho$ such that \textit{it is known that} $\rho$ proves $\alpha$. Bundled modalities also lead to bundled fragments of first-order modal logic, which are often decidable \cite{Wang17d,PadmanabhaRW18,Abfgd22}. 

Combining this technique with some formalized BHK-interpretation of intuitionistic logic may allow us to turn intuitionistic logic and various intermediate logics into epistemic logics of \textit{knowing how}. The general method is to use a powerful epistemic language based on classical logic to \textit{unload} the implicit epistemic content  hidden behind the propositional language of propositional  intuitionistic logic, foreseen by Hintikka and van Benthem viewing intuitionistic logic as an \textit{implicit} epistemic logic  \cite{Hintikka2001IntuitionisticLogic,VanBenthem2018}. An intuitionistic logic formula $\alpha$ is first translated into a know-how formula $\Kh\alpha$ in our setting; then, depending on the structure of $\alpha$ and the BHK-interpretation, we can further ``decode'' $\alpha$ by reducing its complexity within  the logical language, e.g., $\Kh(\alpha\lor\beta)$ can be decomposed to $(\Kh\alpha \lor \Kh\beta)$, where the connectives outside the scope of $\Kh$ are \textit{classical}. 
This also helps us to understand the distinct role of the negation as the bridge between the classical and the intuitionistic settings. Such an epistemic approach can make intuitionistic logic and its relatives more intuitive, and the existing important technical results become more transparent. 

\medskip

In this paper, we apply such ideas to propositional inquisitive logic as a variant of intuitionistic logic under the non-classical view mentioned above. Note that the \textit{intended} interpretation of inquisitive logic does not make reference to knowledge, thus what we are proposing is an \textit{alternative} epistemic  interpretation. Instead of talking about \textit{proofs} and \textit{solutions} in intuitionistic logic, here we are concerned with \textit{resolutions} of issues raised by inquisitive formulas. Roughly speaking, a formula can be true in various ways, e.g., $\alpha\lor \beta$ can be true because of the truth of $\alpha$ or the truth of $\beta$. We say it is \textit{resolved} if it is settled with a particular way of being true. In a nutshell, we interpret \textit{``$s$ supports $\alpha$''} in inquisitive semantics as \textit{``knowing how to \textit{resolve} $\alpha$''} (or simply \textit{``knowing how $\alpha$ is true''}) over the epistemic model corresponding to $s$. For example, \textit{``$s$ supports $\alpha\lor\beta$''}  becomes \textit{``knowing how $\alpha\lor\beta$ is true''} over the corresponding epistemic model, which intuitively requires either \textit{knowing how $\alpha$ is true} or \textit{knowing how $\beta$ is true}, thus  \textit{resolving} the question raised by $\alpha\lor \beta$. This interpretation is also extended to the entailment relation, i.e., $\alpha$ entails $\beta$ in inquisitive logic is interpreted as \textit{knowing how $\alpha$ is true} entails \textit{knowing how $\beta$ is true}, thus interpreting the reasoning in inquisitive logic as \textit{know-how preserving}. This can be considered as the analog of correspondence of entailment in propositional logic and the standard epistemic logic of knowing that.

\medskip

Actually, the idea of interpreting inquisitive formulas in terms of \textit{knowing how} first  appeared in Ciardelli's master thesis in the early days of inquisitive logic \cite{Ciardelli04}. In \cite{Ciardelli2011}, the authors also made it more precise by using a notion of \textit{realization} (or, say, \textit{resolution}) inspired by the BHK-interpretation of intuitionistic logic, to which we will come back in Section \ref{sec.related work} with detailed discussions. 

Technically, compared to intuitionistic logic, there is a crucial simplification in inquisitive logic that each atomic proposition $p$ has one and only one possible resolution $p$. This is due to the assumption in inquisitive semantics that atomic propositions stand for statements, and  questions are formed only via question-forming operators like the inquisitive disjunction (cf. \cite[Section 2.5.5]{Ciardelli16phd}). Translated into our epistemic interpretation, this assumes that you always \textit{know how} to resolve an atomic formula if you \textit{know that} it is true.  On the other hand, \textit{knowing that $p$ is true} is a presupposition of \textit{knowing how it is true}. Therefore, \textit{knowing how} to resolve the atomic proposition $p$ is equivalent to  \textit{knowing that} $p$ is \textit{resolvable} in our epistemic rendering of inquisitive logic. Note that this cannot be extended to an arbitrary formula $\alpha$, which would trivialize the know-how formulas. Nevertheless, as we will see later, this simplification leads to the characteristic features (and the charm) of inquisitive logic. Based on this assumption about atomic propositions, we will eventually be able to eliminate the \textit{knowing how} operator in our ``epistemicization'' of the inquisitive logic.  All these will become more clear, after establishing our technical framework.  

\medskip

Here we summarize our main contributions in this paper:
\begin{itemize}
    \item From an epistemic point of view, we propose a dynamic-epistemic logical framework of \textit{knowing how} to give an alternative interpretation of inquisitive logic. 
    \item We show that the propositional inquisitive logic, as a weak  intermediate logic, is exactly the valid know-how fragment of our logic. 
    \item We obtain a complete axiomatization of the full dynamic epistemic logic with intuitive axioms, which can transparently explain the axioms and results of inquisitive logic from our epistemic perspective. 
    \item We show that both the knowing how modality and the dynamic modality can be eliminated in terms of expressivity, and thus inquisitive logic can be viewed as a fragment of epistemic logic.
\end{itemize}
More conceptually, with our approach, we want to bridge: 
\begin{itemize}
    \item \textit{\textbf{possible worlds and states}} by viewing states as epistemic models to base our semantics on.
    \item \textit{\textbf{classical and non-classical logics}} by using a modal logic based on classical connectives to interpret the non-classical inquisitive logic.
    \item \textit{\textbf{non-modal and modal formulations}} by using the epistemic language of knowing how to reveal the rich dynamic-epistemic information behind the propositional formulas of inquisitive logic. 
    \item \textit{\textbf{\textit{de re} and \textit{de dicto} knowledge}} by connecting the \textit{knowledge-how} with \textit{knowledge-that} relevant to inquisitive reasoning.
\end{itemize}
We hope this epistemic approach can further enhance the power of inquisitive logic to wider applications while keeping its spirit. This may make inquisitive logic more intuitive to an audience who are not familiar with non-classical logics and support-based semantics. Future directions are discussed in Section \ref{sec.conc}. 

\paragraph{Structure of the paper} We recall the language and semantics of propositional inquisitive logic in Section \ref{sec.pre}.  Section \ref{sec.khlogic} introduces the language and semantics of our dynamic epistemic logic of knowing how, and shows that it captures the original inquisitive logic precisely as a fragment. In Section \ref{sec.ax}, we obtain an intuitive axiomatization for the full language and show its completeness by using the reduction axioms. Section \ref{sec.understanding} looks at the core concepts in inquisitive semantics from our intuitive epistemic perspective. In Section \ref{sec.related work}, we discuss the related work,  particularly about inquisitive modal logic. We conclude in Section \ref{sec.conc} with future directions.

\section{Preliminaries: Inquisitive Logic}\label{sec.pre}
We first present some basic definitions and results of (propositional) inquisitive logic $\InqL$ following the expositions of  \cite{Ciardelli2011,Ciardelli16phd}.

\begin{definition}[Language $\PL$]
Given a countable set $\Prop$ of proposition letters, the language of propositional logic ($\LPP$) is defined as follows:
$$\alpha::= p\mid \bot\mid (\alpha\land\alpha)\mid (\alpha\lor\alpha)\mid (\alpha\to\alpha)$$
where $p\in \Prop$. For abbreviations, we write {$\neg\alpha$} for $\alpha\to\bot$, $\top$ for $\neg\bot$, and $\alpha\lra\beta$ for $(\alpha\to \beta)\land (\beta\to \alpha)$.
\end{definition}
$\Prop$ is assumed to be countable as in \cite{Ciardelli16phd}. 
We write $\PL$ for $\LPP$ when $\Prop$ is given in the context. For readability, in the rest of the paper, we often omit the parentheses if no ambiguity arises.   

Instead of the satisfaction relation based on  possible worlds, $\InqL$ adopts the following \textit{support relation} in its semantics based on states, which makes $\InqL$ behave non-classically.

While the \emph{states} were initially defined via the concept of \emph{index} \cite{Ciardelli09}, it was later defined as a subset of the world set of a \textit{(propositional) information model} in the recent literature (cf. e.g., \cite{Ciardelli16phd}), here we adopt the latter definition.\footnote{Note that in contrast with \cite{Ciardelli16phd}, we do not allow $W$ to be empty in order to ease the later presentation without changing any technical result.}

\begin{definition}[Information model]
\label{imodel}
Given $\Prop$, an (information) model is a pair $\M=\langle W, \V\rangle$ where: 
 \begin{itemize}
     \item $W$ is a non-empty set of possible worlds; 
     \item $\V: W\to \wp(\Prop)$ is a valuation function.
 \end{itemize}
 Given $\M$, we refer to its components by $W_\M$ and $\V_\M$. A \emph{full model} is a model such that $\V[W]=\wp(\Prop).$
\end{definition}
Following \cite{Ciardelli2011}, the inquisitive semantics of $\LPL$ over information models is given by the \textit{support} relation. 
\begin{definition}[Support]
\label{support}
Given $\Prop$ and an information model $\M=\la W,\V\ra$, an \emph{(information) state} $s\subseteq W$ is a subset of $W$. $W$ and $\emptyset$ are called  \emph{trivial state} and \emph{inconsistent state} respectively. \emph{Support} is a relation between states and formulas (written as $\M,s\Vdash\alpha$):
	\begin{enumerate}
		\item $\M,s\Vdash p$ iff $\forall w\in s, p\in w$.
		\item $\M,s\Vdash\bot$ iff $s=\varnothing$. 
		\item $\M,s\Vdash (\alpha\land\beta)$ iff $\M,s\Vdash\alpha$ and $\M,s\Vdash\beta$.	
		\item $\M,s\Vdash (\alpha\lor\beta)$ iff $\M,s\Vdash\alpha$ or $\M,s\Vdash\beta$.
		\item $\M,s\Vdash (\alpha\to\beta)$ iff $\forall t\subseteq s:$ if $\M,t\Vdash\alpha$ then $\M,t\Vdash\beta$.
	\end{enumerate}
\end{definition}

We write $s\Vdash\alpha$ for $\M,s\Vdash\alpha$ when no confusion arises.

\begin{definition}[Entailment ($\Vdash$)]
	A set of $\PL$-formulas $\Gamma$ entails a $\PL$-formula
$\alpha$ in inquisitive semantics, $\Gamma\Vdash\alpha$, if and only if for any state $s$ in any model $\M$ if $\M,s\Vdash\Gamma$ then $\M,s\Vdash \alpha$. We say $\alpha$ is \emph{valid} if $\Vdash \alpha.$
\end{definition}

\begin{definition}[Inquisitive logic]
	Inquisitive logic, $\InqL$, is the set of $\PL$-formulas that are valid in inquisitive semantics, i.e., the set of formulas that are supported by all states in all models.
\end{definition}

It is straightforward to show: 
\begin{proposition} [\cite{Ciardelli2011}]\label{prop.full}
For any $\PL$-formula $\alpha$, 
$\alpha \in \InqL$ iff for any full model $\M$, $\M,W_\M\Vdash \alpha$.
\end{proposition}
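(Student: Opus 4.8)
The forward direction is immediate from the definitions: if $\alpha$ is supported by every state in every model, then in particular $\M,W_\M\Vdash\alpha$ for every full model $\M$. So the content lies in the converse, which I would prove contrapositively: assuming $\alpha\notin\InqL$, the goal is to exhibit a \emph{full} model $\M'$ with $\M',W_{\M'}\not\Vdash\alpha$.

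The argument rests on two routine properties of the support relation, each established by a direct induction on $\alpha$. The first is \textbf{Persistence}: if $\M,s\Vdash\alpha$ and $t\subseteq s$, then $\M,t\Vdash\alpha$; a by-product is that $\M,\emptyset\Vdash\alpha$ for every $\alpha$, so any failure of support already occurs at a non-empty state. The second is \textbf{Locality}: the truth value of $\M,t\Vdash\alpha$ for $t\subseteq s$ depends only on $s$ and on the values $\V_\M(w)$ for $w\in s$; concretely, if $s$ is a common subset of $W_\M$ and $W_{\M'}$ on which $\V_\M$ and $\V_{\M'}$ agree, then $\M,t\Vdash\alpha\iff\M',t\Vdash\alpha$ for all $t\subseteq s$ (so, among other things, support is invariant under renaming of worlds). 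In both inductions the only clause that is not purely mechanical is the one for $\to$, where the quantifier ranges over substates $t'\subseteq t$; but each such $t'$ is again a substate of $s$, so the inductive hypothesis applies to it, which is exactly what is needed.

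Granting these, the construction is short. Since $\alpha\notin\InqL$, fix a model $\M$ and a state $s$ (automatically non-empty, by Persistence) with $\M,s\not\Vdash\alpha$. Define $\M'=\langle W',\V'\rangle$ by $W'=s\cup\{u_X: X\subseteq\Prop\}$, where the $u_X$ are fresh worlds not already lying in $s$, and let $\V'$ agree with $\V_\M$ on $s$ and send each $u_X$ to $X$. Then $\V'[W']=\wp(\Prop)$, so $\M'$ is a full model, and $s$ is a common subset of $W_\M$ and $W'$ on which the two valuations agree. By Locality, $\M,s\not\Vdash\alpha$ gives $\M',s\not\Vdash\alpha$; and since $s\subseteq W'$, the contrapositive of Persistence upgrades this to $\M',W'\not\Vdash\alpha$, which is what was required.

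The only step that needs any care — and it is bookkeeping rather than a real obstacle — is stating Locality at the right level of generality, so that the passage from $\M$ to its full extension $\M'$ is licensed; the underlying point is simply that a full model already realises every conceivable valuation pattern, so restricting to full models and their trivial states loses nothing. Everything else is a one-line induction or a direct appeal to Definition \ref{support}.
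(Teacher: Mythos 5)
Your argument is correct. The paper itself offers no proof of this proposition (it is imported from Ciardelli--Roelofsen with the remark that it is straightforward), but your route is exactly the standard one and uses only tools the paper records anyway: your ``Locality'' lemma is Proposition \ref{prop.irre}, persistence is the analog of \cite[Prop.~2.4]{Ciardelli2011}, and padding the refuting state $s$ with fresh worlds realising every $X\subseteq\Prop$ indeed yields a full model (there is no cardinality constraint on $W$, so $\wp(\Prop)$ being uncountable is harmless), after which the contrapositive of persistence transfers the failure from $s$ up to the trivial state $W'$. One cosmetic point: the fact that $\M,\emptyset\Vdash\alpha$ for all $\alpha$ is its own (trivial) induction rather than a by-product of persistence, which only yields it when some state already supports $\alpha$; in any case your construction does not actually need $s$ to be non-empty, since Locality plus persistence alone give $\M',W'\not\Vdash\alpha$.
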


Now we present the proof system $\SINTDN$ and  two axioms schemata $\KP$ and $\ND_k$, which in combination give rise to two distinct axiomatizations of $\InqL$.
\begin{center}
System $\SINTDN$\\
	\begin{tabular}{ll}
		{\textbf{Axioms}}&\\
		\begin{tabular}[t]{lll}
			$\INTU$ & \text{Intuitionistic validities}\\
			$\DN$& $\neg\neg p\to p$ \text{for all} $p\in\Prop$ \\
			\end{tabular}
	\end{tabular}
	\begin{tabular}{lclclc}
		\textbf{Rules:}\\
	\MP & $\dfrac{\alpha,\alpha\to\beta}{\beta}$\\
		\end{tabular}
	\end{center}
Let $\KP$ and $\ND_k$ be the following axiom schemata:
	\begin{center}
	\begin{tabular}{ll}
       $\KP$ &	$(\neg\alpha\to\beta\lor\gamma)\to(\neg\alpha\to\beta)\lor(\neg\alpha\to\gamma)$\\
       $\ND_k$ &  $(\neg \alpha \to \bigvee_{1\leq i\leq k}
\neg \beta_i)\to \bigvee_{1\leq i\leq k}(\neg \alpha \to \neg \beta_i)$
 		\end{tabular}
	\end{center}

\begin{theorem}[Axiomatizations of $\InqL$ \cite{Ciardelli2011}] 
\label{thm.axiom}
$\SINTDN+\KP$ and $\SINTDN+\{\ND_k \mid \text{ $k\in \mathbb{N}$}\}$ are both sound and complete for $\InqL$.
\end{theorem}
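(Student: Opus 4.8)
The plan is to prove soundness and completeness separately. Soundness is a routine semantic verification; the substance of the theorem is the completeness direction, which I would obtain through a \emph{normal form} argument based on \emph{resolutions}. For soundness one checks that each axiom is supported by every state of every model and that $\MP$ preserves this. The schema $\INTU$ is sound because, after fixing a model $\M$, the clauses of Definition~\ref{support} are exactly the Kripke clauses for $\IPL$ over the poset of states of $\M$ ordered by reverse inclusion, with $s$ forcing a proposition letter $p$ iff $\M,s\Vdash p$; since this forcing is persistent, $\InqL$ contains every $\IPL$-theorem. For $\DN$ one uses that atoms are \emph{flat}: $\M,s\Vdash p$ iff $\{w\}\Vdash p$ for all $w\in s$, which gives $\M,s\Vdash\neg\neg p\to p$ at once. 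For $\KP$ and $\ND_k$ the key point is that any formula $\neg\gamma$ is flat, so a state $s$ has a largest substate $s^{\ast}\subseteq s$ with $s^{\ast}\Vdash\neg\gamma$, and $\M,s\Vdash\neg\gamma\to\psi$ iff $\M,s^{\ast}\Vdash\psi$; combined with the support clause for $\lor$ this yields the validity of both schemata (in fact as biconditionals).

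For completeness, assign to each $\PL$-formula $\alpha$ its finite set of resolutions $\mathcal{R}(\alpha)$: $\mathcal{R}(p)=\{p\}$, $\mathcal{R}(\bot)=\{\bot\}$, $\mathcal{R}(\alpha\land\beta)=\{a\land b\mid a\in\mathcal{R}(\alpha),\,b\in\mathcal{R}(\beta)\}$, $\mathcal{R}(\alpha\lor\beta)=\mathcal{R}(\alpha)\cup\mathcal{R}(\beta)$, and $\mathcal{R}(\alpha\to\beta)=\{\bigwedge_{a\in\mathcal{R}(\alpha)}(a\to g(a))\mid g\colon\mathcal{R}(\alpha)\to\mathcal{R}(\beta)\}$; note that every resolution is $\lor$-free and $\mathcal{R}(\alpha)$ is always nonempty. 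The first step is \emph{stability of resolutions}: by induction on $\lor$-free formulas, $\SINTDN\vdash\neg\neg r\to r$ for every resolution $r$, the atomic case being exactly $\DN$ and the $\land,\to$ cases intuitionistic. The second, central step is the \emph{provable normal form} $\SINTDN+\KP\vdash\alpha\lra\bigvee\mathcal{R}(\alpha)$, proved by induction on $\alpha$: the cases $p,\bot,\land,\lor$ are intuitionistic, and the implication case reduces, via the induction hypothesis and the intuitionistic laws $(\varphi\lor\psi)\to\chi\lra(\varphi\to\chi)\land(\psi\to\chi)$ and finite distributivity of $\land$ over $\lor$, to proving $a\to\bigvee_{b}b\lra\bigvee_{b}(a\to b)$ for resolutions $a$ and $b$. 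The nontrivial direction is obtained by iterating a ``stable $\KP$'' instance $(a\to\varphi\lor\psi)\to(a\to\varphi)\lor(a\to\psi)$, which follows from $\KP$ applied to $\neg(\neg a)$ together with the stability of $a$. The very same normal form is provable from $\SINTDN+\{\ND_k\mid k\in\mathbb{N}\}$: by stability each $a$ and each $b$ equals its double negation provably, so $a\to\bigvee_{b}b$ can be rewritten into an instance of $\ND_k$ with $k=|\mathcal{R}(\beta)|$ and then rewritten back.

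The remaining steps are short. From soundness and the provable normal form, $\M,s\Vdash\alpha$ iff $\M,s\Vdash r$ for some $r\in\mathcal{R}(\alpha)$; since resolutions are $\lor$-free hence flat, $\M,W_\M\Vdash r$ holds for a full model $\M$ iff $r$ is classically valid, so Proposition~\ref{prop.full} gives $\alpha\in\InqL$ iff some $r\in\mathcal{R}(\alpha)$ is classically valid. Moreover, if a $\lor$-free formula $r$ is classically valid then $\IPL\vdash\neg\neg r$ by Glivenko's theorem, and $\SINTDN\vdash\neg\neg r\to r$ by stability, so $\SINTDN\vdash r$. Combining: if $\alpha\in\InqL$ then some $r\in\mathcal{R}(\alpha)$ is classically valid, hence $\SINTDN\vdash r$, hence $\SINTDN\vdash\bigvee\mathcal{R}(\alpha)$ by $\lor$-introduction, hence $\SINTDN+\KP\vdash\alpha$ (and likewise with $\{\ND_k\}$) by the normal form; with soundness this proves the theorem.

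I expect the main obstacle to be the \emph{provable} normal form: although its semantic analogue is easy, carrying the induction out syntactically needs precisely $\KP$ (respectively the family $\ND_k$) in the implication case, and getting the instance into the required shape forces one to use the stability of resolutions — this is exactly the point where the special behaviour of atoms (the axiom $\DN$) is essential and where the argument would break down for full intuitionistic logic. The Glivenko step and the two routine inductions (persistence/flatness and stability) are standard by comparison.
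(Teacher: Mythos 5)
Your proposal is essentially correct, but note that the paper itself offers no proof of this statement: Theorem \ref{thm.axiom} is quoted as a background result from Ciardelli and Roelofsen (2011), so there is no internal argument to compare against. What you reconstruct is, in substance, the completeness proof from that cited literature: soundness by checking that negations (and atoms) are flat, so that a state $s$ has a largest substate supporting $\neg\gamma$ and the $\KP$/$\ND_k$ schemata hold even as biconditionals; completeness via the provable normal form $\alpha\lra\bigvee\RR(\alpha)$, stability $\neg\neg r\to r$ of $\lor$-free resolutions (the atomic case being exactly $\DN$), reduction of validity on the full model to classical validity of a single resolution, and Glivenko to recover derivability in $\SINTDN$. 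The original 2011 proof uses disjunctions of \emph{negations} as the normal form, while your $\lor$-free resolutions follow the later formulation (as in Definition 8 of Ciardelli 2018, which this paper adopts in Section \ref{sec.related work}); the two are interchangeable here precisely because stability makes every resolution provably equivalent to its double negation, which is also the step you need to massage $\KP$ and $\ND_k$ into the ``stable antecedent'' instances. Your identification of where $\DN$ is indispensable, and why the induction would break for full intuitionistic logic, is accurate. Two harmless glosses worth tightening if you write this up: the poset used for the $\INTU$ soundness argument should consist of the \emph{nonempty} states (the empty state supports $\bot$, so it is not literally a node of an intuitionistic Kripke model, though it causes no harm since it supports everything), and the flatness of $\lor$-free formulas used in the final step deserves its own one-line induction (implications inherit flatness from their consequents).
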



The following definitions are taken from \cite{Ciardelli2011} adapted with a given model $\M$ as in \cite{Ciardelli16phd}.\footnote{Following the new notion in \cite{Ciardelli16phd}, we call  \textit{possibilities} (Definition 2.9 in \cite{Ciardelli2011}) as \textit{alternatives}, and call \textit{assertions}  (Definition 2.14 in \cite{Ciardelli2011}) as \textit{statements}.
Note that in Definition 2.14 in \cite{Ciardelli2011}, questions and statements (assertions) are defined absolutely with respect to a full model with the trivial state. Here we generalized it to a relative notion as in the cases of inquisitiveness and informativeness. } 
These characterize some important concepts in inquisitive semantics. Note that in the earlier literature (c.f.\cite{Ciardelli2011}),  the definition of proposition was the set of alternatives. But the latest definition in \cite{draftCiardelli} has been modified as the downward closure of the old one. We adopt the latest definition.

\begin{definition}[Alternatives  and propositions]
 Let $\alpha$ be a $\LPL$ formula and let $\M$ be a model. 
 \begin{itemize}
 \label{def.possibility}
\item An \emph{alternative} for $\alpha$ in $\M$ is a maximal state $s$ in $\M$ supporting $\alpha$;
\item The \emph{proposition} expressed by $\alpha$ in $\M$ (call it $[\alpha]_\M$), is the set of states in $\M$ that supports $\alpha$. That is, $[\alpha]_\M=\{s\in W_{\M}\mid s\vDash\alpha\}$.
 \end{itemize}
\end{definition} 
\begin{definition}[Inquisitiveness and informativeness]\, \label{def.inqandinfo}
Let $\alpha$ be a $\LPL$ formula and let $\M$ be a  model. 
 \begin{itemize}
     \item $\alpha$ is \emph{inquisitive} in $\M$ if $[\alpha]_\M$ contains at least two alternatives;
\item $\alpha$ is \emph{informative} in $\M$ if there is some world in $\M$ that is not included in
any alternative for $\alpha$ in $\M$.
 \end{itemize}
\end{definition}


Furthermore, we can define the relative notions of \textit{statements} and \textit{questions}.
\begin{definition}[Questions and statements]\,
Given a model $\M$: 
\begin{itemize}
 \item $\alpha$ is a \emph{question} in $\M$ iff it is not informative in $\M$;
\item $\alpha$ is a \emph{statement} in $\M$ iff it is not inquisitive in $\M$.
\end{itemize}
\end{definition}

We will come back to these in Section \ref{sec.understanding} from our epistemic perspective. Before that, we shall introduce our framework of the logic of knowing how to interpret inquisitive logic epistemically.

\section{Inquisitive logic as a logic of knowing how}\label{sec.khlogic}
In this section, we give a formal epistemic interpretation of inquisitive logic by using an epistemic logic of knowing how. 

\subsection{Language and models}

We first introduce our dynamic epistemic language of knowing how, where on top of $\PL$ we add three modalities of know-that, know-how, and updates.  

\begin{definition}[Language $\LInqKh$]
Given a countable set of proposition letters $\Prop$, the \emph{Dynamic Epistemic Language of Knowing How} ($\LInqKh^\Prop$) is defined as follows:

$$\varphi::= p\mid \bot\mid (\varphi\land\varphi)\mid (\varphi\lor\varphi)\mid (\varphi\to\varphi)\mid \K\varphi\mid \Kh\alpha \mid \Box\varphi $$
where  $p\in \Prop$ and $\alpha\in\LPP$. For abbreviations, we write {$\hK$} for $\neg\K\neg$, {$\hKh$} for $\neg\Kh\neg$ and write {$\Diamond$} for $\neg\Box\neg$. We denote the $\Kh$-free fragment as $\LDEL^\Prop$, and the $\Box$-free fragment of $\LDEL^\Prop$ as $\LEL^\Prop$.
\end{definition}
Again, we often omit the $\Prop$ from $\LInqKh^\Prop$ when $\Prop$ is fixed in the context. Intuitively, $\K \phi$ expresses that ``the agent \textit{knows that} $\phi$'', $\Kh\alpha$ says that ``the agent \textit{knows how} to resolve $\alpha$'' or simply ``the agent \textit{knows how}  $\alpha$ is true'',
and $\Box\phi$ says that ``$\phi$ holds, no matter what further information is given''. Note that $\Kh$ only takes $\PL$-formulas $\alpha$ whereas $\K$ and $\Box$ can be combined with any  $\LInqKh$-formulas $\phi$. Therefore we can express $\K\neg \Kh\alpha$ but not $\Kh\K\alpha$ in $\LInqKh$. 

\medskip


$\LInqKh$ will be interpreted on standard \textit{single-agent epistemic models} where the (implicit) epistemic relation is the \textit{total relation}. Technically speaking, \textbf{such  models are exactly the information models} as we defined in Definition \ref{imodel}, when we omit the epistemic relation since it is always  total.\footnote{Nevertheless, in a multi-agent setting, explicit epistemic relations are necessary. We leave it for a future occasion. } For this reason, in the sequel of the paper, we will also call information models \textit{epistemic models}.  

For \textbf{notational convenience}, we also write $w\in \M$ in case that $w\in W_{\M}$, $\M'\subseteq \M$ in case that $\M'$ is a submodel of $\M$. If $w\in \M'\subseteq \M$ then we write $(\M',w)\subseteq (\M,w)$. 

Although the models are simply \SFive\ epistemic models, in order to reflect the non-classical features of $\InqL$, we define the semantics for $\Kh$ via the resolutions, based on the idea that knowing how $\alpha$ is true means knowing a particular resolution for $\alpha$, in line with the BHK-interpretation of the intuitionistic connectives. We first define the resolution space for each formula below in Definition \ref{def.rs}. The actual resolutions on each world for each formula, to be defined in Definition \ref{def.resolution}, will be subsets of the resolution space. 
\begin{definition}[Resolution space]
\label{def.rs}
$\S$ is a function assigning each $\alpha$ its set of potential resolutions:
$$\begin{aligned}
\S(p)&=\{p\},\text{ for }p\in\Prop\\
\S(\bot)&=\{\bot\}\\
\S(\alpha\lor\beta)&=(\S(\alpha)\times\{0\})\cup(\S(\beta)\times\{1\})\\
\S(\alpha\land\beta)&=\S(\alpha)\times\S(\beta)\\
\S(\alpha\to\beta)&=\S(\beta)^{\S(\alpha)}\end{aligned}$$
Let $\SS=\bigcup_{\alpha\in\LPL}\S(\alpha)$.
\end{definition}
Our definition is similar to the definition of realizations in \cite[Part 4]{miglioli1989}, which was proposed not in the context of inquisitive logic. The definition of resolution space reflects the intuition that for each $\alpha$, there is a set of possible ways to \textit{resolve it as an issue} or say to \textit{make it true}. Intuitively, the set of potential resolutions of a disjunction is the disjoint union of the potential resolutions of each disjunct: to make a disjunction true you need to explicitly make one of the disjuncts true. The resolution space of a conjunction is the Cartesian product of the resolution space of each conjunct: to make a conjunction true, you need to make both conjuncts true. The resolution space for an implication, is the set of functions from the resolution of the antecedent to the resolution space of the consequent: to make an implication true, you need to have a way to transform any resolution of the antecedent to some resolution of the consequent.


Note that the potential resolutions for atomic propositions are assumed to be singletons, which reflects the underlying assumption in inquisitive semantics that atomic propositions do not bring inquisitiveness themselves. In other words, we always \textit{know how} to resolve atomic propositions when possible. This is \textit{the} most fundamental difference between $\InqL$ and other intermediate logics such as Medvedev logic. It will become more clear when we discuss the axioms later. Technically speaking, the possible resolution of each atomic proposition $p$ is not necessary $p$ itself, as long as it is unique, and it will become more clear when the semantics is introduced. 

For technical convenience, in the line of the resolution space for atomic propositions, the resolution space of $\bot$ is defined as $\{\bot\}$, but as we will see later $\bot$ does not have any real resolution.


It is obvious that the resolution space for each $\alpha\in \PL$ are non-empty and finite :
\begin{proposition}\label{prop.non-emptyS}
For any $\alpha\in$ $\LPL$, $\S(\alpha)\neq\varnothing$ and it is finite.
\end{proposition}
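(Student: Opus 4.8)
The plan is to prove Proposition \ref{prop.non-emptyS} by straightforward structural induction on $\alpha \in \LPL$, following exactly the clauses of Definition \ref{def.rs}. The statement bundles together two facts — non-emptiness and finiteness — and both can be handled in a single induction, so I would not split them into separate lemmas.

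First I would set up the induction. For the base cases $\alpha = p$ and $\alpha = \bot$, Definition \ref{def.rs} gives $\S(p) = \{p\}$ and $\S(\bot) = \{\bot\}$, each a singleton, hence non-empty and finite; nothing more is needed. For the inductive step, I would assume as the induction hypothesis that $\S(\beta)$ and $\S(\gamma)$ are both non-empty and finite whenever $\beta, \gamma$ are proper subformulas, and then treat each connective in turn. For $\alpha = \beta \lor \gamma$, we have $\S(\beta \lor \gamma) = (\S(\beta) \times \{0\}) \cup (\S(\gamma) \times \{1\})$; this is a union of two finite sets (each a product of a finite set with a singleton), so it is finite, and it is non-empty since, e.g., $\S(\beta) \times \{0\}$ is non-empty by the induction hypothesis. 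For $\alpha = \beta \land \gamma$, we have $\S(\beta \land \gamma) = \S(\beta) \times \S(\gamma)$, a Cartesian product of two finite non-empty sets, hence finite and non-empty. For $\alpha = \beta \to \gamma$, we have $\S(\beta \to \gamma) = \S(\gamma)^{\S(\beta)}$, the set of all functions from $\S(\beta)$ to $\S(\gamma)$; since both index and codomain are finite, this set is finite with cardinality $|\S(\gamma)|^{|\S(\beta)|}$, and it is non-empty because $\S(\gamma)$ is non-empty (so at least one function exists — and indeed since $\S(\beta)$ may be taken non-empty, any constant function works; even if one wished to allow $\S(\beta)=\emptyset$, the empty function would still witness non-emptiness, but here that case does not arise).

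There is essentially no obstacle here: the only point worth a moment's care is the implication case, where one must remember that a function set $Y^X$ is non-empty as long as $Y \neq \emptyset$ (regardless of $X$), and that it is finite whenever both $X$ and $Y$ are finite. Since the induction hypothesis supplies exactly $\S(\beta) \neq \emptyset$, $\S(\gamma) \neq \emptyset$, and both finite, the conclusion follows immediately. I would conclude by noting that the induction is complete and both conjuncts of the proposition hold for every $\alpha \in \LPL$.
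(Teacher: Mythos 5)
Your induction is correct and is exactly the routine argument the paper has in mind — the paper simply asserts the proposition as obvious without spelling out the proof, and your case analysis (singletons at the base, finite unions/products/function spaces at the inductive step, with non-emptiness of $\S(\gamma)^{\S(\beta)}$ secured by $\S(\gamma)\neq\varnothing$) fills it in faithfully.
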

Now given a model, we can generate the \textit{(actual)  resolutions} of each $\alpha$ on each world.  
\begin{definition}[Resolution]
\label{def.resolution}
Given a model $\M$, the resolution function $\R: W\times\LPL\to \SS$ is defined as follows: 
 $$\begin{aligned}
\R(w,\bot)&=\varnothing\\
\R(w,p)&=\left\{\begin{array}{cl}
  \{p\}   & \text{ if $p\in V_\M(w)$}  \\
 \emptyset   & \text{ otherwise}
\end{array}\right.\\
\R(w,\alpha\lor\beta)&=(\R(w,\alpha)\times\{0\})\cup(\R(w,\beta)\times\{1\})\\
\R(w,\alpha\land\beta)&=\R(w,\alpha)\times\R(w,\beta)\\
\R(w,\alpha\to\beta)&=
\{f\in\S(\beta)^{\S(\alpha)}\mid f[\R(w,\alpha)]\subseteq\R(w,\beta)\}\end{aligned}$$
For $U\subseteq W$, we write $\R(U, \alpha)$ for $\bigcap_{w\in U}\R(w, \alpha).$ When $U=W_\M$ we also write $\R(\M, \alpha)$ for $\R(U, \alpha).$
\end{definition}

We call $\R(w,\alpha)$ the set of resolutions for an issue $\alpha$ on a given world $w$. It is clear that $R(w,\alpha)\subseteq \S(\alpha).$

Note that although the resolution space of $\bot$ is non-empty for technical convenience, it cannot have any actual resolution on any world. An atomic proposition $p$ can have the  resolution $p$ only when it is true on $w$. The resolutions for an implication $\alpha\to \beta$ on a possible world are the functions in the resolution space of $\alpha\to \beta$ mapping a resolution of $\alpha$ to a  resolution of $\beta$ on the same world in line with the BHK-interpretation.  

Recall that $\neg \alpha$ is the abbreviation of $\alpha\to\bot$. Since negation plays an important role in intermediate logics, we have the following observation, which is useful for later discussions. 
\begin{proposition}
\label{prop.negation} For any $\M, w$, any $\alpha$,  
$\R(w, \neg \alpha)$ is either $\emptyset$ or a fixed singleton set independent from $w$, and $\R(w, \neg \alpha)=\emptyset$ iff $R(w, \alpha)\not=\emptyset$.
\end{proposition}
\begin{proof} 
By the definitions of $\R$ and $\S$:
\begin{align*}
&\R(w, \alpha\to\bot)\\
=&\{f\in\S(\bot)^{\S(\alpha)}\mid f[\R(w, \alpha)]\subseteq\R(w,\bot)\}\\
=&\{f\in\{\bot\}^{\S(\alpha)}\mid f[\R(w, \alpha)]\subseteq\emptyset\}\\
=&\{f\in \{\bot\}^{\S(\alpha)}\mid f[\R(w, \alpha)]=\emptyset\}\\
=& \left\{\begin{array}{cl}
  \emptyset   & \text{ if $\R(w, \alpha)\not=\emptyset$}  \\
 \{f^\alpha_\bot\}   & \text{ if $\R(w, \alpha)=\emptyset$} 
 \end{array}\right.
\end{align*}
where $f^\alpha_\bot$ is the constant function such that $f^\alpha_\bot(x)=\bot$ for any $x\in \S(\alpha)$. Note that $f^\alpha_\bot$ only depends on $\alpha$ and it is independent from $w$.
\end{proof}

It is also important that $\R(w, \alpha)$ only depends on the valuation on $w$ itself but not on other worlds. 
\begin{proposition}\label{prop.rind}
For any $\M,w$ and $\N,v$, if $V_\M(w)=V_\N(v)$ then $\R(w, \alpha)=\R(v,\alpha)$ for all $\alpha\in\PL$. 
\end{proposition}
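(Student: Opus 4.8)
The statement is that $\R(w,\alpha)$ depends only on the valuation at $w$, i.e. if $V_\M(w)=V_\N(v)$ then $\R(w,\alpha)=\R(v,\alpha)$ for all $\alpha\in\PL$. The natural approach is a straightforward induction on the structure of $\alpha$, following the clauses of Definition \ref{def.resolution}. The key observation making this work is that the resolution \emph{space} $\S(\alpha)$ is defined purely syntactically, independent of any model, so at every step the ambient sets $\S(\alpha),\S(\beta)$ over which the actual resolutions are carved out are literally the same for $\M,w$ and $\N,v$; only the ``cut'' made by $\R(w,-)$ versus $\R(v,-)$ could differ, and the induction shows it does not.

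First I would fix $\M,w$ and $\N,v$ with $V_\M(w)=V_\N(v)$ and proceed by induction on $\alpha$. For the base cases: $\R(w,\bot)=\emptyset=\R(v,\bot)$ by definition; and for $\alpha=p$, since $V_\M(w)=V_\N(v)$ we have $p\in V_\M(w)\iff p\in V_\N(v)$, so $\R(w,p)$ and $\R(v,p)$ are both $\{p\}$ or both $\emptyset$. For the inductive steps, in each case I invoke the induction hypothesis $\R(w,\beta)=\R(v,\beta)$ and $\R(w,\gamma)=\R(v,\gamma)$ (writing $\alpha$ as built from $\beta,\gamma$) and observe that the clause of Definition \ref{def.resolution} computes $\R(\cdot,\alpha)$ as a function of $\R(\cdot,\beta)$ and $\R(\cdot,\gamma)$ only. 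Concretely: for disjunction, $\R(w,\beta\lor\gamma)=(\R(w,\beta)\times\{0\})\cup(\R(w,\gamma)\times\{1\})$, which is determined by $\R(w,\beta),\R(w,\gamma)$; for conjunction, $\R(w,\beta\land\gamma)=\R(w,\beta)\times\R(w,\gamma)$, likewise; for implication, $\R(w,\beta\to\gamma)=\{f\in\S(\gamma)^{\S(\beta)}\mid f[\R(w,\beta)]\subseteq\R(w,\gamma)\}$, and here I note explicitly that $\S(\beta)$ and $\S(\gamma)$ are model-independent, so the only data entering the right-hand side beyond fixed syntactic sets are $\R(w,\beta)$ and $\R(w,\gamma)$, which equal $\R(v,\beta)$ and $\R(v,\gamma)$ by the induction hypothesis. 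In every case the two computations agree, giving $\R(w,\alpha)=\R(v,\alpha)$.

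I do not expect any genuine obstacle here — this is a routine structural induction. The only point worth stating carefully (and the thing a careless proof might gloss over) is the model-independence of $\S$: one must be sure that in the implication clause the set $\S(\gamma)^{\S(\beta)}$ of candidate functions is the same in both scenarios, which is immediate from Definition \ref{def.rs} since $\S$ takes no model as argument. With that noted, the induction closes cleanly.
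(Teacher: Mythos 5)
Your proof is correct; the paper in fact states Proposition \ref{prop.rind} without proof, and the argument it implicitly relies on is exactly your routine structural induction on $\alpha$, using the model-independence of $\S$ in the implication clause. Nothing further is needed.
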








\subsection{Semantics}

Given the definition of resolutions, we can define the satisfaction relation of $\LInqKh$ on possible worlds in epistemic models. Note that the connectives outside the scope of $\Kh$ are classical and the semantics of $\K$ is standard as in epistemic logic. The semantics of $\Kh\alpha$ on a world $w$ is the formalization of the idea that the agent knows how to resolve $\alpha$ iff it knows a particular resolution of $\alpha$. The semantics of the dynamic modality  $\Box$ is based on taking submodels as in a version of arbitrary announcement logic \cite{balbiani2008}, representing informational updates in terms of eliminating possibilities. 
\begin{definition}[Semantics]
 For any $\varphi\in\LInqKh$ and a pointed model $\M,w$ where $\M=\langle W,\V\rangle$, the satisfaction relation is defined as below:
$$\begin{array}{|lcl|}
\hline
\M,w\nvDash\bot &&  \\
\M,w\vDash p &\iff& p\in \V(w)\\
\M,w\vDash (\varphi\lor\psi)&\iff& \M,w\vDash \varphi \text{ or }\M,w\vDash \psi\\
\M,w\vDash (\varphi\land\psi)&\iff& \M,w\vDash \varphi \text{ and }\M,w\vDash \psi\\
\M,w\vDash (\varphi\to\psi)&\iff& \M,w\vDash \varphi \text{ implies } \M,w\vDash \psi\\
\M,w\vDash \Box\varphi&\iff& \text{ for any } (\M',w)\subseteq (\M, w), \M',w\vDash\varphi\\
\M,w\vDash \K\varphi&\iff& \text{ for any } v\in \M,  \M,v\vDash\varphi\\
\M,w\vDash \Kh\alpha&\iff& \text{ there exists a } x\in \S(\alpha) \text{ s.t. for any } v\in \M, x\in\R(v,\alpha)\\
\hline
\end{array}$$
We say a formula is \emph{valid on $\M$} ($\M\vDash \phi$) if $\M,w\vDash\phi$ for all $w\in \M$. 
We say a set of $\LInqKh$-formula $\Gamma$ \emph{entails} another formula
$\varphi$ $(\Gamma\vDash\varphi)$, if for any pointed model $\M,w$, $\M,w\vDash\Gamma$ implies $\M,w\vDash\varphi$. We say $\varphi$ is \emph{valid} ($\vDash\phi$) if $\emptyset \vDash \varphi.$ A formula schema is valid if all its instances are valid. 
\end{definition}
It is not hard to see that the structure of the truth condition of $\Kh$ is in terms of the bundle $\exists x \K$ as in other know-wh logics \cite{Wang2018,Wang17d}.  Recall that $\R(\M, \alpha)=(\bigcap_{v\in\M}\R(v,\alpha))$ (cf.\ Definition \ref{def.resolution}). The semantics of $\Kh$ can be then  reformulated as below for notational brevity.
$$\begin{array}{|lcl|}
\hline
\M,w\vDash \Kh\alpha&\iff& \R(\M, \alpha)\not=\emptyset\\
\hline
\end{array}$$






Note that the truth conditions of $\K$ and $\Kh$ do not depend on the designated world, therefore we have: 
\begin{proposition}
\label{prop.KHK} For any model $\M,w$: 
\begin{itemize}
    \item $\M,w\vDash\Kh\alpha\iff \M\vDash \Kh\alpha$,  and $\M,w\vDash\neg \Kh\alpha\iff \M\vDash \neg \Kh\alpha$;
    \item $\M,w\vDash \K\alpha\iff \M\vDash \K\alpha$, and  $\M,w\vDash\neg \K\alpha\iff \M\vDash \neg\K\alpha$.
\end{itemize}
    As a consequence, the introspection axioms $\Kh\alpha\lra \K\Kh\alpha$ and $\neg \Kh\alpha\lra \K \neg\Kh \alpha$ are valid.
\end{proposition}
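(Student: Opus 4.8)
The plan is to first establish the two bulleted equivalences, and then derive the introspection axioms as an immediate corollary. Both bullets follow the same pattern: the truth clauses for $\K$ and $\Kh$ in Definition (Semantics) do not mention the designated world $w$ at all — the clause for $\K\varphi$ quantifies over all $v\in\M$, and the clause for $\Kh\alpha$ (in its reformulated form $\M,w\vDash\Kh\alpha\iff\R(\M,\alpha)\neq\emptyset$) refers only to $\M$. So the core observation I would make precise is: \emph{for $\psi\in\{\K\varphi,\Kh\alpha\}$, the condition $\M,w\vDash\psi$ is independent of $w$}; i.e.\ if $w,w'\in W_\M$ then $\M,w\vDash\psi$ iff $\M,w'\vDash\psi$. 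Given this, $\M,w\vDash\Kh\alpha$ for some $w$ is equivalent to $\M,w\vDash\Kh\alpha$ for all $w$, which is by definition $\M\vDash\Kh\alpha$; and the same for $\K$. The negated versions follow because $\neg$ is interpreted classically at a world, so $\M,w\vDash\neg\Kh\alpha$ iff $\M,w\nvDash\Kh\alpha$ iff (by the first part) $\M\nvDash\Kh\alpha$, and $\M\nvDash\Kh\alpha$ means $\M,v\nvDash\Kh\alpha$ for some $v$, which by world-independence is the same as $\M,v\nvDash\Kh\alpha$ for all $v$, i.e.\ $\M\vDash\neg\Kh\alpha$. The same reasoning applies verbatim with $\K$ in place of $\Kh$ (note: the statement writes $\K\alpha$ but the argument does not use that $\alpha$ is a $\PL$-formula, so it holds for $\K\varphi$ with arbitrary $\varphi$ as well).

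For the corollary, I would argue semantic validity of $\Kh\alpha\lra\K\Kh\alpha$ directly. Fix any $\M,w$. Using the first bullet, $\M,w\vDash\Kh\alpha$ iff $\M\vDash\Kh\alpha$, i.e.\ iff $\M,v\vDash\Kh\alpha$ for all $v\in\M$; but that last condition is exactly the truth clause for $\M,w\vDash\K\Kh\alpha$. Hence the two sides of the biconditional hold at $w$ under exactly the same circumstances, so $\M,w\vDash\Kh\alpha\lra\K\Kh\alpha$; since $\M,w$ was arbitrary, the schema is valid. For $\neg\Kh\alpha\lra\K\neg\Kh\alpha$: $\M,w\vDash\neg\Kh\alpha$ iff $\M\vDash\neg\Kh\alpha$ (second half of the first bullet) iff $\M,v\vDash\neg\Kh\alpha$ for all $v$ iff $\M,w\vDash\K\neg\Kh\alpha$. (The analogous $\K$-introspection schemata would follow identically from the second bullet, though the statement only asserts the $\Kh$ versions.)

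There is essentially no obstacle here — the statement is a direct bookkeeping consequence of the shape of the semantic clauses, and the only thing to be careful about is keeping the quantifier manipulations honest: "$\M\vDash\Kh\alpha$" unfolds to "for all $w$, $\M,w\vDash\Kh\alpha$", and one must note that by world-independence this is equivalent to "for some $w$, $\M,w\vDash\Kh\alpha$", since $W_\M$ is non-empty (guaranteed by Definition \ref{imodel}). That non-emptiness is the one hypothesis doing quiet work: it is what lets us pass between "for all worlds" and "for some world" when the property is world-independent. Everything else is unwinding definitions.
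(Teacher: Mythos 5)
Your proposal is correct and takes essentially the same route as the paper, which proves this proposition simply by the observation (stated just before it) that the truth conditions of $\K$ and $\Kh$ do not mention the designated world, so truth at a world coincides with validity on the model, and the introspection schemata then follow by unfolding the clause for $\K$. Your extra care about non-emptiness of $W_\M$ and the negated cases is fine bookkeeping but does not change the argument.
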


It is clear that the above semantics is simply classical for $\alpha\in \PL$. In particular, $\{\alpha\in \PL\mid\  \vDash \alpha\}$ is classical propositional logic ($\CPL$).  However, let $\InqKhL$ be $\{\alpha\mid \ \vDash\Kh\alpha\}\subseteq \PL$, we will show that $\InqKhL=\InqL$. Before that, to understand the semantics of $\Kh$ better, we first show that the classical semantics of $\alpha$ can be viewed from the perspective of resolutions: \textit{resolvability equals truth}. 
Note that for atomic formulas $p$, $\M,w\vDash p \iff R(w,p)\not=\emptyset$ is obviously true, however we need to generalize it to any formula $\alpha\in\LPL$. 
\begin{proposition}
\label{prop.nept}
For any $\alpha\in\LPL$ and pointed model $\M,w$. $\M,w\vDash \alpha \iff \R(w,\alpha)\neq\varnothing, \text{ where }\alpha\in\LPL$.
\end{proposition}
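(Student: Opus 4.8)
The plan is to prove Proposition~\ref{prop.nept} by structural induction on $\alpha\in\LPL$, exactly mirroring the recursive clauses in Definition~\ref{def.resolution} against the classical satisfaction clauses in the semantics. The base cases are immediate: for $p$, the equivalence $\M,w\vDash p\iff p\in\V(w)\iff \R(w,p)=\{p\}\neq\emptyset$ is built into the definition of $\R$; and for $\bot$, both sides fail since $\M,w\nvDash\bot$ and $\R(w,\bot)=\emptyset$ by definition. So the content is in the three inductive steps for $\land$, $\lor$, and $\to$.

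For conjunction, I would use $\R(w,\alpha\land\beta)=\R(w,\alpha)\times\R(w,\beta)$, and the elementary set-theoretic fact that a Cartesian product $A\times B$ is nonempty iff both $A$ and $B$ are nonempty; combined with the induction hypothesis this gives $\R(w,\alpha\land\beta)\neq\emptyset \iff \R(w,\alpha)\neq\emptyset \text{ and } \R(w,\beta)\neq\emptyset \iff \M,w\vDash\alpha \text{ and }\M,w\vDash\beta \iff \M,w\vDash\alpha\land\beta$. Disjunction is analogous: $\R(w,\alpha\lor\beta)=(\R(w,\alpha)\times\{0\})\cup(\R(w,\beta)\times\{1\})$ is nonempty iff at least one of the two pieces is nonempty, i.e.\ iff $\R(w,\alpha)\neq\emptyset$ or $\R(w,\beta)\neq\emptyset$, which by the induction hypothesis matches the classical truth condition for $\alpha\lor\beta$. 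One should note here that $\{0\}$ and $\{1\}$ are singletons, so tagging does not destroy nonemptiness in either direction.

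The implication case is where the real work sits, and I expect it to be the main obstacle. We have $\R(w,\alpha\to\beta)=\{f\in\S(\beta)^{\S(\alpha)}\mid f[\R(w,\alpha)]\subseteq\R(w,\beta)\}$. We want to show this set is nonempty iff $\M,w\vDash\alpha$ implies $\M,w\vDash\beta$, i.e.\ (by the induction hypothesis) iff $\R(w,\alpha)\neq\emptyset$ implies $\R(w,\beta)\neq\emptyset$. Split into cases. If $\R(w,\alpha)=\emptyset$: then every $f\in\S(\beta)^{\S(\alpha)}$ vacuously satisfies $f[\emptyset]=\emptyset\subseteq\R(w,\beta)$, so $\R(w,\alpha\to\beta)=\S(\beta)^{\S(\alpha)}$, and this is nonempty because $\S(\beta)\neq\emptyset$ by Proposition~\ref{prop.non-emptyS} (a function space into a nonempty codomain from any domain is nonempty --- including when the domain is itself empty, in which case the unique empty function witnesses it); so both sides hold. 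If $\R(w,\alpha)\neq\emptyset$ and $\R(w,\beta)\neq\emptyset$: pick any $y_0\in\R(w,\beta)$ and take the constant function $f\equiv y_0$; then $f[\R(w,\alpha)]=\{y_0\}\subseteq\R(w,\beta)$, so $f\in\R(w,\alpha\to\beta)$, which is therefore nonempty; and the classical condition holds since the antecedent and consequent are both true. If $\R(w,\alpha)\neq\emptyset$ and $\R(w,\beta)=\emptyset$: then for any candidate $f$, $f[\R(w,\alpha)]$ is a nonempty subset of $\S(\beta)$ (it contains $f(x)$ for some $x\in\R(w,\alpha)$), so it cannot be $\subseteq\R(w,\beta)=\emptyset$; hence $\R(w,\alpha\to\beta)=\emptyset$, matching the classical side where the antecedent is true but the consequent false. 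The one subtlety worth stating carefully is the vacuous-quantification bookkeeping in the $\R(w,\alpha)=\emptyset$ subcase --- making explicit that $\S(\beta)^{\S(\alpha)}$ is nonempty there (in particular $\S(\beta)^{\emptyset}=\{\emptyset\}$ when $\S(\alpha)=\emptyset$, though Proposition~\ref{prop.non-emptyS} tells us $\S(\alpha)$ is in fact always nonempty anyway). This completes the induction.
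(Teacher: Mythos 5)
Your proof is correct and follows essentially the same route as the paper: structural induction on $\alpha$, matching each clause of Definition~\ref{def.resolution} against the classical truth conditions, with the implication case handled via the nonemptiness of $\S(\beta)^{\S(\alpha)}$ (Proposition~\ref{prop.non-emptyS}). Your explicit three-way case analysis for $\to$ just spells out in more detail what the paper compresses into its ``$f[\R(w,\alpha)]\subseteq\R(w,\beta)$ is possible'' step.
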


\begin{proof}
Induction on the structure of $\alpha$: 
$$\begin{aligned}
\M,w\vDash p &\iff p\in V_\M(w) \iff \R(w,p)=\{p\}\iff\R(w,p)\neq\varnothing \\
\M,w\not\vDash\bot&\iff\R(w,\bot)=\varnothing\\
\M,w\vDash ( \alpha\lor\beta) &\iff \M,w\vDash \alpha \text{ or } \M,w\vDash \beta \iff \R(w,\alpha)\neq\varnothing \text{ or }\R(w,\beta)\neq\varnothing\\&\iff \text{ there exists an } x\in\R(w,\alpha) \text{ or there exists a } y\in\R(w,\beta)\\&\iff\text{ there exists a pair } \langle x,0\rangle \text{ or }\langle y,1\rangle \text{ in } \R(w,\alpha\lor\beta)\\&\iff\R(w,\alpha\lor\beta)\neq\varnothing\\
\M,w\vDash (\alpha\land\beta) &\iff \M,w\vDash \alpha \text{ and } \M,w\vDash \beta \iff \R(w,\alpha)\neq\varnothing \text{ and }\R(w,\beta)\neq\varnothing\\&\iff \text{ there exists an } x\in\R(w,\alpha) \text{ and there exists a } y\in\R(w,\beta)\\&\iff \text{ there exists a pair } \langle x,y\rangle \in\R(w,\alpha\land\beta)\\&\iff\R(w,\alpha\land\beta)\neq\varnothing\\
\M,w\vDash (\alpha\to\beta) &\iff \M,w\vDash \alpha \text{ implies } \M,w\vDash \beta \iff \R(w,\alpha)\neq\varnothing \text{ implies } \R(w,\beta)\neq\varnothing\\&\iff\S(\beta)^{\S(\alpha)}\neq\varnothing\text{ and } f[\R(w,\alpha)]\subseteq\R(w,\beta) \text{ is possible}\\&\iff\R(w,\alpha\to\beta)\neq\varnothing
\end{aligned}$$
\end{proof}
For negations, based on Propositions \ref{prop.nept} and \ref{prop.negation}, we have $$\M,w \vDash \neg \alpha \iff \R(w,\alpha)=\emptyset\iff \M,w \nvDash \alpha.$$

Now, based on Proposition \ref{prop.nept}, we have an alternative semantics for $\K\alpha$ for $\alpha \in \LPL$:
 $$\begin{array}{|lcl|}
\hline
\M,w\vDash \K\alpha&\iff& \textbf{ for any } v\in \M, \text{ \textbf{there exists} an } x\in\R(v,\alpha)\\
\hline
\end{array}$$
Compared with the truth condition of $\Kh$, it now becomes clear that the distinction between $\Kh$ and $\K$ is exactly the distinction between the  \textit{de re} and \textit{de dicto} knowledge, i.e., knowing $\alpha$ is resolvable vs. knowing how $\alpha$ is resolved. 
$$\begin{array}{|lcl|}
\hline
\M,w\vDash \Kh\alpha&\iff& \text{ \textbf{there exists} an } x \text{ s.t. \textbf{for any} } v\in \M, x\in\R(v,\alpha)\\
\hline
\end{array}$$
Based on this distinction, $\Kh\alpha$ is clearly stronger than $\K \alpha$: 
\begin{proposition}\label{prop.Kh2K}
 $\Kh \alpha\to \K \alpha$ is valid for all $\alpha\in\PL$. 
\end{proposition}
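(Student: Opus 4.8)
The plan is to prove $\Kh\alpha \to \K\alpha$ is valid by taking an arbitrary pointed model $\M,w$ with $\M,w \vDash \Kh\alpha$ and showing $\M,w \vDash \K\alpha$. By the semantics of $\Kh$, the assumption gives us some $x \in \S(\alpha)$ such that $x \in \R(v,\alpha)$ for every $v \in \M$. In particular, for every $v \in \M$ we have $\R(v,\alpha) \neq \emptyset$, since it contains $x$. By Proposition \ref{prop.nept}, $\R(v,\alpha) \neq \emptyset$ is equivalent to $\M,v \vDash \alpha$, so $\M,v \vDash \alpha$ for all $v \in \M$, which is exactly the truth condition for $\M,w \vDash \K\alpha$.

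Alternatively, and even more directly, I would use the two reformulated semantic clauses displayed just above the statement: $\M,w\vDash\Kh\alpha$ iff there exists an $x$ such that for all $v\in\M$, $x\in\R(v,\alpha)$; and $\M,w\vDash\K\alpha$ iff for all $v\in\M$ there exists an $x$ with $x\in\R(v,\alpha)$. The implication is then just the trivial logical fact that $\exists x\,\forall v\,\varphi(x,v)$ entails $\forall v\,\exists x\,\varphi(x,v)$ — i.e. swapping a $\exists\forall$ prefix to $\forall\exists$ is sound. Concretely, given the witness $x$ from the $\Kh$-clause, the same $x$ serves as the witness required for each $v$ in the $\K$-clause.

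There is essentially no obstacle here: the statement is a direct instance of the de re / de dicto implication, and the only genuine ingredient is Proposition \ref{prop.nept} (used to justify the alternative semantics of $\K\alpha$ for $\alpha\in\PL$), which is already established. The proof is two or three lines. For the write-up I would simply spell out the quantifier-swap argument, citing Proposition \ref{prop.nept} for the equivalence between $\R(v,\alpha)\neq\emptyset$ and $\M,v\vDash\alpha$, and note that since the witness $x$ chosen for $\Kh\alpha$ is uniform over all worlds, it works in particular at each world separately, giving $\K\alpha$.
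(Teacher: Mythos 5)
Your proposal is correct and matches the paper's reasoning: the paper states Proposition \ref{prop.Kh2K} without a separate proof precisely because it follows from the alternative $\forall\exists$-semantics of $\K\alpha$ (obtained via Proposition \ref{prop.nept}) together with the $\exists\forall\Rightarrow\forall\exists$ quantifier swap, exactly as you argue. Nothing is missing.
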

The distinction disappears if we consider the atomic propositions since there can be at most one fixed resolution for each $p\in \PL$. 
\begin{proposition}\label{prop.K2Kh}
 $\Kh p\lra \K p$ is valid for all $p\in\Prop$. 
\end{proposition}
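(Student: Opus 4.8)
The plan is to unfold both directions directly from the semantics, using the fact that for atomic $p$ the resolution space $\S(p) = \{p\}$ is a singleton, so the existential quantifier over resolutions in the truth condition of $\Kh$ becomes trivial. First I would handle the direction $\Kh p \to \K p$: this is already covered by Proposition \ref{prop.Kh2K} as a special case, since $p \in \PL$, so nothing new is needed there. The real content is the converse $\K p \to \Kh p$, which fails for arbitrary $\alpha$ and is exactly where the atomic assumption is used.

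For $\K p \to \Kh p$, suppose $\M, w \vDash \K p$. By the alternative semantics for $\K\alpha$ on $\PL$-formulas (derived from Proposition \ref{prop.nept}), this means for every $v \in \M$ there exists some $x \in \R(v, p)$. Since $\R(v,p) \subseteq \S(p) = \{p\}$, the only possible witness is $x = p$; hence $\R(v,p) = \{p\}$, i.e. $p \in \R(v,p)$, for every $v \in \M$. Now take the single element $p \in \S(p)$: it satisfies ``for every $v \in \M$, $p \in \R(v,p)$'', which is precisely the truth condition for $\M, w \vDash \Kh p$. Equivalently, in the reformulated semantics, $\R(\M, p) = \bigcap_{v\in\M}\R(v,p) = \{p\} \neq \emptyset$, so $\M, w \vDash \Kh p$.

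Combining the two directions gives $\M, w \vDash \Kh p \lra \K p$ for every pointed model $\M, w$, hence the schema is valid. I do not anticipate any real obstacle here — the argument is a short unfolding of definitions — but the one point that deserves a sentence of emphasis is \emph{why} this does not generalize: for a non-atomic $\alpha$, $\K\alpha$ only guarantees that each world has \emph{some} resolution, while $\S(\alpha)$ may have more than one element, so the resolutions witnessing truth at different worlds need not coincide; the collapse of the de re / de dicto distinction is an artifact of $\S(p)$ being a singleton. I would state this observation explicitly, since it is the conceptual payoff the surrounding text is building toward.
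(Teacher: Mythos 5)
Your argument is correct and is exactly the unfolding the paper intends: the paper states this proposition without an explicit proof, relying on the surrounding remark that each atomic $p$ has at most one fixed resolution, which is precisely your observation that $\S(p)=\{p\}$ forces the witness for $\K p$ at every world to be the same element $p$, yielding the uniform resolution required for $\Kh p$ (the other direction being the special case of Proposition \ref{prop.Kh2K}). No gap; your closing remark on why the equivalence does not generalize beyond atoms matches the paper's discussion.
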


However, $ \K \alpha\to \Kh \alpha$ does not hold in general. 
\begin{example}\label{ex.unisub}
$\Kh (p\lor \neg p)\lra \K (p\lor \neg p)$ is not valid, e.g., in the model 
$\lr{\{w,v\}, V}$ where $p\in V(w)$ but $p\not\in V(v)$, $\K(p\lor\neg p)$ holds everywhere but $\Kh(p\lor\neg p)$ holds nowhere. 
\end{example}

The example also shows that the valid $\Kh$-formulas are not closed under uniform substitution, to which we will come back in Section \ref{sec.ax}. 

It is well-known that in intermediate logics negation plays a role in bridging the classical and the intuitionitic validities, we show that this can be understood by the fact that $\neg$ bridges $\Kh$ and $\K$ in our setting. 
\begin{proposition}[Negative translation]\label{prop.negtrans}
$\Kh\neg \alpha \lra \K \neg \alpha$ is valid. As a consequence, $\Kh\neg \neg\alpha \lra \K \alpha$ is valid. 
\end{proposition}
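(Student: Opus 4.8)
The plan is to prove the first equivalence $\Kh\neg\alpha \lra \K\neg\alpha$ semantically, working over an arbitrary pointed model $\M,w$, and then derive the second equivalence $\Kh\neg\neg\alpha \lra \K\alpha$ as an immediate corollary. By Proposition \ref{prop.Kh2K} the direction $\Kh\neg\alpha \to \K\neg\alpha$ is already free (it is the instance of $\Kh\beta\to\K\beta$ with $\beta=\neg\alpha$), so the only real content is the converse $\K\neg\alpha \to \Kh\neg\alpha$. The key fact I would invoke is Proposition \ref{prop.negation}: for every world $v$, $\R(v,\neg\alpha)$ is either $\emptyset$ or the \emph{fixed} singleton $\{f^\alpha_\bot\}$, where $f^\alpha_\bot$ depends only on $\alpha$ and not on $v$. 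This rigidity is exactly what collapses the \textit{de dicto}/\textit{de re} gap for negated formulas.

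First I would unpack $\M,w\vDash\K\neg\alpha$. Using the resolution-based reformulation of $\K$ for $\PL$-formulas (valid by Proposition \ref{prop.nept}), this says that for every $v\in\M$ there exists some $x\in\R(v,\neg\alpha)$, i.e., $\R(v,\neg\alpha)\neq\emptyset$ for every $v$. By Proposition \ref{prop.negation}, $\R(v,\neg\alpha)\neq\emptyset$ forces $\R(v,\neg\alpha)=\{f^\alpha_\bot\}$. Hence for every $v\in\M$ we have $f^\alpha_\bot\in\R(v,\neg\alpha)$, which is precisely the statement that there exists an $x\in\S(\neg\alpha)$ (namely $x=f^\alpha_\bot$) such that for all $v\in\M$, $x\in\R(v,\neg\alpha)$ — i.e., $\M,w\vDash\Kh\neg\alpha$. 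This establishes $\K\neg\alpha\to\Kh\neg\alpha$, and combined with Proposition \ref{prop.Kh2K} gives the equivalence. Since the truth conditions of $\K$ and $\Kh$ do not depend on the designated world (Proposition \ref{prop.KHK}), validity follows.

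For the consequence, I would apply the just-proved equivalence with $\alpha$ replaced by $\neg\alpha$, obtaining $\Kh\neg\neg\alpha\lra\K\neg\neg\alpha$. It then remains to note $\K\neg\neg\alpha\lra\K\alpha$: for $\alpha\in\PL$ the double-negation $\neg\neg\alpha$ is classically equivalent to $\alpha$ (recall from the remark after Proposition \ref{prop.nept} that $\M,w\vDash\neg\beta \iff \M,w\nvDash\beta$, applied twice), so $\neg\neg\alpha$ and $\alpha$ have the same truth set in every model, whence $\K$ cannot distinguish them. Chaining the two equivalences yields $\Kh\neg\neg\alpha\lra\K\alpha$. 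I do not anticipate a genuine obstacle here; the only point requiring a little care is making sure the ``fixed singleton'' claim of Proposition \ref{prop.negation} is applied uniformly across \emph{all} worlds $v\in\M$ simultaneously — that uniformity is the whole reason the existential witness for $\Kh$ can be chosen, and it is worth stating explicitly rather than leaving implicit.
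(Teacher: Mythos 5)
Your proposal is correct and follows essentially the same route as the paper: both rest on Proposition \ref{prop.negation} (the resolution of $\neg\alpha$, when nonempty, is the fixed singleton $\{f^\alpha_\bot\}$ independent of the world), which supplies the uniform witness turning world-wise resolvability ($\K\neg\alpha$) into a uniform resolution ($\Kh\neg\alpha$), and then obtain $\Kh\neg\neg\alpha\lra\K\alpha$ by instantiating at $\neg\alpha$ and using the classical equivalence of $\neg\neg\alpha$ with $\alpha$. The only cosmetic difference is that you cite Proposition \ref{prop.Kh2K} for the easy direction while the paper writes a single chain of equivalences.
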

\begin{proof}
By (the proof of) Proposition \ref{prop.negation} $$\R(v,\neg \alpha)\not=\emptyset \iff \R(v,\neg \alpha)=\{f^\alpha_\bot\} \iff \R(v,\alpha)=\emptyset $$ 
Thus $\M, w\vDash \Kh \neg \alpha \iff \R(\M,\neg \alpha)\not=\emptyset \iff \R(v, \alpha)=\emptyset \text{ for all } v\in\M \iff \M,w \vDash \K\neg \alpha$. Therefore $\M, w\vDash \Kh \neg \neg \alpha \iff \M,w \vDash \K\neg \neg \alpha \iff \M,w \vDash \K \alpha.$ 
\end{proof}

Although we cannot reduce $\Kh$ to $\K$ in general, we can reduce the complexity of $\alpha$ in $\Kh\alpha$ step by step, which will play an important role in the later sections. 
 \begin{proposition}\label{prop.khreduction}
The following formulas and schemata are valid: 
	\begin{itemize}
		\item $\Kh \bot\lra \bot$
		\item $\Kh(\alpha\lor\beta)\leftrightarrow \Kh\alpha\lor\Kh\beta$
		\item $\Kh(\alpha\land\beta)\leftrightarrow \Kh\alpha\land\Kh\beta$
		\item $\Kh(\alpha\to\beta)\lra \K\Box(\Kh\alpha\to\Kh\beta)$
	\end{itemize}
\end{proposition}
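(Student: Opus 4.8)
\textsc{Proof proposal.} The plan is to evaluate each biconditional directly from the semantics, using throughout the reformulation $\M,w\vDash\Kh\gamma\iff\R(\M,\gamma)\neq\emptyset$ together with the world-independence of $\Kh$ and $\K$ (Proposition~\ref{prop.KHK}); this lets us treat the classical connectives outside $\Kh$ as Boolean operations on model-level truth values, so it suffices to argue at the level of $\R(\M,\cdot)=\bigcap_{v\in\M}\R(v,\cdot)$.

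For the first three items I would compute $\R(\M,\cdot)$ componentwise. Since $\R(v,\bot)=\emptyset$ for every $v$ and $W_\M\neq\emptyset$, we get $\R(\M,\bot)=\emptyset$, so $\Kh\bot$ is satisfied nowhere and $\Kh\bot\lra\bot$ is valid. For disjunction, the two components of $\R(v,\alpha\lor\beta)=(\R(v,\alpha)\times\{0\})\cup(\R(v,\beta)\times\{1\})$ lie in the disjoint ``slices'' $\SS\times\{0\}$ and $\SS\times\{1\}$, so $\bigcap_{v}$ distributes over the union, yielding $\R(\M,\alpha\lor\beta)=(\R(\M,\alpha)\times\{0\})\cup(\R(\M,\beta)\times\{1\})$, which is non-empty iff $\R(\M,\alpha)\neq\emptyset$ or $\R(\M,\beta)\neq\emptyset$. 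For conjunction, $\bigcap_{v}$ commutes with the Cartesian product, so $\R(\M,\alpha\land\beta)=\R(\M,\alpha)\times\R(\M,\beta)$, which is non-empty iff both factors are. Combining these with Proposition~\ref{prop.KHK} and the classical reading of $\lor$ and $\land$ gives the two biconditionals.

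The fourth item is the substantial one. First I would unfold both sides: $\M,w\vDash\Kh(\alpha\to\beta)$ iff there is $f\in\S(\beta)^{\S(\alpha)}$ with $f[\R(v,\alpha)]\subseteq\R(v,\beta)$ for every $v\in\M$; and, unfolding $\K$, then $\Box$ (whose submodels must contain the evaluation point), and then using Proposition~\ref{prop.KHK} to drop the designated world inside $\Kh$, one checks that $\M,w\vDash\K\Box(\Kh\alpha\to\Kh\beta)$ iff for every non-empty submodel $\M'\subseteq\M$ we have $\R(\M',\alpha)\neq\emptyset\Rightarrow\R(\M',\beta)\neq\emptyset$ — the point being that quantifying over $v\in\M$ and then over submodels containing $v$ amounts to quantifying over all non-empty submodels of $\M$. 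The left-to-right direction is then easy: given such an $f$ and a non-empty $\M'$ with a witness $x\in\R(\M',\alpha)=\bigcap_{v\in\M'}\R(v,\alpha)$, we have $f(x)\in\R(v,\beta)$ for all $v\in\M'$, hence $f(x)\in\R(\M',\beta)$.

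The heart of the argument is the converse: from the ``local'' implications on all submodels, assemble a single global witness $f$. For each $x\in\S(\alpha)$ set $\M_x=\{v\in\M\mid x\in\R(v,\alpha)\}$; if $\M_x\neq\emptyset$, then $x\in\R(\M_x,\alpha)$, so the hypothesis applied to $\M_x$ gives $\R(\M_x,\beta)\neq\emptyset$, and we let $f(x)$ be any of its elements, while if $\M_x=\emptyset$ we let $f(x)$ be any element of $\S(\beta)$, non-empty by Proposition~\ref{prop.non-emptyS}; only finitely many choices are made, since $\S(\alpha)$ is finite by the same proposition. Then for any $v\in\M$ and any $x\in\R(v,\alpha)$ we have $v\in\M_x$, hence $f(x)\in\R(\M_x,\beta)\subseteq\R(v,\beta)$, so $f\in\bigcap_{v\in\M}\R(v,\alpha\to\beta)$, which witnesses $\M\vDash\Kh(\alpha\to\beta)$. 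I expect the main obstacle to be precisely this step for the implication: first correctly recognizing the $\K\Box$-modality as ``holds on every non-empty submodel'' (the interplay of the point-preserving $\Box$ with the point-independent $\Kh$), and then the $\M_x$-construction of the witnessing function; the remaining verifications are routine.
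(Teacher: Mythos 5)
Your proposal is correct and follows essentially the same route as the paper's proof: reduce everything to emptiness of $\R(\M,\cdot)$, read $\K\Box$ as quantification over all (non-empty) submodels, and, for the converse of the implication case, build the uniform witness $f$ from the sets $\M_x=\{v\mid x\in\R(v,\alpha)\}$ (the paper's $W_i$), using finiteness of $\S(\alpha)$ and a default value in $\S(\beta)$ when $\M_x=\emptyset$. The only differences are cosmetic: you prove the left-to-right direction directly instead of by contradiction, and you spell out the conjunction case that the paper omits as routine.
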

	\begin{proof} We only show the non-trivial cases of $\Kh(\alpha\lor\beta)$ and $\Kh(\alpha\to\beta)$. 
\begin{itemize}

    \item $\M,w\vDash\Kh(\alpha\lor\beta)\iff \R(\M,\alpha\lor \beta)\not=\emptyset$\\ $\iff \text{ there exists an } \lr{x, 0} \in \R(\M,\alpha\lor \beta) \text{ or there exists a } \lr{y, 1} \in \R(\M,\alpha\lor \beta)$\\ $\iff \text{ there exists an } x\in \R(\M,\alpha) \text { or there exists a } y\in \R(\M,\beta)$\\$\iff \M,w\vDash \Kh\alpha\text{ or } \M,w\vDash\Kh\beta\iff \M,w\vDash \Kh\alpha\lor\Kh\beta$
 \item Let us now consider the case for $\Kh(\alpha\to\beta)\lra \K\Box(\Kh\alpha\to\Kh\beta)$.\\
 $\Longrightarrow$: Suppose $\M,w\vDash \Kh(\alpha\to\beta)$, then by the semantics, there is some $f\in \R(\M,\alpha\to\beta).$
 Towards a contradiction, suppose $\M,w\not\vDash\K\Box(\Kh\alpha\to\Kh\beta)$.
 That is, there is an $v\in \M$ and an $\M',v\subseteq \M,v$ s.t. $\M',v\vDash\Kh\alpha$ but $\M',v\not\vDash\Kh\beta$. So there is an $x\in \R(\M',\alpha)$. Recall that $f$ is a function with domain $\S(\alpha)$, and $\S(\alpha) \supseteq\R(u,\alpha)$ for all $u\in\M'$, thus $x\in Dom(f)$. Moreover, since $f\in \R(\M,\alpha\to\beta)$, $f\in \R(\M',\alpha\to\beta).$ Let $y=f(x)$. By the definition of $\R(\M',\alpha\to\beta)$, $y\in\R(u,\beta)$ for each $u\in \M'$. Therefore $\M',v\vDash\Kh\beta$, a contradiction.
 
 $\Longleftarrow$: Suppose $\M,w\vDash\K\Box(\Kh\alpha\to\Kh\beta)$, then for all $v\in\M$, $\M,v\vDash\Box(\Kh\alpha\to\Kh\beta)$. By the semantics of $\Box$, for any $v\in \M$ and for any $\M',v\subseteq \M,v$, $\M',v\vDash\Kh\alpha\to\Kh\beta$ ($\ast$). By Proposition \ref{prop.non-emptyS}, $\S(\alpha)$ is finite and non-empty, thus we can assume $\S(\alpha)=\{x_0,x_1,\dots,x_n\}$ for some $n\in\mathbb{N}$. For $i\in\{0,\dots,n\}$, let $W_i=\{w\mid x_i\in\R(w,\alpha)\}$. If $W_i$ is not empty then let $\M_i$ be the submodel of $\M$ such that $W_{\M_i}=W_i$. Clearly $x_i\in\R(W_i,\alpha)$, therefore for any $u\in \M_i$, $\M_i,u\vDash\Kh\alpha$. By ($\ast$) we have $\M_i,u\vDash\Kh\beta$ thus there is a $y_i\in\R(W_i,\beta)$. Now fix a $y\in \S(\beta)\not=\emptyset$, let $f=\{\langle x_i,y_i\rangle\mid i\in\{0,\dots,n\} \text{ and $W_i\not=\emptyset$}\}\cup \{\langle x_i,y\rangle\mid i\in\{0,\dots,n\} \text{ and $W_i=\emptyset$}\}$. Clearly $f\in\S(\beta)^{\S(\alpha)}$. Now for any $v\in \M$ and $i\in\{0,\dots,n\}$, if $x_i\in\R(w,\alpha)$ \text{ then } $v\in W_i$ by the definition of $W_i$, thus  $y_i\in\R(v,\beta)$ by the construction of $f$. Therefore $f[\R(v,\alpha)]\subseteq \R(v,\beta)$ for all $v\in \M$. It follows that $\M,v\vDash \Kh(\alpha\to\beta)$ for all $v\in \M$ including $w$. Note that the axiom of choice is not needed in the above finitary constructions.
\end{itemize}
\end{proof}
\begin{remark}
Propositions \ref{prop.K2Kh} and \ref{prop.khreduction} will help us to  eliminate the $\Kh$ modalities without changing the expressive power. Actually, we can also eliminate the $\Box$ modality eventually. We will discuss the reduction formally in Section \ref{sec.ax} when discussing the axiomatization featuring the corresponding reduction axioms. 
\end{remark}
Now we have an intuitive reading of $\alpha\lor\neg\alpha$ in inquisitive logic based on  Propositions \ref{prop.khreduction} and \ref{prop.negtrans}.
$$\M,w \vDash \Kh(\alpha\lor\neg\alpha)\iff \M,w \vDash \Kh\alpha\lor\Kh\neg\alpha\iff \M,w \vDash \Kh\alpha\lor\K\neg\alpha$$
The formula $\Kh\alpha\lor\K\neg\alpha$ says either you know how to resolve $\alpha$ or you know it is not resolvable\slash true, and it is clearly not valid in general. This explains intuitively why inquisitive logic does not accept the law of excluded middle. 

Given the validity of $\Kh(\alpha\to\beta)\lra \K\Box(\Kh\alpha\to\Kh\beta)$ and Proposition \ref{prop.KHK}, we can give an alternative compositional truth condition to $\Kh(\alpha\to\beta)$, which is more handy to use. 
\begin{proposition}\label{prop.alternativeimp}
For any model $\M$, $\M,w\vDash \Kh(\alpha\to\beta)$ iff for any $\M'\subseteq \M$, $\M\vDash\Kh\alpha$ implies $\M\vDash \Kh\beta$. 
\end{proposition}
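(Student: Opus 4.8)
The plan is to derive Proposition \ref{prop.alternativeimp} directly from the already-established equivalence $\Kh(\alpha\to\beta)\lra\K\Box(\Kh\alpha\to\Kh\beta)$ in Proposition \ref{prop.khreduction}, together with the model-insensitivity of $\Kh$ from Proposition \ref{prop.KHK}. First I would unfold $\M,w\vDash\K\Box(\Kh\alpha\to\Kh\beta)$ using the semantic clauses: by the clause for $\K$ this holds iff for every $v\in\M$ we have $\M,v\vDash\Box(\Kh\alpha\to\Kh\beta)$, and by the clause for $\Box$ this holds iff for every $v\in\M$ and every submodel $(\M',v)\subseteq(\M,v)$ we have $\M',v\vDash\Kh\alpha\to\Kh\beta$. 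So the target reduces to showing that this nested-quantifier statement is equivalent to ``for every $\M'\subseteq\M$, $\M'\vDash\Kh\alpha$ implies $\M'\vDash\Kh\beta$''.

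The key observation bridging the two formulations is that the family of pointed submodels $\{(\M',v)\mid v\in\M,\ (\M',v)\subseteq(\M,v)\}$ is, up to the choice of designated point, exactly the family of non-empty submodels $\M'\subseteq\M$: every non-empty submodel $\M'$ contains at least one world $v$, giving a pointed submodel $(\M',v)\subseteq(\M,v)$; conversely any pointed submodel $(\M',v)$ yields the submodel $\M'$. Then I invoke Proposition \ref{prop.KHK}: since $\M',v\vDash\Kh\alpha\iff\M'\vDash\Kh\alpha$ (and likewise for $\beta$ and for any designated point), the condition $\M',v\vDash\Kh\alpha\to\Kh\beta$ does not depend on $v$ and is precisely $\M'\vDash\Kh\alpha$ implies $\M'\vDash\Kh\beta$. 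Quantifying over all $v\in\M$ and all $(\M',v)\subseteq(\M,v)$ thus collapses to quantifying over all non-empty submodels $\M'\subseteq\M$. Finally, the empty submodel case is harmless: either one notes that $\M'=\emptyset$ has no worlds so $\M'\vDash\Kh\alpha$ holds vacuously while $\R(\emptyset,\alpha)=\bigcap_{v\in\emptyset}\R(v,\alpha)$ needs a moment's care — but since the paper stipulates $W$ is non-empty for models, and more to the point the statement as written quantifies over $\M'\subseteq\M$ with $\M$ itself giving the relevant instances, I would simply remark that submodels are understood to be non-empty (consistent with Definition \ref{imodel}), or alternatively absorb the empty case by noting it cannot falsify the implication.

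Concretely, the proof is two short directions. For ($\Rightarrow$): assume $\M,w\vDash\Kh(\alpha\to\beta)$, hence $\M,w\vDash\K\Box(\Kh\alpha\to\Kh\beta)$; take any $\M'\subseteq\M$ with $\M'\vDash\Kh\alpha$, pick any $v\in\M'$, then $(\M',v)\subseteq(\M,v)$ and the unfolded semantics gives $\M',v\vDash\Kh\alpha\to\Kh\beta$, so from $\M',v\vDash\Kh\alpha$ we get $\M',v\vDash\Kh\beta$, i.e.\ $\M'\vDash\Kh\beta$ by Proposition \ref{prop.KHK}. For ($\Leftarrow$): assume the submodel condition; to show $\M,w\vDash\K\Box(\Kh\alpha\to\Kh\beta)$, take any $v\in\M$ and any $(\M',v)\subseteq(\M,v)$; if $\M',v\vDash\Kh\alpha$ then $\M'\vDash\Kh\alpha$, so by hypothesis $\M'\vDash\Kh\beta$, hence $\M',v\vDash\Kh\beta$; thus $\M',v\vDash\Kh\alpha\to\Kh\beta$, and as $v,\M'$ were arbitrary, $\M,w\vDash\K\Box(\Kh\alpha\to\Kh\beta)=\Kh(\alpha\to\beta)$.

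I do not anticipate a genuine obstacle here — this is essentially a repackaging lemma. The only point requiring mild attention is the precise reading of ``$\M'\subseteq\M$'' (whether the empty submodel is allowed, and the footnote convention that model world-sets are non-empty), which I would handle with a one-line remark rather than a case split. Everything else is a mechanical unfolding of the semantics of $\K$ and $\Box$ plus an appeal to Propositions \ref{prop.khreduction} and \ref{prop.KHK}.
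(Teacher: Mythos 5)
Your proposal is correct and follows essentially the same route as the paper's proof: rewrite $\Kh(\alpha\to\beta)$ as $\K\Box(\Kh\alpha\to\Kh\beta)$ via Proposition \ref{prop.khreduction}, observe that the $\K\Box$ combination quantifies over all (pointed) submodels of $\M$, and use Proposition \ref{prop.KHK} to discharge the dependence on the designated point. Your extra remark about empty submodels and the non-emptiness convention is a harmless refinement of the same argument.
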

\begin{proof} It suffices to show that $\M,w\vDash \K\Box(\Kh\alpha\to\Kh\beta)$ iff for any $\M'\subseteq \M$, $\M\vDash\Kh\alpha$ implies $\M\vDash \Kh\beta$, which follows directly from the semantics of $\K\Box$. Note that  Given a pointed model $\M,w$, $\Box$ quantifies over all the submodels of $\M'$ such that $w\in \M'$. Since on an $S5$ model, $\K$ refers to all the points $w\in\M$, the combination of $\K\Box$ quantifies over all the submodels of $\M$. That is, for any $\M'\subseteq\M$ and $v\in\M'$, $\M',v\vDash\Kh\alpha$ implies $\M',v\vDash\Kh\beta$. By Proposition \ref{prop.KHK} the proof is completed.
\end{proof}

In the following, given $\Gamma\subseteq \LPL,$ let $\Kh\Gamma=\{\Kh\gamma \mid \gamma\in\Gamma\}$. As another consequence of the validity of $\Kh(\alpha\to\beta)\lra \K\Box(\Kh\alpha\to\Kh\beta)$, we have: 
\begin{proposition}\label{prop.validkh}
$\Kh\Gamma\vDash \Kh(\alpha\to \beta)$ iff $\Kh\Gamma\vDash \Kh\alpha\to \Kh\beta$. As a special case, $\vDash \Kh(\alpha\to \beta)$ iff $\vDash \Kh\alpha\to \Kh\beta$.
\end{proposition}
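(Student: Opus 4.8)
The plan is to lean on the reduction $\Kh(\alpha\to\beta)\lra \K\Box(\Kh\alpha\to\Kh\beta)$ established in Proposition~\ref{prop.khreduction} and to split the biconditional into its two directions.

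For the direction from $\Kh\Gamma\vDash \Kh(\alpha\to\beta)$ to $\Kh\Gamma\vDash \Kh\alpha\to\Kh\beta$, I would first note that $\K\Box\psi\to\psi$ is valid for every $\psi$: $\K$ is reflexive on $S5$ models, and since $(\M,w)\subseteq(\M,w)$ the modality $\Box$ is reflexive as well. Instantiating with $\psi=\Kh\alpha\to\Kh\beta$ and combining with the reduction yields the validity of $\Kh(\alpha\to\beta)\to(\Kh\alpha\to\Kh\beta)$, hence $\Kh(\alpha\to\beta)\vDash\Kh\alpha\to\Kh\beta$; transitivity of $\vDash$ then carries $\Kh\Gamma$ through. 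This direction is routine. (One could equally invoke Proposition~\ref{prop.alternativeimp} with $\M'=\M$.)

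For the converse, assume $\Kh\Gamma\vDash \Kh\alpha\to\Kh\beta$ and take an arbitrary pointed model $\M,w$ with $\M,w\vDash\Kh\Gamma$; by the reduction it suffices to show $\M,w\vDash\K\Box(\Kh\alpha\to\Kh\beta)$, i.e.\ that $\M',v\vDash\Kh\alpha\to\Kh\beta$ for every $v\in\M$ and every submodel $\M'$ with $v\in\M'$. The crucial — and essentially only — observation is that $\Kh\gamma$ is preserved under passing to submodels: since $\R(\M',\gamma)=\bigcap_{u\in\M'}\R(u,\gamma)\supseteq\bigcap_{u\in\M}\R(u,\gamma)=\R(\M,\gamma)$, whenever $\R(\M,\gamma)\neq\emptyset$ we also have $\R(\M',\gamma)\neq\emptyset$; this is just the persistence property of inquisitive support read through our semantics. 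By Proposition~\ref{prop.KHK}, $\M,w\vDash\Kh\gamma$ gives $\M\vDash\Kh\gamma$, hence $\M'\vDash\Kh\gamma$, hence $\M',v\vDash\Kh\Gamma$; applying the hypothesis at $\M',v$ delivers $\M',v\vDash\Kh\alpha\to\Kh\beta$, as required. The special case is obtained by setting $\Gamma=\emptyset$.

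The main obstacle is really just spotting the submodel-monotonicity of $\Kh$; once that is in hand, the rest is bookkeeping with the semantics of $\K$ and $\Box$ via Proposition~\ref{prop.khreduction} and Proposition~\ref{prop.KHK}. A minor point to keep honest is that all submodels occurring here are non-empty (because $v\in\M'$), in line with the standing convention that models are non-empty.
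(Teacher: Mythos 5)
Your proof is correct and follows essentially the same route as the paper's: both directions go through the reduction $\Kh(\alpha\to\beta)\lra\K\Box(\Kh\alpha\to\Kh\beta)$, with reflexivity of $\K$ and $\Box$ for the easy direction and submodel-persistence of $\Kh\Gamma$ (the same uniform resolutions work on any submodel) for the converse. The only cosmetic difference is that you verify the $\K\Box$ clause directly rather than routing through Proposition~\ref{prop.alternativeimp}, which changes nothing essential.
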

\begin{proof}
It suffices to show that $\Kh\Gamma\vDash \K\Box(\Kh\alpha\to\Kh\beta)$ iff $\Kh\Gamma\vDash \Kh\alpha\to \Kh\beta$
$\Longrightarrow$ is based on the fact that $\K\phi\to\phi$ and $\Box\phi\to\phi$ are valid. $\Longleftarrow$ is based on the fact that if $\M, w\vDash \Kh\Gamma$ then any for submodel $\M'$ of $\M$, $\M'\vDash \Kh\Gamma$ by the semantics of $\Kh$. Indeed, if for all the worlds in $\M$, there exists a uniform resolution for each formula in $\Gamma$, the same resolutions will certainly serve as the uniform resolutions for worlds in $\M'$. Therefore, supposing $\Kh\Gamma\vDash \Kh\alpha\to \Kh\beta$ and $\M,w\vDash\Kh\Gamma$, it follows that $\Kh\alpha\to \Kh\beta$ is satisfied on all the submodels of $\M$. By Proposition \ref{prop.KHK}, that is, for any $\M'\subseteq \M$, $\M\vDash\Kh\alpha$ implies $\M\vDash \Kh\beta$. By Proposition \ref{prop.alternativeimp}, we have $\M,w\vDash\Kh(\alpha\to \beta).$
\end{proof}

Based on Proposition \ref{prop.validkh}, we have the following theorem for $\Kh$ formulas, which is the counterpart of Proposition 3.10 (deduction theorem) in \cite{Ciardelli2011}. 
\begin{theorem}\label{thm.deductionthm}
For any $\alpha\in\LPL$ and $\Gamma, \Gamma' \subseteq\LPL$ such that $\Gamma$ is finite, $$\Kh(\Gamma'\cup\Gamma)\vDash\Kh\alpha\iff\Kh\Gamma'\vDash\Kh(\bigwedge_{\gamma\in\Gamma}\gamma\to \alpha).$$ 
\end{theorem}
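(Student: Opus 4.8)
The plan is to reduce the claim to Proposition~\ref{prop.validkh} by two preliminary moves: first pushing $\Kh$ through the finite conjunction $\bigwedge_{\gamma\in\Gamma}\gamma$, and then applying the ordinary classical deduction theorem to shift that conjunct across the entailment symbol. Throughout, write $\chi$ for $\bigwedge_{\gamma\in\Gamma}\gamma$, with the usual convention $\chi=\top$ when $\Gamma=\emptyset$.

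Since $\Gamma$ is finite, I would first iterate the valid equivalence $\Kh(\alpha\land\beta)\lra\Kh\alpha\land\Kh\beta$ of Proposition~\ref{prop.khreduction} (using that $\Kh\top$ is valid, which follows from $\R(\M,\top)\neq\emptyset$, to cover the empty case) to obtain that $\Kh\chi\lra\bigwedge_{\gamma\in\Gamma}\Kh\gamma$ is valid. Hence a pointed model satisfies $\Kh(\Gamma'\cup\Gamma)=\Kh\Gamma'\cup\Kh\Gamma$ exactly when it satisfies $\Kh\Gamma'\cup\{\Kh\chi\}$, so
\[
\Kh(\Gamma'\cup\Gamma)\vDash\Kh\alpha\quad\Longleftrightarrow\quad\Kh\Gamma'\cup\{\Kh\chi\}\vDash\Kh\alpha.
\]
Next, because the connectives outside the scope of $\Kh$ are classical and $\vDash$ is defined pointwise over pointed models, the standard deduction theorem applies verbatim, giving $\Kh\Gamma'\cup\{\Kh\chi\}\vDash\Kh\alpha$ iff $\Kh\Gamma'\vDash\Kh\chi\to\Kh\alpha$. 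Finally, Proposition~\ref{prop.validkh}, instantiated with premise set $\Kh\Gamma'$, antecedent $\chi$, and consequent $\alpha$, yields $\Kh\Gamma'\vDash\Kh\chi\to\Kh\alpha$ iff $\Kh\Gamma'\vDash\Kh(\chi\to\alpha)$, i.e.\ iff $\Kh\Gamma'\vDash\Kh(\bigwedge_{\gamma\in\Gamma}\gamma\to\alpha)$. Chaining these three equivalences closes the proof.

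I do not expect any genuine obstacle: the substantive content is already packaged in Proposition~\ref{prop.validkh}, and the rest is bookkeeping. The only points calling for a little care are checking that the binary $\Kh$-over-$\land$ distribution indeed lifts to the full finite conjunction $\chi$, including the degenerate case $\Gamma=\emptyset$ (where one needs the validity of $\Kh\top$, equivalently of $\Kh(\top\to\alpha)\lra\Kh\alpha$), and confirming that Proposition~\ref{prop.validkh}, although stated for premise sets of the shape $\Kh\Gamma$, is legitimately applied with $\Gamma'$ in the role of $\Gamma$.
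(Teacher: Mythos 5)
Your proposal is correct and follows essentially the same route as the paper's own proof: both unfold the entailment via the semantic (classical) deduction step, use Proposition~\ref{prop.khreduction} to trade $\bigwedge_{\gamma\in\Gamma}\Kh\gamma$ for $\Kh\bigwedge_{\gamma\in\Gamma}\gamma$, and finish with Proposition~\ref{prop.validkh}. The only differences are cosmetic (you distribute $\Kh$ over the conjunction before rather than after the deduction step, and you make the $\Gamma=\emptyset$ convention via $\Kh\top$ explicit), so there is nothing to fix.
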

\begin{proof}
$$\begin{aligned}
\Kh(\Gamma'\cup\Gamma)\vDash\Kh\alpha 
&\iff \text{ for any } \M,w \text{ such that }\M,w\vDash\Kh\Gamma'\cup\Kh\Gamma, \text{ then }\M,w\vDash\Kh\alpha\\
&\iff \text{ for any } \M,w \text{ such that }\M,w\vDash\Kh\Gamma', \text{ if }\M,w\vDash\bigwedge_{\gamma\in\Gamma}\Kh\gamma, \text{ then }\M,w\vDash\Kh\alpha\\
&\iff \text{ for any } \M,w \text{ such that }\M,w\vDash\Kh\Gamma', \M,w\vDash\bigwedge_{\gamma\in\Gamma}\Kh\gamma\to\Kh\alpha\\
&\iff\Kh\Gamma'\vDash\bigwedge_{\gamma\in\Gamma}\Kh\gamma\to\Kh\alpha\\
&\iff\Kh\Gamma'\vDash\Kh\bigwedge_{\gamma\in\Gamma}\gamma\to\Kh\alpha \qquad (\text{by Proposition \ref{prop.khreduction}})\\
&\iff\Kh\Gamma'\vDash\Kh(\bigwedge_{\gamma\in\Gamma}\gamma\to\alpha) \qquad (\text{by Proposition \ref{prop.validkh}})
\end{aligned}$$
\end{proof}

As an analog of the \textit{persistence} of inquisitive formulas over sub-states \cite[Prop. 2.4]{Ciardelli2011}, we show the persistence of $\Kh\alpha$ in our setting with $\Box$. Intuitively, once we know how to resolve $\alpha$, we will not forget, even given more information. 
\begin{proposition}[Persistence]
\label{persistence}
$\Kh\alpha\lra\Box\Kh\alpha$ is valid for any $\alpha\in\LPL$.
\end{proposition}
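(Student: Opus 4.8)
The plan is to prove the two directions separately, exploiting the fact that the semantics of $\Kh$ does not depend on the designated world and that a uniform resolution over a model is automatically a uniform resolution over any submodel. For the direction $\Box\Kh\alpha\to\Kh\alpha$, I would simply invoke the validity of $\Box\varphi\to\varphi$, which is immediate from the semantics of $\Box$ since $(\M,w)\subseteq(\M,w)$; hence if $\M,w\vDash\Box\Kh\alpha$ then in particular $\M,w\vDash\Kh\alpha$. This direction is trivial and requires no work on the resolutions.

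The interesting direction is $\Kh\alpha\to\Box\Kh\alpha$. Suppose $\M,w\vDash\Kh\alpha$. Unfolding the (reformulated) semantics, this means $\R(\M,\alpha)=\bigcap_{v\in\M}\R(v,\alpha)\neq\emptyset$, so fix some $x\in\S(\alpha)$ with $x\in\R(v,\alpha)$ for every $v\in\M$. Now take any submodel $(\M',w)\subseteq(\M,w)$. Since $W_{\M'}\subseteq W_\M$, for every $v\in\M'$ we still have $x\in\R(v,\alpha)$, so $x\in\R(\M',\alpha)$, whence $\R(\M',\alpha)\neq\emptyset$ and $\M',w\vDash\Kh\alpha$. (Here I am implicitly using Proposition~\ref{prop.rind}, or rather the even simpler fact that $\R(v,\alpha)$ is computed solely from $\V(v)$, which is unchanged when passing to a submodel — so the same set $\R(v,\alpha)$ appears in both $\M$ and $\M'$.) As $(\M',w)$ was an arbitrary submodel containing $w$, we conclude $\M,w\vDash\Box\Kh\alpha$.

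There is no real obstacle here: the whole content is the observation that shrinking the model only shrinks the intersection's index set, so a witness surviving over all of $\M$ survives over any submodel — this is the exact analog of the monotonicity underlying persistence of support in inquisitive semantics. If one prefers an even shorter argument, one can note that by Proposition~\ref{prop.KHK} $\M,w\vDash\Kh\alpha$ iff $\M\vDash\Kh\alpha$, and combine this with the submodel-closure of $\Kh$-satisfaction already used in the proof of Proposition~\ref{prop.validkh} (the $\Longleftarrow$ part). The only point to be careful about is not to conflate $\S(\alpha)$ (the fixed, model-independent resolution space) with $\R(v,\alpha)$ (the world-dependent actual resolutions); the witness $x$ lives in $\S(\alpha)$ and the membership $x\in\R(v,\alpha)$ is what must be preserved, which it is.
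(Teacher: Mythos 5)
Your proof is correct and follows essentially the same route as the paper: the paper's forward direction is exactly the observation that $\R(\M,\alpha)\subseteq\R(\M',\alpha)$ for any submodel $\M'$ (which is your witness-survives-under-shrinking-the-intersection argument, with Proposition~\ref{prop.rind} implicitly justifying that resolutions are unchanged in submodels), and the backward direction is the same triviality that $\M$ is a submodel of itself.
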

\begin{proof}
$\Longrightarrow$: Suppose $\M,w \vDash\Kh \alpha$ then $\R(\M,\alpha)\not=\emptyset$. It is clear that $\R(\M,\alpha)\subseteq \R(\M',\alpha)$ for any submodel $\M'$. Therefore $\R(\M',\alpha)\not=\emptyset$ for any submodel $\M'$, thus $\M,w \vDash  \Box\Kh\alpha$. 

$\Longleftarrow$: It is trivial since a model is also a submodel of itself. 
\end{proof}

Another important interaction property between $\Kh$ and $\Box$ is the following, which can be compared to Proposition 2.5 in \cite{Ciardelli2011} regarding the singleton state. 
\begin{proposition}\label{prop.vervalidy}
$ \alpha\lra\Diamond\Kh\alpha$ is valid for any $\alpha\in\PL$. 
\end{proposition}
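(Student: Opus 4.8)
The plan is to unfold $\Diamond\Kh\alpha$ using $\Diamond=\neg\Box\neg$ and the semantics of $\Box$, so that $\M,w\vDash\Diamond\Kh\alpha$ amounts to: there is a submodel $(\M',w)\subseteq(\M,w)$ with $\M',w\vDash\Kh\alpha$, i.e.\ with $\R(\M',\alpha)\neq\emptyset$. The statement then reduces to showing that $\M,w\vDash\alpha$ holds exactly when such a witnessing submodel exists. The key tools will be Proposition~\ref{prop.nept} (resolvability equals truth) and Proposition~\ref{prop.rind} (the resolution set $\R(w,\alpha)$ depends only on the valuation at $w$).

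For the left-to-right direction I would take the \emph{singleton submodel}. Assume $\M,w\vDash\alpha$; by Proposition~\ref{prop.nept} this gives $\R(w,\alpha)\neq\emptyset$. Let $\M'$ be the submodel of $\M$ with $W_{\M'}=\{w\}$ and the restricted valuation; it is a legitimate (non-empty) information model and $(\M',w)\subseteq(\M,w)$. By Proposition~\ref{prop.rind} the resolution set at $w$ is unchanged, so $\R(\M',\alpha)=\R(w,\alpha)\neq\emptyset$, hence $\M',w\vDash\Kh\alpha$ and therefore $\M,w\vDash\Diamond\Kh\alpha$.

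For the converse, suppose $\M,w\vDash\Diamond\Kh\alpha$, so there is $(\M',w)\subseteq(\M,w)$ with $\R(\M',\alpha)=\bigcap_{v\in\M'}\R(v,\alpha)\neq\emptyset$. Since $w\in\M'$, this intersection is contained in $\R(w,\alpha)$, which (again by Proposition~\ref{prop.rind}) is the same whether computed in $\M'$ or $\M$; hence $\R(w,\alpha)\neq\emptyset$, and Proposition~\ref{prop.nept} yields $\M,w\vDash\alpha$. Combining both directions gives the validity of $\alpha\lra\Diamond\Kh\alpha$.

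There is no real obstacle here: the only point that needs a moment's care is that the singleton submodel is admissible (which is exactly why the paper stipulates that $W$ is non-empty) and that $\R(w,\alpha)$ is invariant under passing to submodels containing $w$, both of which are already available. The argument is a direct analogue of Proposition~2.5 of \cite{Ciardelli2011} about singleton states, transported to our epistemic setting via the $\Box$/submodel semantics.
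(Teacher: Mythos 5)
Your proof is correct and follows essentially the same route as the paper's: the left-to-right direction uses the singleton submodel containing the current world, and the right-to-left direction uses the fact that resolvability (equivalently, propositional truth) at $w$ is unaffected by passing between $\M$ and a submodel containing $w$. The only cosmetic difference is that you argue directly via $\R(w,\alpha)$ and Propositions~\ref{prop.nept} and~\ref{prop.rind}, where the paper phrases the same facts through $\K\alpha$ on singleton models and the validities $\Kh\alpha\to\K\alpha$ and $\K\alpha\to\alpha$.
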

\begin{proof}
$\Longrightarrow:$ It is valid because we can always go to a singleton submodel containing the current world only, where $\K\alpha$ holds. Note that for singleton models, i.e., models with only one world, $\K\alpha\to\Kh\alpha$ holds trivially for any $\alpha$ since any actual resolution on that single world will be the uniform resolution.

$\Longleftarrow$ is based on the fact that the updates do not change the truth values of propositional formulas on the current world, and the validity of $\Kh\alpha\to\K\alpha$ and $\K\alpha\to\alpha$.
\end{proof}

As a feature distinguishing intuitionistic logic and classical logic, disjunction property is also an important property of inquisitive logic \cite[Prop. 3.9 ]{Ciardelli2011}. It holds naturally in our logic.
\begin{proposition}[Disjunction Property] For any formulas $\alpha,\beta\in\LPL$, $\vDash\Kh(\alpha \lor \beta)   \iff  \vDash\Kh\alpha\text{ or }\vDash\Kh\beta.$ 
\label{prop.disjp}
\end{proposition}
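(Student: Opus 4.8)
The plan is to prove the Disjunction Property $\vDash\Kh(\alpha\lor\beta)\iff\vDash\Kh\alpha$ or $\vDash\Kh\beta$ by combining the reduction of $\Kh$ over disjunction (Proposition~\ref{prop.khreduction}) with the semantic observation that $\Kh$-formulas depend only on the model, not the point (Proposition~\ref{prop.KHK}). The backward direction is immediate: if every pointed model satisfies $\Kh\alpha$, then since $\M,w\vDash\Kh\alpha$ entails $\M,w\vDash\Kh\alpha\lor\Kh\beta$, and by Proposition~\ref{prop.khreduction} this is equivalent to $\M,w\vDash\Kh(\alpha\lor\beta)$, we get $\vDash\Kh(\alpha\lor\beta)$; symmetrically for $\beta$.

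For the forward (contrapositive) direction, suppose $\not\vDash\Kh\alpha$ and $\not\vDash\Kh\beta$. Then there is a pointed model $\M_1,w_1$ with $\M_1\not\vDash\Kh\alpha$, i.e.\ $\R(\M_1,\alpha)=\emptyset$, and a pointed model $\M_2,w_2$ with $\R(\M_2,\beta)=\emptyset$. The key step is to build a single model witnessing $\not\vDash\Kh(\alpha\lor\beta)$. Since by Proposition~\ref{prop.khreduction} we have $\Kh(\alpha\lor\beta)\lra\Kh\alpha\lor\Kh\beta$, it suffices to find one model $\M$ with $\R(\M,\alpha)=\emptyset$ and $\R(\M,\beta)=\emptyset$ simultaneously. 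I would take $\M$ to be the disjoint union of $\M_1$ and $\M_2$ (a model in our sense, since information models are closed under disjoint union and $W$ stays non-empty). By Proposition~\ref{prop.rind}, resolutions depend only on the valuation at a world, so $\R(w,\alpha)$ for $w\in\M$ coincides with its value in $\M_1$ or $\M_2$; hence $\R(\M,\alpha)=\R(\M_1,\alpha)\cap\R(\M_2,\alpha)\subseteq\R(\M_1,\alpha)=\emptyset$, and likewise $\R(\M,\beta)=\emptyset$. Thus $\M\not\vDash\Kh\alpha$ and $\M\not\vDash\Kh\beta$, so $\M\not\vDash\Kh\alpha\lor\Kh\beta$, and therefore $\M\not\vDash\Kh(\alpha\lor\beta)$, giving $\not\vDash\Kh(\alpha\lor\beta)$.

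The main obstacle — and it is a mild one — is ensuring the disjoint-union construction is legitimate and that $\R$ behaves as claimed across it: one needs that taking a disjoint union does not alter the per-world resolution sets (which is exactly Proposition~\ref{prop.rind}), and that $\R(\M,\cdot)$ is the intersection over all worlds (which is the definition). An alternative that sidesteps disjoint unions entirely would be to reason purely syntactically: from Proposition~\ref{prop.khreduction}, $\vDash\Kh(\alpha\lor\beta)$ iff $\vDash\Kh\alpha\lor\Kh\beta$, and since $\Kh\alpha$ and $\Kh\beta$ are, on every model, equivalent to the \emph{global} (model-level) facts $\M\vDash\Kh\alpha$ and $\M\vDash\Kh\beta$ by Proposition~\ref{prop.KHK}, validity of the disjunction would have to be witnessed failure-wise on a common model; combining two separate counter-models into one is unavoidable at that point, so the disjoint union is really the heart of the argument. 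I would present the proof in the compact form: reduce via Proposition~\ref{prop.khreduction}, then handle validity of $\Kh\alpha\lor\Kh\beta$ by the disjoint-union/Proposition~\ref{prop.rind} argument above.
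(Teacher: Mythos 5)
Your proof is correct and follows essentially the same route as the paper: the backward direction via the reduction $\Kh(\alpha\lor\beta)\lra\Kh\alpha\lor\Kh\beta$, and the forward direction by taking the disjoint union of two counter-models, which is exactly the paper's construction. Your extra appeal to Proposition~\ref{prop.rind} merely spells out what the paper summarizes as ``by the semantics of $\Kh$''.
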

\begin{proof} 
$\Longrightarrow$: Suppose $\nvDash\Kh\alpha$ and $\nvDash \Kh\beta$, then we have some models $\M,w$ and $\N,v$ such that 
$\M,w\nvDash \Kh\alpha$ and $\N,v \nvDash \Kh\beta$. Now we can simply merge the two models together as the disjoint union $\M\uplus\N$.
By the semantics of $\Kh$ it is clear that $\M\uplus\N, w\nvDash \Kh\alpha\lor\Kh\beta.$ By Proposition \ref{prop.khreduction}, $\nvDash \Kh(\alpha\lor\beta).$ 

$\Longleftarrow$ is trivial by Proposition \ref{prop.khreduction}.
\end{proof}

As some examples, we show the validity of $\Kh$-versions of some axioms and valid formulas in $\InqL$ (cf. Theorem $\ref{thm.axiom}$). 

\begin{proposition}\label{prop.axvalidity}
The following are valid:
\begin{center}
\begin{tabular}{ll}
$\DNkh$ &$\Kh(\neg\neg p\to p)$  \text{ for $p\in\Prop$}\\
$\KhPeirce$ &$\Kh(((p\to q)\to p)\to p)$ \text{ for $p, q\in\Prop$}\\
$\KhKP$ & $\Kh( (\neg\alpha\to\beta\lor\gamma)\to(\neg\alpha\to\beta)\lor(\neg\alpha\to\gamma))$\\
$\NDkh_k$ & $\Kh((\neg \alpha \to \bigvee_{1\leq i\leq k}
\neg \beta_i)\to \bigvee_{1\leq i\leq k}(\neg \alpha \to \neg \beta_i))$
\end{tabular}
\end{center}
\end{proposition}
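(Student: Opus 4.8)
The plan is to handle all four formulas by the same recipe. Each of $\DNkh$, $\KhPeirce$, $\KhKP$, $\NDkh_k$ has the shape $\Kh(\psi\to\chi)$ with $\psi,\chi\in\PL$, so Proposition \ref{prop.validkh} reduces its validity to that of $\Kh\psi\to\Kh\chi$. I then push $\Kh$ through the Boolean connectives in $\psi$ and $\chi$ using the equivalences of Proposition \ref{prop.khreduction}, and collapse $\Kh$ to $\K$ wherever the resolution space is forced: via Proposition \ref{prop.negtrans} on subformulas of the form $\neg\sigma$ or $\neg\neg\sigma$, and via Proposition \ref{prop.K2Kh} on atoms. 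After this rewriting the remaining goal is either a propositional triviality or a short semantic argument over submodels.

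$\DNkh$ is immediate: by Proposition \ref{prop.validkh} it suffices to show $\vDash\Kh\neg\neg p\to\Kh p$, and by Propositions \ref{prop.negtrans} and \ref{prop.K2Kh} both the antecedent and the consequent are equivalent to $\K p$. For $\KhPeirce$, Proposition \ref{prop.validkh} leaves $\vDash\Kh((p\to q)\to p)\to\Kh p$. Assume $\M\vDash\Kh((p\to q)\to p)$ and, towards a contradiction, $\M\nvDash\Kh p$; by Proposition \ref{prop.K2Kh} some $w\in\M$ has $w\nvDash p$. The singleton submodel on $\{w\}$ satisfies $\Kh(p\to q)$ — using $\Kh(p\to q)\lra\K\Box(\Kh p\to\Kh q)$ from Proposition \ref{prop.khreduction}, this holds on $\{w\}$ because $\K p$ already fails there — so by Proposition \ref{prop.alternativeimp} we get $\{w\}\vDash\Kh p$, i.e.\ $w\vDash p$, a contradiction; hence $\M\vDash\K p$, so $\M\vDash\Kh p$ by Proposition \ref{prop.K2Kh}. (One can also note that the resolution space of Peirce's law is a singleton, so $\Kh$ and $\K$ agree on it by Proposition \ref{prop.nept}, and $\K$ of a classical tautology is valid.)

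The schemata $\KhKP$ and $\NDkh_k$ both follow from the single equivalence
$$\Kh\bigl(\neg\alpha\to\textstyle\bigvee_{i\in I}\delta_i\bigr)\ \lra\ \bigvee_{i\in I}\Kh(\neg\alpha\to\delta_i),$$
valid for every finite non-empty $I$ and all $\alpha,\delta_i\in\PL$: instantiating $I=\{1,2\}$, $\delta_1=\beta$, $\delta_2=\gamma$ and rewriting the consequent disjunction by Proposition \ref{prop.khreduction} gives $\KhKP$ through Proposition \ref{prop.validkh}, and instantiating $I=\{1,\dots,k\}$, $\delta_i=\neg\beta_i$ gives $\NDkh_k$ the same way. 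Right-to-left of the equivalence is routine from Propositions \ref{prop.alternativeimp} and \ref{prop.khreduction}. For left-to-right, suppose $\M\vDash\Kh(\neg\alpha\to\bigvee_i\delta_i)$ but $\M\nvDash\Kh(\neg\alpha\to\delta_i)$ for each $i$; by Propositions \ref{prop.alternativeimp} and \ref{prop.negtrans} choose, for each $i$, a submodel $\M_i\subseteq\M$ with $\M_i\vDash\K\neg\alpha$ and $\M_i\nvDash\Kh\delta_i$; let $\M'$ be the submodel of $\M$ on $\bigcup_i W_{\M_i}$, which again validates $\K\neg\alpha\lra\Kh\neg\alpha$ since every world in it falsifies $\alpha$; then the hypothesis together with Proposition \ref{prop.alternativeimp} and the disjunction clause of Proposition \ref{prop.khreduction} forces $\M'\vDash\Kh\delta_j$ for some $j$, whence, $\M_j$ being a submodel of $\M'$, Persistence (Proposition \ref{persistence}) gives $\M_j\vDash\Kh\delta_j$ — contradiction.

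All the rewriting with Propositions \ref{prop.validkh}, \ref{prop.khreduction}, \ref{prop.negtrans} and \ref{prop.K2Kh} is mechanical; the real content is the submodel manipulation. The step I expect to need the most care is the degenerate case where $\alpha$ holds at every world of $\M$: then no non-empty submodel validates $\K\neg\alpha$, so $\Kh(\neg\alpha\to\delta)$ holds vacuously for every $\delta$ and both sides of the master equivalence are trivially true — which is precisely why non-emptiness of $I$ matters — and the analogous care is needed in $\KhPeirce$ to ensure the singleton one passes to still counts as a submodel.
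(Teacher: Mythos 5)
Your proposal is correct and takes essentially the same route as the paper: reduce each schema via Proposition \ref{prop.validkh} together with the bridges $\Kh\neg\alpha\lra\K\neg\alpha$ and $\Kh p\lra\K p$, handle $\KhPeirce$ with a singleton submodel at a world falsifying $p$, and handle $\KhKP$\slash$\NDkh_k$ by taking the union of witness submodels satisfying $\K\neg\alpha\land\neg\Kh\delta_i$ and deriving a contradiction via persistence of $\Kh$ under submodels. The only difference is presentational: you package $\KhKP$ and $\NDkh_k$ as instances of one master equivalence (and add a neat alternative for Peirce via the singleton resolution space), whereas the paper proves $\KhKP$ and notes $\NDkh_k$ is analogous.
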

\begin{proof}
For $\DNkh$: by Proposition \ref{prop.validkh}, it suffices to check $\vDash\Kh\neg\neg p\to \Kh p.$ By Proposition \ref{prop.negtrans}, it amounts to check $\vDash\K p\to \Kh p,$ which is valid by Proposition \ref{prop.K2Kh}. 

For $\KhPeirce$: by Proposition \ref{prop.validkh}, it suffices to check $\vDash\Kh((p\to q)\to p)\to \Kh p$ which amounts to  $\vDash\K\Box(\K\Box (\Kh p\to \Kh q)\to \Kh p)\to \Kh p$. By Proposition \ref{prop.K2Kh}, we just need to check  $\vDash\K\Box(\K\Box (\K p\to \K q)\to \K p)\to \K p.$ Now suppose $\M,w\nvDash \K p$ then there is $v\in \M$ such that $\M,v\vDash \neg p.$ We need to show that $\M,w\nvDash \K\Box(\K\Box (\K p\to \K q)\to \K p)$. Take the singleton submodel $\M'$ of $\M$ with the world $v$ only, then it is clear that $\M',v\nvDash \K\Box (\K p\to \K q)\to \K p.$ Therefore $\M,w\nvDash \K\Box(\K\Box (\K p\to \K q)\to \K p)$. 


For $\KhKP$: 
By Proposition \ref{prop.validkh}, we only need to check $\vDash\Kh(\neg\alpha\to\beta\lor\gamma)\to\Kh((\neg\alpha\to\beta)\lor(\neg\alpha\to\gamma))$. 
It amounts to  $\vDash\K\Box(\K\neg\alpha\to(\Kh\beta\lor\Kh\gamma))\to\K\Box(\K\neg\alpha\to\Kh\beta)\lor\K\Box(\K\neg\alpha\to\Kh\gamma)$ based on Proposition~\ref{prop.khreduction}. We prove its contraposition. 
Suppose $\M,w \vDash\neg \K\Box(\K\neg\alpha\to\Kh\beta)\land \neg\K\Box(\K\neg\alpha\to\Kh\gamma)$ then there are two submodels $\M'_1$ and $\M'_2$ such that $\M'_1\vDash\K\neg\alpha\land\neg\Kh\beta$ and $\M'_2\vDash \K\neg\alpha\land \neg\Kh\gamma.$ Then the union $\M'_1\cup\M'_2$ is a submodel of $\M$ making $\K\neg\alpha\to(\Kh\beta\lor\Kh\gamma)$ false. Thus $\M,w\nvDash\K\Box(\K\neg\alpha\to(\Kh\beta\lor\Kh\gamma))$.

The validity of $\NDkh_k$ can be proved similarly as in the case of $\KhKP$.
\end{proof}

Note that Peirce's schema $\Kh(((\alpha\to \beta)\to \alpha)\to \alpha)$ is not valid in general. For example, take the instance where $\alpha=p\lor\neg p$ and $\beta=p$ and the full model is a counterexample.

So far, we have seen the $\alpha$s in the valid $\Kh\alpha$ formulas behave pretty much like the valid formulas in the inquisitive logic, we will show it is no coincidence. 

\subsection{$\InqKhL=\InqL$}
In this subsection, we show $\InqKhL=\{\alpha\in \PL \mid\ \vDash \Kh\alpha\}$ is exactly the inquisitive logic $\InqL$. We will actually prove a stronger result showing the corresponding semantics consequences are the same. 
\begin{definition}
    Given any model $\M=\la W,\V \ra$ and a non-empty state $s\subseteq W$, let $\M_s$ be the submodel $\langle s, \V|_{s}
    \rangle$ where 
    $\V|_{s}$ is the restriction of $\V$ on $s$, that is, $\V(w)=
    \V|_{s}(w)$ for any $w\in s$. 
\end{definition}
Here is an easy observation that the worlds outside the given state are irrelevant according to support semantics. This also justifies our notion of $s\Vdash\alpha$ without specifying the model $\M$ below.
\begin{proposition} \label{prop.irre}
Given any two models $\M$ and $\M'$, and a state $s\subseteq W_\M$ and $s\subseteq W_{\M'}$, if $\M_s=\M'_s$, then for any $\alpha\in \LPL$: $\M, s\Vdash\alpha\iff \M', s\Vdash \alpha$. In particular if $s$ is non-empty and $s\subseteq W_\M$, then $\M, s\Vdash\alpha\iff \M_s, s\Vdash \alpha$.
\end{proposition}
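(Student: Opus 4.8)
The plan is to prove the first statement by a straightforward induction on the structure of $\alpha\in\LPL$, using the hypothesis $\M_s=\M'_s$; the second statement then follows immediately by taking $\M'=\M_s$ and noting that $(\M_s)_s=\M_s$, so $\M_s=\M'_s$ trivially holds. The crucial observation that makes the induction go through is that support at a state $s$ never refers to worlds outside $s$: clauses 1 and 2 of Definition \ref{support} only inspect the valuations of worlds in $s$ (via $\V$), and clauses 3--5 recurse on states $t$ with $t\subseteq s$. Since $\M_s=\M'_s$ means $s$ carries the same valuation in both models, every substate $t\subseteq s$ satisfies $\M_t=\M'_t$ as well, so the inductive hypothesis applies to all the sub-calls.

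Concretely, I would set up the induction as follows. For the base cases: $\M,s\Vdash p$ iff $\forall w\in s,\ p\in\V_\M(w)$ iff $\forall w\in s,\ p\in\V_{\M'}(w)$ (since $\V_\M$ and $\V_{\M'}$ agree on $s$ by $\M_s=\M'_s$) iff $\M',s\Vdash p$; and $\M,s\Vdash\bot$ iff $s=\emptyset$ iff $\M',s\Vdash\bot$, which does not even use the hypothesis. For the inductive step, the cases $\alpha\land\beta$ and $\alpha\lor\beta$ are immediate from the inductive hypothesis applied to $\alpha$ and $\beta$ at the same state $s$. For $\alpha\to\beta$: $\M,s\Vdash\alpha\to\beta$ iff for all $t\subseteq s$, $\M,t\Vdash\alpha$ implies $\M,t\Vdash\beta$; since $t\subseteq s\subseteq W_\M$ and $t\subseteq s\subseteq W_{\M'}$ and $\M_t$ is the restriction of $\M_s=\M'_s$ to $t$, we have $\M_t=\M'_t$, so by the inductive hypothesis $\M,t\Vdash\alpha\iff\M',t\Vdash\alpha$ and likewise for $\beta$; hence the condition transfers verbatim to $\M'$.

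There is essentially no hard part here — this is a routine ``locality of support'' argument. The only thing to be a little careful about is the bookkeeping in the implication case: one must check that $\M_t=\M'_t$ follows from $\M_s=\M'_s$ together with $t\subseteq s$, which is immediate since restricting a model to a subset is transitive ($(\M_s)_t=\M_t$ when $t\subseteq s$). I would also explicitly note that the statement is only claimed for non-empty $s$ in the ``in particular'' clause because $\M_s$ was only defined for non-empty states in the preceding definition; the general first statement, however, holds for all $s\subseteq W_\M\cap W_{\M'}$ with $\M_s=\M'_s$, including $s=\emptyset$, where both sides reduce to $\alpha$ being supported by the inconsistent state.
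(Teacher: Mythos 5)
Your induction is correct: the paper states this proposition without proof, treating it as an easy observation, and your argument is exactly the routine ``locality of support'' induction it has in mind, including the key point that in the implication clause every substate $t\subseteq s$ again satisfies the hypothesis, so the inductive hypothesis applies to all sub-calls. The only cosmetic point is that for $t=\emptyset$ the hypothesis is better phrased as ``the valuations agree on $t$'' (since $\M_t$ is only defined for non-empty states), which you effectively handle by noting that the empty state supports every formula in any model.
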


Now we establish the logical equivalence between $\M$ and $(\M,W_\M)$ where $W_\M$ is viewed as the trivial state.
\begin{lemma}
\label{lem.modelstate}
    Given any $\alpha\in\LPL$ and any pointed 
    model $\M,w$, $$\M,w\vDash\Kh\alpha \iff \M\vDash\Kh\alpha \iff \M, W_\M\Vdash\alpha.$$
\end{lemma}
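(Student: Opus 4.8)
The plan is to prove the chain $\M,w\vDash\Kh\alpha \iff \M\vDash\Kh\alpha \iff \M,W_\M\Vdash\alpha$ by first disposing of the left equivalence and then doing the real work on the right one. The left biconditional $\M,w\vDash\Kh\alpha\iff\M\vDash\Kh\alpha$ is immediate from Proposition \ref{prop.KHK}, since the truth of $\Kh\alpha$ does not depend on the designated world. So everything reduces to showing $\M\vDash\Kh\alpha\iff\M,W_\M\Vdash\alpha$, i.e.\ unfolding the semantics: $\R(\M,\alpha)=\bigcap_{v\in W_\M}\R(v,\alpha)\neq\emptyset$ if and only if $\M,W_\M\Vdash\alpha$ in inquisitive support semantics.

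I would prove this last equivalence by induction on the structure of $\alpha\in\LPL$. The atomic and $\bot$ cases are direct from the definitions: $\R(\M,p)\neq\emptyset$ iff $\{p\}\subseteq\R(v,p)$ for all $v$ iff $p\in\V(v)$ for all $v\in W_\M$ iff $\M,W_\M\Vdash p$; and $\R(\M,\bot)=\emptyset$ always (as soon as $W_\M$ is nonempty, which it is by our convention on information models), matching $\M,W_\M\Vdash\bot$ iff $W_\M=\emptyset$. For conjunction, $\R(\M,\alpha\land\beta)=\R(\M,\alpha)\times\R(\M,\beta)$ is nonempty iff both factors are, so the IH gives $\M,W_\M\Vdash\alpha$ and $\M,W_\M\Vdash\beta$, i.e.\ $\M,W_\M\Vdash\alpha\land\beta$. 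For disjunction, I would use Proposition \ref{prop.khreduction}: $\M\vDash\Kh(\alpha\lor\beta)$ iff $\M\vDash\Kh\alpha\lor\Kh\beta$ iff (IH) $\M,W_\M\Vdash\alpha$ or $\M,W_\M\Vdash\beta$ iff $\M,W_\M\Vdash\alpha\lor\beta$. The implication case is the crux.

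For implication, one direction — the harder one — is to show that if $\M,W_\M\Vdash\alpha\to\beta$ then $\R(\M,\alpha\to\beta)\neq\emptyset$. Here I would invoke Proposition \ref{prop.alternativeimp} (or directly Proposition \ref{prop.khreduction}'s clause $\Kh(\alpha\to\beta)\lra\K\Box(\Kh\alpha\to\Kh\beta)$ together with Proposition \ref{prop.KHK}): $\M\vDash\Kh(\alpha\to\beta)$ iff for every submodel $\M'\subseteq\M$, $\M'\vDash\Kh\alpha$ implies $\M'\vDash\Kh\beta$. By the IH applied to the submodels (noting $\M'=\M_t$ for $t=W_{\M'}$, and using Proposition \ref{prop.irre} to identify $\M_t,t\Vdash\gamma$ with $\M,t\Vdash\gamma$), this says: for every nonempty $t\subseteq W_\M$, $\M,t\Vdash\alpha$ implies $\M,t\Vdash\beta$. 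The empty state trivially supports everything, so this is exactly the support clause $\M,W_\M\Vdash\alpha\to\beta$ (which quantifies over all $t\subseteq W_\M$). Running this argument in both directions closes the implication case and hence the induction. The main obstacle I anticipate is bookkeeping around submodels versus states: making sure that the IH is legitimately applicable to each submodel $\M_t$ (the IH is a statement about all pointed models, so this is fine), and that restricting valuations and passing between $\M,t\Vdash\alpha$ and $\M_t,t\Vdash\alpha$ is handled cleanly via Proposition \ref{prop.irre}. Once that correspondence is set up, each inductive clause is a short unfolding, with the disjunction and implication clauses riding on the already-proved Propositions \ref{prop.khreduction} and \ref{prop.alternativeimp}.
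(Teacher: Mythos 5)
Your proposal is correct and follows essentially the same route as the paper's proof: dispose of the first equivalence via Proposition \ref{prop.KHK}, then induct on $\alpha$, handling disjunction and conjunction via Proposition \ref{prop.khreduction} and the implication case via Proposition \ref{prop.alternativeimp}, with the induction hypothesis applied to submodels and Proposition \ref{prop.irre} bridging states and submodels. No gaps to report.
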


\begin{proof} By Proposition \ref{prop.KHK}, we only need to show that for any model $\M$,  $\M\vDash\Kh\alpha$ iff $W_\M\Vdash\alpha$. 
We prove the lemma by induction on $\alpha$. 
$$\begin{aligned}
\M\vDash\Kh p&
\iff \M\vDash \K p\iff \text{ for each } w\in W_\M, p\in \V(w)\iff  W_\M\Vdash p.\\
\M\not\vDash\Kh\bot&
\iff W_\M\neq\varnothing\iff W_\M\not\Vdash\bot.\\
\M\vDash\Kh (\alpha\lor\beta)&\iff \M\vDash\Kh \alpha \lor\Kh \beta \iff \M\vDash\Kh \alpha \text{ or }\M\vDash\Kh \beta\iff W_\M\Vdash\alpha\text{ or } W_\M\Vdash\beta\\&\iff W_\M\Vdash\alpha\lor\beta.\\
\M\vDash\Kh (\alpha\land\beta)&
\iff \M\vDash\Kh \alpha \land\Kh \beta \iff \M\vDash\Kh \alpha \text{ and }\M\vDash\Kh \beta\iff W_\M\Vdash\alpha\text{ and } W_\M\Vdash\beta\\& \iff W_\M\Vdash\alpha\land\beta.\\
\M\vDash \Kh(\alpha\to\beta)&\iff \text{ for any } \M'\subseteq \M, \M'\vDash\Kh \alpha \text{ implies }\M'\vDash\Kh \beta \text{ (by Proposition \ref{prop.alternativeimp})}\\
&\iff\text{ for any non-empty } s\subseteq W_\M, s\Vdash\alpha\text{ implies }s\Vdash\beta \text{ (by IH)} \\&\iff W_\M\Vdash\alpha\to\beta.
\end{aligned}$$
\end{proof}

Since $s=W_{(\M_s)}$ for any non-empty state $s$, as a consequence of Lemma \ref{lem.modelstate} and Proposition \ref{prop.irre}, we have:
\begin{proposition}\label{prop.satemodel}
For each non-empty state $s$ in $\M$, any $\alpha\in\LPL$, $\M, s\Vdash\alpha\iff \M_s\vDash \Kh\alpha.$
\end{proposition}
Now we are ready to establish the relation between $\InqL$ and $\InqKhL$.

\begin{theorem} 
\label{ent}
 Given any $\{\alpha\}\cup \Gamma\subseteq \LPL$, $\Gamma\Vdash \alpha$ iff $\Kh\Gamma\vDash \Kh \alpha$.  
\end{theorem}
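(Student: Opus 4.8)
The plan is to prove both directions of $\Gamma\Vdash\alpha \iff \Kh\Gamma\vDash\Kh\alpha$ by leveraging the bridge results already established, chiefly Proposition~\ref{prop.satemodel} (that $\M,s\Vdash\alpha\iff \M_s\vDash\Kh\alpha$ for non-empty $s$) and the handling of the inconsistent state via $\Kh\bot\lra\bot$ from Proposition~\ref{prop.khreduction}.

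\textbf{The $\Longleftarrow$ direction (i.e. $\Kh\Gamma\vDash\Kh\alpha$ implies $\Gamma\Vdash\alpha$).} Suppose $\Kh\Gamma\vDash\Kh\alpha$. Take any model $\M$ and any state $s\subseteq W_\M$ with $\M,s\Vdash\Gamma$; I must show $\M,s\Vdash\alpha$. If $s=\emptyset$, then $\M,s\Vdash\bot$, and since $\bot\Vdash\alpha$ trivially (every state supporting $\bot$ is the empty state, which supports everything), we are done. If $s\neq\emptyset$, consider the submodel $\M_s$. By Proposition~\ref{prop.satemodel}, $\M,s\Vdash\gamma\iff\M_s\vDash\Kh\gamma$ for each $\gamma\in\Gamma$, so $\M_s\vDash\Kh\Gamma$; since $\M_s$ is non-empty it has a world $w$, and $\M_s,w\vDash\Kh\Gamma$. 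By the semantic-consequence hypothesis $\M_s,w\vDash\Kh\alpha$, i.e. $\M_s\vDash\Kh\alpha$, which by Proposition~\ref{prop.satemodel} again gives $\M,s\Vdash\alpha$.

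\textbf{The $\Longrightarrow$ direction (i.e. $\Gamma\Vdash\alpha$ implies $\Kh\Gamma\vDash\Kh\alpha$).} Suppose $\Gamma\Vdash\alpha$. Take any pointed model $\M,w$ with $\M,w\vDash\Kh\Gamma$; I must show $\M,w\vDash\Kh\alpha$. Since $\M$ is a model (with non-empty world set, by our convention), set $s:=W_\M$, which is a non-empty state. By Lemma~\ref{lem.modelstate}, $\M\vDash\Kh\gamma\iff\M,W_\M\Vdash\gamma$ for each $\gamma$, so from $\M,w\vDash\Kh\Gamma$ (equivalently $\M\vDash\Kh\Gamma$ by Proposition~\ref{prop.KHK}) we get $\M,W_\M\Vdash\Gamma$. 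By the inquisitive-consequence hypothesis $\Gamma\Vdash\alpha$, this yields $\M,W_\M\Vdash\alpha$, and Lemma~\ref{lem.modelstate} once more gives $\M\vDash\Kh\alpha$, hence $\M,w\vDash\Kh\alpha$.

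\textbf{Main obstacle.} The only genuinely delicate point is the treatment of the empty/inconsistent state, which has no counterpart among the (non-empty) world sets of our epistemic models: one must check that restricting attention to non-empty states loses nothing, and here the equivalences $\Kh\bot\lra\bot$ together with the fact that $\bot$ inquisitively entails every formula do the work. Beyond that, the proof is essentially an assembly of Lemma~\ref{lem.modelstate}, Proposition~\ref{prop.satemodel}, and Proposition~\ref{prop.KHK}; I expect no further difficulty, and in particular no appeal to completeness or to the axiom systems is needed, since this is a purely semantic correspondence. As an immediate corollary, taking $\Gamma=\emptyset$ gives $\InqKhL=\InqL$.
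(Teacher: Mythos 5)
Your proof is correct and follows essentially the same route as the paper: the $\Longrightarrow$ direction via Lemma~\ref{lem.modelstate} applied to the trivial state $W_\M$ (with Proposition~\ref{prop.KHK} to pass between pointed and model-level satisfaction), and the $\Longleftarrow$ direction via Proposition~\ref{prop.satemodel}, with the empty state handled separately by noting it supports every formula. No gaps; the appeal to $\Kh\bot\lra\bot$ in your closing remark is not even needed, since the empty-state case is already settled by the support semantics alone.
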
 
\begin{proof}
First, note that due to Proposition \ref{prop.KHK}, $\Kh\Gamma\vDash \Kh \alpha $ iff for any $\M$, $\M\vDash \Kh\Gamma$ implies $\M\vDash \Kh\alpha$.

$\Longrightarrow$: For any model $\M$ s.t.\ $\M\vDash\Kh\Gamma$, we have $W_\M\Vdash\Gamma$ by Lemma \ref{lem.modelstate}. Since $\Gamma\Vdash \alpha$, it follows that $W_\M\Vdash\alpha$. By Lemma \ref{lem.modelstate} again, we have $\M\vDash\Kh\alpha$. Therefore $\Kh\Gamma\vDash \Kh\phi$.

$\Longleftarrow$:  For any model $\M$ and state $s\subseteq W_{\M}$ s.t.\ $s\Vdash\Gamma$, we need to show $s\Vdash \alpha.$ If $s=\varnothing$, since the empty state supports all formulas in inquisitive semantics, it follows that $s\Vdash\alpha$. If $s$ is non-empty, by Proposition \ref{prop.satemodel}, $\M_s\vDash\Kh\Gamma$, thus $\M_s\vDash\Kh\alpha$. By Proposition \ref{prop.satemodel} again, $s\Vdash\alpha$. As a result, $\Gamma\Vdash \alpha$.\end{proof}

When $\Gamma=\emptyset$, it follows immediately that: 
\begin{corollary} \label{coro.equiv}
$\InqL=\InqKhL$.
\end{corollary}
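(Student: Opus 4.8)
The plan is to obtain the corollary as the $\Gamma=\emptyset$ instance of Theorem \ref{ent}; essentially no new work is needed. First I would unwind the two definitions to see that each side of the claimed identity is an empty-premise entailment. By definition $\InqL=\{\alpha\in\PL\mid\ \Vdash\alpha\}$, and $\Vdash\alpha$ is precisely the case $\emptyset\Vdash\alpha$ of the inquisitive entailment relation (for the empty premise set, the condition $\M,s\Vdash\Gamma$ is vacuous, so $\emptyset\Vdash\alpha$ says exactly that $\M,s\Vdash\alpha$ for every state $s$ in every model). Likewise $\InqKhL=\{\alpha\in\PL\mid\ \vDash\Kh\alpha\}$, and since $\Kh\emptyset=\emptyset$ we have $\vDash\Kh\alpha$ iff $\emptyset\vDash\Kh\alpha$ iff $\Kh\emptyset\vDash\Kh\alpha$, using that $\vDash\phi$ is defined as $\emptyset\vDash\phi$.

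Next I would invoke Theorem \ref{ent} with $\Gamma$ taken to be $\emptyset$: for every $\alpha\in\PL$, $\emptyset\Vdash\alpha$ iff $\Kh\emptyset\vDash\Kh\alpha$. Combining this with the reformulation above yields $\alpha\in\InqL$ iff $\alpha\in\InqKhL$ for every $\PL$-formula $\alpha$, and hence $\InqL=\InqKhL$.

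There is no genuine obstacle here beyond Theorem \ref{ent} itself (and, through it, Lemma \ref{lem.modelstate} and Proposition \ref{prop.satemodel}, where the real content lives). The only point requiring a moment's care is the bookkeeping that ``valid'' in each semantics coincides with ``entailed by the empty premise set'' — but the definitions in Section \ref{sec.pre} and Section \ref{sec.khlogic} make this explicit, so the corollary is immediate once the theorem is in hand.
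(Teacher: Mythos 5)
Your proposal is correct and matches the paper's own derivation: the paper obtains the corollary exactly as the $\Gamma=\emptyset$ instance of Theorem \ref{ent}, with the same bookkeeping that validity in each semantics is entailment from the empty premise set. No further comment is needed.
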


\section{Axiomatizing the full logic} \label{sec.ax}
We showed in the previous section that $\InqKhL=\InqL$, thus the valid $\Kh$-fragment can be axiomatized by the corresponding axioms for inquisitive logic. However, the more interesting question to answer is what the logic with respect to the full language $\DELKh$ is. In this section, we provide a complete axiomatization and also show that the full $\LInqKh$-language is equally expressive as the epistemic fragment with $\K$ modality only.  The conceptual advantage of our axiomatization is that all the axioms are epistemically intuitive, compared to the axioms of inquisitive logic. The axiomatization also shows the hidden dynamic-epistemic content of inquisitive logic in a clear syntactic manner.


\begin{center}
	\begin{tabular}{ll}
		\multicolumn{2}{c}{System $\SDELKh$}\\
		{\textbf{Axioms}}&\\
		\begin{tabular}[t]{lll}
			\TAUT & \text{Propositional tautologies}\\
			\DISTK &$\K (\phi\to\psi)\to (\K\phi\to \K\psi)$\\
			\AxTK&$\K \phi\to \phi$ \\
			\AxTransK&$\K \phi\to\K\K \phi$\\
			\AxEucK&$\neg \K \phi\to\K\neg\K \phi$\\
		    \DISTB &$\Box (\phi\to\psi)\to (\Box\phi\to \Box\psi)$\\
			\AxTB&$\Box \phi\to \phi$ \\
			\AxTransB&$\Box \phi\to\Box\Box \phi$\\
			$\PR$ &$\K\Box\varphi\to\Box\K\varphi$\\
			$\Per$ & $\alpha\to\Box\alpha$ \\
			$\Ver$  & $ \alpha\to\Diamond\Kh\alpha$  \\
			\end{tabular}&
		\begin{tabular}[t]{lll}
            $\KhK$ & $\Kh\alpha\to\K\alpha$\\
	        $\KKhp$& $\K p\to\Kh p$\\
	         $\Khbot$ & $\Kh\bot\lra \bot$\\
            $\KhD$ & $\Kh(\alpha\lor\beta)\leftrightarrow \Kh\alpha\lor\Kh\beta$\\
            $\KhC$ & $\Kh(\alpha\land\beta)\leftrightarrow \Kh\alpha\land\Kh\beta$\\
            $\KhI$ & $\Kh(\alpha\to\beta)\leftrightarrow \K\Box(\Kh\alpha\to\Kh\beta)$\\
            $\AxTransKh$ & $\Kh\alpha\to \K\Kh\alpha$\\
	        $\AxEucKh$& $\neg\Kh\alpha\to \K\neg\Kh\alpha$\\
            $\EU_k$ &$\alpha\land\!\bigwedge_{1\leq i\leq k}\hK(\alpha\land\alpha_i)\!\to\!\Diamond(\K\alpha\land\!\bigwedge_{1\leq i\leq k}\hK\alpha_i)$\\
            & ($k\in\mathbb{N} $,$\alpha_i\in\PL$ for $i\in\mathbb{N}$)\\
            where 
            & $p\in\Prop$, $\alpha,\beta\in \PL$,  $\varphi\in\LInqKh$
		\end{tabular}
	\end{tabular}
	
	\begin{tabular}{lclclc}
		\textbf{Rules:}\\
	\MP & $\dfrac{\varphi,\varphi\to\psi}{\psi}$~~&\NECK&~~ $\dfrac{\vdash\varphi}{\vdash\K\varphi}$ &~~ \NECB&~~$\dfrac{\vdash\varphi}{\vdash \Box \phi}$\\
		\end{tabular}
	\end{center}
	$\SFive$ axiom schemata\slash rules for $\K$ and $\SFour$ axiom schemata\slash rules for $\Box$ are expectable. $\PR$ is the axioms of \textit{perfect recall} often assumed in temporal epistemic logic and dynamic epistemic logic (cf. e.g., \cite{WC13}). $\Per$ says the truth values of propositional formulas do not change given informational updates. $\Ver$ says that propositional truth is eventually verifiable. 
	For any finite set of worlds in the current model, $\{\EU_k\mid k\in\mathbb{N}\}$ ensures the existence of an updated submodel that  contains exactly the current world and that set of worlds. 
	We will use $\EU_k$ to prove the reduction formula $\BKD$ in Appendix \ref{sec.app}.
	$\KhK$ says that know-how is stronger than know-that. $\KKhp$, $\Khbot$, $\KhD$, $\KhC$, $\KhI$ are the reduction axioms decoding the inquisitive formulas. Introspection schemata $\AxTransK$, $\AxTransKh$ and $\AxEucKh$ can be proved from the rest of the system. In particular, $\AxTransKh$ requires an inductive proof on the structure of $\alpha$. We include them for the sake of their intuitive meanings. 
	
\begin{theorem}[Soundness]\label{thm.soundness}
$\SDELKh$ is sound over the class of all epistemic models. 
\end{theorem}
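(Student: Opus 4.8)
The plan is to prove soundness in the standard way: show that every axiom schema of $\SDELKh$ is valid over the class of all epistemic models, and that each of the three rules preserves validity; then soundness follows by induction on the length of derivations. The rule part is immediate. $\MP$ preserves validity by the truth clause for $\to$, and $\NECK$, $\NECB$ preserve validity because the truth condition of $\K\varphi$ at $\M,w$ quantifies only over pointed models $\M,v$ with $v\in\M$, and that of $\Box\varphi$ at $\M,w$ only over $\M',w$ with $(\M',w)\subseteq(\M,w)$, so a $\varphi$ that holds at every pointed model holds at all of these. For the axioms I would organize the check into three blocks.

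The first block is the modal core, which is routine. $\TAUT$ is valid since the connectives outside the scope of $\Kh$ are interpreted classically. The accessibility relation underlying $\K$ is the total relation on $W_\M$, hence reflexive, transitive and Euclidean, so $\DISTK$, $\AxTK$, $\AxTransK$, $\AxEucK$ are the familiar $\SFive$ validities; the relation underlying $\Box$, namely $(\M',w)\subseteq(\M,w)$, is reflexive (a model is its own submodel) and transitive (submodel-of is transitive), so $\DISTB$, $\AxTB$, $\AxTransB$ hold. For $\PR$, assume $\M,w\vDash\K\Box\varphi$ and fix any $(\M',w)\subseteq(\M,w)$ and any $v\in\M'$; then $v\in\M$ and $(\M',v)\subseteq(\M,v)$, so $\M,v\vDash\Box\varphi$ gives $\M',v\vDash\varphi$, and since $v\in\M'$ was arbitrary, $\M',w\vDash\K\varphi$; hence $\M,w\vDash\Box\K\varphi$.

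The second block consists of axioms already proved valid earlier in the paper, so these cost essentially nothing: $\KhK$ is Proposition~\ref{prop.Kh2K}; $\KKhp$ follows from Proposition~\ref{prop.K2Kh}; $\Khbot$, $\KhD$, $\KhC$, $\KhI$ are the four items of Proposition~\ref{prop.khreduction}; $\AxTransKh$ and $\AxEucKh$ are given by Proposition~\ref{prop.KHK}; and $\Ver$ follows from Proposition~\ref{prop.vervalidy}. For $\Per$, observe that $\R(w,\alpha)$ depends only on $\V_\M(w)$ (Proposition~\ref{prop.rind}), which is unchanged when passing to any submodel $\M'$ containing $w$; combined with Proposition~\ref{prop.nept} this yields $\M,w\vDash\alpha\iff\M',w\vDash\alpha$ for every $(\M',w)\subseteq(\M,w)$, i.e.\ $\M,w\vDash\alpha\to\Box\alpha$.

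The third block is the one axiom requiring a genuine construction, $\EU_k$, which I expect to be the only real content of the proof. Suppose $\M,w\vDash\alpha\land\bigwedge_{1\le i\le k}\hK(\alpha\land\alpha_i)$; then for each $i$ there is some $v_i\in\M$ with $\M,v_i\vDash\alpha\land\alpha_i$. Let $\M'$ be the submodel of $\M$ with world set $\{w\}\cup\{v_i\mid 1\le i\le k\}$, so $(\M',w)\subseteq(\M,w)$. Every world of $\M'$ satisfies $\alpha$ in $\M$, and since $\alpha$ and the $\alpha_i$ are propositional, $\Per$ (equivalently, Propositions~\ref{prop.rind} and~\ref{prop.nept}) gives $\M',u\vDash\alpha$ for all $u\in\M'$ and $\M',v_i\vDash\alpha_i$ for each $i$. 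Hence $\M',w\vDash\K\alpha\land\bigwedge_{1\le i\le k}\hK\alpha_i$, so $\M,w\vDash\Diamond(\K\alpha\land\bigwedge_{1\le i\le k}\hK\alpha_i)$. The key idea — take the witnessing submodel to consist of exactly the current world together with one verifier for each conjunct — is the only subtle point; everything else is either standard $\SFive$/$\SFour$ reasoning or a direct appeal to the propositions listed above.
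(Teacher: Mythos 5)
Your proof is correct and follows essentially the same route as the paper: cite the earlier propositions for the $\Kh$-axioms, $\Ver$, and introspection, check the routine $\SFive$/$\SFour$ part, verify $\Per$ via invariance of the current world's valuation under submodels, and witness $\EU_k$ by the submodel consisting of the current world plus one verifier per conjunct — which is exactly the paper's argument. The only cosmetic difference is that you prove $\PR$ directly whereas the paper checks its dual form $\Diamond\hK\phi\to\hK\Diamond\phi$.
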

\begin{proof}
The validity of $\SFive$ axiom schemeta\slash rules for $\K$ and $\SFour$ axiom schemata\slash rules for $\Box$ are immediate. Based on Propositions \ref{prop.KHK}, \ref{prop.Kh2K}, \ref{prop.K2Kh}, \ref{prop.vervalidy}, and \ref{prop.khreduction}, we only need to check $\PR$ and $\Per$. For $\PR$,  it is easier to verify its dual form $\Diamond \hK\phi\to \hK\Diamond \phi:$ if there is a submodel where $\phi$ holds at some world  then there is a world and a submodel including it where $\phi$ holds. $\Per$ is valid since moving to any submodel does not change the valuation of the current world.   $\EU_k$ is valid because as long as the current world sees a (finite) set of worlds, we can take the submodel that contains both the current world and the set of worlds as the witness for $\Diamond$.  
\end{proof}

Note that we do not have the rule of uniform substitution for this system in general (recall Example \ref{ex.unisub}). Moreover, even the rule of monotonicity for $\Kh$ is not valid, e.g., $\vDash \neg\neg\alpha\to \alpha$ but $\not\vDash \Kh\neg\neg\alpha\to\Kh\alpha$. However, since we have $\TAUT$ and the modalities of $\Box$ and $\K$ are \textit{normal}, we can have the admissible rule $\rRE$ of  \textit{replacement of equals by equals}  if we treat all the $\Kh\alpha$ as atomic formulas when doing the substitutions: $$\rRE: \qquad  \dfrac{\vdash\varphi\lra\psi}{\vdash \chi[\phi\slash\psi]\lra \chi} \qquad \text{given that the substitution does not happen in the scope of $\Kh$.}$$

Due to $\NECK$, $\NECB$ and the axiom $\KhI$, another useful admissible rule is the syntactic analog of (the non-trivial side of) Proposition \ref{prop.validkh}: 
$$\RKhI: \qquad  \dfrac{\vdash\Kh \alpha\to\Kh\beta}{\vdash \Kh(\alpha\to\beta)} $$

To prove the completeness we need some extra provable (technical) formulas inspired by the reduction of the arbitrary announcement operator in \cite{balbiani2008} in the single-agent case.
\begin{proposition}\label{prop.reduaxbox}The following schemata are provable in $\SDELKh$, where $\alpha\in\LPL$ and $\phi\in \LDEL$ (i.e.,\ $\Kh$-free). 
\begin{center} \label{prop.box}
	\begin{tabular}{ll}
		\begin{tabular}[t]{lll}
		 \INV &$\Box\alpha\leftrightarrow \alpha$\\
		 \KINV & $\Box\K\alpha\leftrightarrow \K\alpha$\\
		 \hKINV	&$\Box\hK\alpha\leftrightarrow \alpha$\\
			\end{tabular}&
		\begin{tabular}[t]{lll}
				\BD	&$\Box(\alpha\lor\varphi)\leftrightarrow \alpha\lor\Box\varphi$ \\
		 \BKD & $\Box(\hK\alpha\lor\K\alpha_1\lor\dots\lor\K\alpha_n)\leftrightarrow \alpha\lor\K(\alpha\lor\alpha_1)\lor\dots\lor\K(\alpha\lor\alpha_n)$\\
		\end{tabular}
	\end{tabular}
	\end{center}
\end{proposition}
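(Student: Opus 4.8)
The plan is to derive the five schemata one at a time, in the order $\INV$, $\KINV$, $\hKINV$, $\BD$, $\BKD$, since each later item leans on the earlier ones. The workhorses throughout are the $\Box$-axioms $\AxTB$, $\AxTransB$, the interaction axioms $\PR$, $\Per$, $\Ver$, $\KhK$, and the $\EU_k$ schema; everything else is routine normal-modal bookkeeping, namely the $\SFive$ machinery for $\K$, the $\SFour$ machinery for $\Box$, monotonicity of $\Diamond$, the standard principle $\Box\psi\land\Diamond\chi\to\Diamond(\psi\land\chi)$, and distribution of $\K$ over conjunctions.

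For $\INV$: $\Box\alpha\to\alpha$ is $\AxTB$ and $\alpha\to\Box\alpha$ is $\Per$. For $\KINV$: $\Box\K\alpha\to\K\alpha$ is again $\AxTB$; conversely, from $\vdash\alpha\to\Box\alpha$ ($\Per$) we get $\vdash\K\alpha\to\K\Box\alpha$ via $\NECK$ and $\DISTK$, and then $\K\Box\alpha\to\Box\K\alpha$ is $\PR$. For $\hKINV$: the easy direction $\alpha\to\Box\hK\alpha$ comes from $\vdash\alpha\to\hK\alpha$ (the contrapositive of $\AxTK$ applied to $\neg\alpha$) followed by $\NECB$, $\DISTB$ and $\Per$; for the hard direction $\Box\hK\alpha\to\alpha$, instantiate $\Ver$ at $\neg\alpha$ to get $\vdash\neg\alpha\to\Diamond\Kh\neg\alpha$, then apply $\KhK$ and monotonicity of $\Diamond$ to obtain $\vdash\neg\alpha\to\Diamond\K\neg\alpha$, which, after unfolding $\hK$ and $\Diamond$, is precisely the contrapositive of $\Box\hK\alpha\to\alpha$.

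For $\BD$: the direction $\alpha\lor\Box\varphi\to\Box(\alpha\lor\varphi)$ follows from $\Per$ on the $\alpha$-disjunct together with monotonicity of $\Box$; the converse is proved contrapositively, using $\neg\alpha\to\Box\neg\alpha$ ($\Per$ on $\neg\alpha$) and then $\Box\neg\alpha\land\Diamond\neg\varphi\to\Diamond(\neg\alpha\land\neg\varphi)=\neg\Box(\alpha\lor\varphi)$. The crux is $\BKD$. Its backward direction splits over the disjuncts of the right-hand side: from $\alpha$ use $\hKINV$ ($\alpha\to\Box\hK\alpha$) and weaken under $\Box$; from $\K(\alpha\lor\alpha_i)$ first establish the $\SFive$-theorem $\K(\alpha\lor\alpha_i)\to\hK\alpha\lor\K\alpha_i$ (using the tautology $(\alpha\lor\alpha_i)\to(\neg\alpha\to\alpha_i)$, $\DISTK$, and the definition of $\hK$), then apply $\NECB$, $\DISTB$, $\KINV$, and weaken under $\Box$.

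The forward direction of $\BKD$ is where the real work lies, and the only place $\EU_k$ is used. Arguing contrapositively, assume $\neg\alpha\land\bigwedge_{1\leq i\leq n}\neg\K(\alpha\lor\alpha_i)$; rewriting each $\neg\K(\alpha\lor\alpha_i)$ as $\hK(\neg\alpha\land\neg\alpha_i)$ turns the antecedent into exactly the left-hand side of the instance of $\EU_n$ obtained by replacing $\alpha$ with $\neg\alpha$ and each $\alpha_i$ with $\neg\alpha_i$ (all still in $\PL$). That instance yields $\Diamond\bigl(\K\neg\alpha\land\bigwedge_{1\leq i\leq n}\hK\neg\alpha_i\bigr)$; since $\K\neg\alpha\to\neg\hK\alpha$ and $\hK\neg\alpha_i\to\neg\K\alpha_i$ (the latter being the contrapositive of $\K\alpha_i\to\K\neg\neg\alpha_i$), each conjunct inside this $\Diamond$ refutes one disjunct of $\hK\alpha\lor\bigvee_{1\leq i\leq n}\K\alpha_i$, so monotonicity of $\Diamond$ gives $\neg\Box\bigl(\hK\alpha\lor\bigvee_{1\leq i\leq n}\K\alpha_i\bigr)$, the contrapositive of the claimed implication. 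I expect this last step --- lining up the $\EU_n$ witness against the simultaneous failure of every disjunct under $\Box$ --- to be the only genuinely delicate point; $\EU_n$ is precisely the syntactic device providing the finite ``witness submodel'' built on $\{w,v_1,\dots,v_n\}$ that the semantic reading calls for, and the remainder is normal-modal routine.
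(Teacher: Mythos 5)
Your proposal is correct and follows essentially the same route as the paper's own proof: \INV\ and \KINV\ from \Per, \AxTB\ and \PR; \BD\ via \Per\ applied to $\neg\alpha$ plus routine normal reasoning for $\Box$; and the hard direction of \BKD\ by instantiating $\EU_n$ at $\neg\alpha,\neg\alpha_1,\dots,\neg\alpha_n$ and taking the contrapositive, exactly as in the appendix. The only (harmless) divergence is that you prove \hKINV\ directly from \Ver\ and \KhK, whereas the paper obtains it as the $n=0$ instance of \BKD.
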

\begin{proof}
    \INV\ is the combination of  $\Per$ and $\AxTB$. \KINV\ is proved from $\Per$ and $\PR$. \hKINV\ is a special instance of $\BKD$ when $n=0$. We include the (tedious) proofs of the other two formulas in Appendix \ref{sec.app}.    
\end{proof}

Recall that $\LDEL$ is the $\Kh$-free fragment of $\LInqKh$, and $\LEL$ is the $\Box$-free fragment of $\LDEL$. By the following lemmata, we show that each $\LInqKh$-formula is provably equivalent to an $\LEL$-formula. 
\begin{lemma} \label{lem.redkh}
Each $\LInqKh$-formula is provably equivalent to a $\LDEL$ formula in $\SDELKh$. 
\end{lemma}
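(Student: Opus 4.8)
The plan is to prove Lemma~\ref{lem.redkh} by induction on the structure of the $\LInqKh$-formula $\phi$, using the reduction axioms $\KKhp$, $\Khbot$, $\KhD$, $\KhC$, $\KhI$ (together with Proposition~\ref{prop.K2Kh}, i.e.\ $\KKhp$) to strip away $\Kh$-operators. The measure I would decrease is not the plain length of $\phi$ but a refined complexity measure that accounts for the fact that $\KhI$ replaces $\Kh(\alpha\to\beta)$ by $\K\Box(\Kh\alpha\to\Kh\beta)$, which still contains $\Kh$-subformulas but applied to strictly shorter $\PL$-formulas. So the key bookkeeping step is to define, for each $\LInqKh$-formula, a weight (say, the multiset of sizes of maximal $\PL$-subformulas occurring immediately under a $\Kh$, ordered by the multiset order, with ties broken by overall length) and check that each reduction axiom, read left-to-right, strictly lowers it.

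Concretely, I would first handle the innermost $\Kh$-occurrences: given $\Kh\alpha$ with $\alpha\in\PL$, recurse on $\alpha$. If $\alpha=p$ or $\alpha=\bot$, use $\KKhp$ together with $\KhK$ (to get $\Kh p\lra\K p$) and $\Khbot$ to rewrite $\Kh\alpha$ into a $\Kh$-free formula. If $\alpha=\beta\lor\gamma$, $\beta\land\gamma$, apply $\KhD$ or $\KhC$ and then the rule $\rRE$ to push the rewriting inward, reducing to $\Kh\beta$, $\Kh\gamma$ with strictly smaller arguments. If $\alpha=\beta\to\gamma$, apply $\KhI$ to replace $\Kh(\beta\to\gamma)$ by $\K\Box(\Kh\beta\to\Kh\gamma)$; although $\Kh$ still appears, it is now applied only to the proper subformulas $\beta,\gamma$, so the refined weight drops. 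Iterating, every $\Kh\alpha$ is provably equivalent to a Boolean combination of $\K$'s and $\Box$'s applied to $\PL$-formulas, i.e.\ to an $\LDEL$-formula. For a general $\phi$ I would peel off $\Kh$-subformulas one at a time (choosing one of minimal $\PL$-argument among all $\Kh$-subformulas, or an innermost one), use $\rRE$ to substitute the equivalent $\Kh$-free formula back into $\phi$ — $\rRE$ is applicable precisely because the replacement does not happen inside the scope of another $\Kh$, and $\K,\Box$ are normal so congruence holds — and repeat until no $\Kh$ remains, which terminates by the weight argument.

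I would organize the write-up as: (i) state the complexity measure and verify monotonicity of each rewrite; (ii) an inner induction proving $\vdash\Kh\alpha\lra\chi_\alpha$ for some $\Kh$-free $\chi_\alpha$, by induction on $\alpha\in\PL$, invoking $\KhK+\KKhp$, $\Khbot$, $\KhD$, $\KhC$, $\KhI$ and $\rRE$; (iii) the outer induction on $\phi\in\LInqKh$ that repeatedly applies step (ii) via $\rRE$ to eliminate all $\Kh$'s. The main obstacle is getting the termination/complexity argument exactly right: $\KhI$ does not reduce formula length, so a naive induction on length fails, and one must be careful that after substituting $\K\Box(\Kh\beta\to\Kh\gamma)$ for $\Kh(\beta\to\gamma)$ the new $\Kh$-subformulas genuinely have smaller arguments and that no new $\Kh$ of large argument is created elsewhere — this is why the multiset-of-argument-sizes order (well-founded by Dershowitz--Manna) is the natural choice. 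A secondary subtlety is justifying each use of $\rRE$: its side condition forbids substitution inside $\Kh$, so I must always eliminate $\Kh$-subformulas from the inside out (or pick a $\Kh\alpha$ none of whose strict $\Kh$-superformulas have yet to be processed), ensuring the equivalence being substituted is genuinely at a $\Kh$-free position. Everything else is routine propositional/normal-modal reasoning packaged by $\rRE$.
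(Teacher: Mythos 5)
Your proposal is correct and follows essentially the same route as the paper's (sketched) proof: repeatedly apply $\Khbot$, $\KhC$, $\KhD$, $\KhI$ via $\rRE$ to reduce every $\Kh$-subformula down to formulas of the form $\Kh p$, and then eliminate these by $\vdash \Kh p \lra \K p$ (from $\KhK$ and $\KKhp$); your weight/multiset bookkeeping simply makes explicit the termination the paper leaves as ``not hard to show.'' One small simplification: since $\Kh$ only takes $\PL$-arguments, no $\Kh$ can ever occur inside the scope of another $\Kh$, so the $\rRE$ side condition is automatically satisfied and a plain structural induction on $\alpha\in\LPL$ (your step (ii)) already suffices without the Dershowitz--Manna order.
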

\begin{proof}
Note that with the help of $\rRE$, we can repeatedly apply Axioms $\Khbot, \KhC, \KhD, \KhI$ step by step to reduce $\Kh\alpha$ to simpler $\Kh$ formulas. It is not hard to show that eventually, all the $\Kh$-formula can be reduced to some formulas with $\Kh p$ only. By Axioms $\KhK$ and $\KKhp$, we have $\vdash \Kh p\lra \K p$, which will eventually eliminate any $\Kh$ modality completely. 
\end{proof}

\begin{lemma}\label{lem.redbox}
Each $\LDEL$-formula is provably equivalent to an $\LEL$ formula in $\SDELKh$. 
\end{lemma}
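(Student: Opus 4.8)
The plan is to eliminate occurrences of $\Box$ one at a time, working on an innermost occurrence, using the reduction-style equivalences already collected in Proposition \ref{prop.reduaxbox} together with the normal modal behaviour of $\Box$ and $\rRE$. First I would make precise what ``innermost'' means: since we are in the $\LDEL$-fragment (no $\Kh$), a formula is built from propositional atoms, $\bot$, the Boolean connectives, $\K$, and $\Box$. Pick a subformula of the shape $\Box\psi$ such that $\psi$ is $\Box$-free, i.e.\ $\psi\in\LEL$. The goal is to rewrite $\Box\psi$ into a provably equivalent $\LEL$-formula; then $\rRE$ (valid here because there is no $\Kh$ in sight, so the side condition is vacuous) lets us substitute it back, strictly decreasing the number of $\Box$'s, and we induct on that number.

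The core is a sub-induction on the structure of the $\Box$-free $\psi$, pushing $\Box$ inward until it disappears. The cases are driven exactly by Proposition \ref{prop.reduaxbox} and the $\SFour$-normality of $\Box$: for $\psi=\alpha\in\LPL$ use $\INV$ ($\Box\alpha\lra\alpha$); for $\psi=\K\chi$ we first note (by the inner IH, pushing $\Box$ through $\chi$ if needed, or directly since $\K\chi$ with $\chi\in\LEL$) that it suffices to treat $\Box\K\chi$, and here $\KINV$-style reasoning via $\Per$ and $\PR$ gives $\Box\K\chi\lra\K\chi$ once $\chi$ is handled; for $\psi=\psi_1\land\psi_2$ use $\DISTB$ and $\NECB$ to get $\Box(\psi_1\land\psi_2)\lra\Box\psi_1\land\Box\psi_2$ and recurse; for $\psi=\psi_1\to\psi_2$ similarly distribute. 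The genuinely delicate case is disjunction: $\Box(\psi_1\lor\psi_2)$ does \emph{not} distribute over $\lor$ in general, which is exactly why $\BD$ and $\BKD$ were proved. The strategy there is to first normalise $\psi$ (a $\Box$-free, hence $\LEL$-, formula) into a shape to which $\BD$ or $\BKD$ applies: using classical propositional reasoning inside $\TAUT$ and the fact that $\K$ is an $\SFive$ modality, bring the disjunction into a form $\alpha\lor\varphi$ (with $\alpha\in\LPL$) for $\BD$, or the special shape $\hK\alpha\lor\K\alpha_1\lor\dots\lor\K\alpha_n$ for $\BKD$; more generally one reduces an arbitrary $\LEL$-formula to a disjunction of conjuncts each of which is either propositional, a $\K\chi$, or an $\hK\chi$, exploiting $\SFive$-introspection to collapse nested $\K$'s, and then iterates $\BD$/$\BKD$. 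After $\Box$ has been driven through to the atoms and killed, we have a $\Box$-free equivalent, i.e.\ an $\LEL$-formula.

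The main obstacle I expect is precisely this disjunctive normal-form step: one must argue, inside the proof system, that every $\Box$-free formula is provably equivalent to something of the restricted syntactic form on which $\BD$ and $\BKD$ can be brought to bear, and that iterating these does terminate. This is the part that is "tedious" in the same sense the authors flag for the appendix proofs of $\BD$ and $\BKD$ themselves; the honest write-up would either invoke a standard $\SFive$ normal-form lemma for $\LEL$ (disjunctions of conjunctions of literals, $\K$-formulas, and $\hK$-formulas, with at most one layer of $\K$/$\hK$ by introspection) or do the push-through by a double induction with an explicit well-founded measure (number of $\Box$'s, then modal depth of the argument, then formula size). Everything else — the $\land$, $\to$, $\K$, and propositional cases, and the use of $\rRE$ to recombine — is routine given $\TAUT$, $\DISTB$, $\NECB$, and Proposition \ref{prop.reduaxbox}.
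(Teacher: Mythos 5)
Your overall strategy --- eliminate innermost occurrences of $\Box$ by first normalising the $\Box$-free argument and then applying $\BD$/$\BKD$, recombining with $\rRE$ --- is the same as the paper's (which follows Balbiani et al.), but three of your intermediate steps are wrong as stated, and one of them is the crux. First, $\Box$ does not distribute over implication: $\Box(\psi_1\to\psi_2)\lra(\Box\psi_1\to\Box\psi_2)$ is not valid (only the left-to-right direction, which is $\DISTB$, holds), so the case $\psi=\psi_1\to\psi_2$ cannot be handled by ``similarly distribute''. Second, $\Box\K\chi\lra\K\chi$ fails for general $\chi\in\LEL$: take $\chi=\hK p$ and a two-world model with $p$ true at exactly one world; at the $\neg p$-world, $\K\hK p$ holds, but the singleton submodel refutes $\hK p$, so $\Box\K\hK p$ fails --- this is precisely why the paper's $\KINV$ is restricted to $\alpha\in\LPL$. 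Third, and most importantly, the normal form you invoke is oriented the wrong way: you propose a disjunction of conjunctions of literals, $\K$- and $\hK$-formulas, but $\Box$ does not push through disjunctions, and $\BD$/$\BKD$ respectively only peel off a propositional disjunct and reduce $\Box$ applied to the very specific clause shape $\hK\alpha_0\lor\K\alpha_1\lor\cdots\lor\K\alpha_n$. A formula such as $\Box\bigl((\K p\land\hK q)\lor(\K r\land\hK s)\bigr)$ is not touched by any of your tools until it is converted further, so the ``iterate $\BD$/$\BKD$'' step would stall exactly where the real work lies.

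The fix (and the paper's actual route) is the dual normal form from Meyer and van der Hoek: every single-agent $\SFive$ formula is provably equivalent to a \emph{conjunction} of clauses $\alpha\lor\hK\alpha_0\lor\K\alpha_1\lor\cdots\lor\K\alpha_n$ with $\alpha,\alpha_0,\dots,\alpha_n\in\LPL$ (several $\hK$'s in a clause collapse to one since $\hK\beta\lor\hK\gamma\lra\hK(\beta\lor\gamma)$, and nested modalities collapse by introspection). Since $\Box$ is normal, it distributes over the outer conjunction via $\DISTB$ and $\NECB$, and then each $\Box$-clause is reduced by a single application of $\BD$ (peeling off $\alpha$) followed by a single application of $\BKD$, yielding $\alpha\lor\alpha_0\lor\K(\alpha_0\lor\alpha_1)\lor\cdots\lor\K(\alpha_0\lor\alpha_n)\in\LEL$; no further iteration or termination measure is needed beyond your outer induction on the number of $\Box$'s. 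With that correction your plan goes through; as written, the DNF step is where it fails.
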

\begin{proof}
Here we follow the idea in \cite{balbiani2008}. We say a formula $\varphi\in\LEL$ is in normal form if it is a conjunction of disjunctions of the form $\alpha\lor\hK\alpha_0\lor\K\alpha_1\lor\cdots\lor\K\alpha_n$ where $\alpha, \alpha_0,\cdots,\alpha_n\in\LPL$. Every formula in single-agent $\SFive$ is equivalent to a formula in normal form \cite{Meyer1995}. So we only need to prove that $\Box(\alpha\lor\hK\alpha_0\lor\K\alpha_1\lor\cdots\lor\K\alpha_n)$ is provably equivalent to an $\LEL$ formula in $\SDELKh$. Then we can eliminate the $\Box$ step by step from the innermost ones that do not have any $\Box$ in its scope. 

By $\BD$, $\Box(\alpha\lor\hK\alpha_0\lor\K\alpha_1\lor\cdots\lor\K\alpha_n)$ is equivalent to $\alpha\lor\Box(\hK\alpha_0\lor\K\alpha_1\lor\cdots\lor\K\alpha_n)$, and by $\BKD$, $\alpha\lor\Box(\hK\alpha_0\lor\K\alpha_1\lor\cdots\lor\K\alpha_n)$ is equivalent to $\alpha\lor\alpha_0\lor\K(\alpha_0\lor\alpha_1)\lor\cdots\lor\K(\alpha_0\lor\alpha_n)$, which is a formula in $\LEL$. Note that $\BD$ and $\BKD$ are provable in $\SDELKh$ as shown by Proposition \ref{prop.reduaxbox}. 

\end{proof}

Lemmata \ref{lem.redkh} and \ref{lem.redbox} together with Theorem \ref{thm.soundness} also tell us about the expressivity of $\LInqKh$.
\begin{theorem}[Expressivity]\label{thm.exp}
$\LInqKh$ is equally expressive as $\LEL$ over epistemic models. 
\end{theorem}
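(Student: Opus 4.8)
The plan is to establish the two inclusions of expressivity by combining the reduction lemmas already at hand. Recall that two logical languages are equally expressive over a class of models when every formula of one is semantically equivalent (over that class) to some formula of the other, and vice versa. Here $\LEL \subseteq \LDEL \subseteq \LInqKh$ syntactically, so one direction is immediate: every $\LEL$-formula is already a $\LInqKh$-formula, hence trivially equivalent to itself over epistemic models. The content is therefore entirely in the converse direction, namely that every $\LInqKh$-formula is equivalent over epistemic models to some $\LEL$-formula.

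For the converse I would argue as follows. Take an arbitrary $\varphi \in \LInqKh$. By Lemma \ref{lem.redkh}, there is a $\LDEL$-formula $\psi$ (i.e.\ a $\Kh$-free formula) with $\vdash_{\SDELKh} \varphi \lra \psi$. By Lemma \ref{lem.redbox} applied to $\psi$, there is a $\LEL$-formula $\chi$ (i.e.\ a $\Box$-free and $\Kh$-free formula) with $\vdash_{\SDELKh} \psi \lra \chi$. Chaining these via propositional reasoning in $\SDELKh$ gives $\vdash_{\SDELKh} \varphi \lra \chi$. Now invoke Theorem \ref{thm.soundness} (Soundness of $\SDELKh$ over the class of all epistemic models) to conclude $\vDash \varphi \lra \chi$, i.e.\ $\varphi$ and $\chi$ are semantically equivalent over every epistemic model. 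Since $\chi \in \LEL$, this is exactly what we need. Combined with the trivial direction, $\LInqKh$ and $\LEL$ are equally expressive over epistemic models.

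I do not anticipate a genuine obstacle here, since the real work has been offloaded into Lemmata \ref{lem.redkh} and \ref{lem.redbox} and into Proposition \ref{prop.reduaxbox}; the theorem is essentially a bookkeeping corollary. The one point worth stating carefully is \emph{why provable equivalence yields semantic equivalence}: this is precisely the role of Soundness, and it is also why the theorem statement is hedged with ``over epistemic models'' (the relevant model class for which $\SDELKh$ is sound). A secondary subtlety, if one wanted to be fully explicit, is that ``equally expressive'' is a statement about the \emph{existence} of an equivalent formula for each formula of the other language — it does not claim a syntactic inclusion — so one should phrase the trivial direction as ``each $\LEL$-formula is (trivially) equivalent to itself, viewed as a $\LInqKh$-formula'' rather than appealing to a syntactic containment. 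With these caveats noted, the proof is the two-line chaining described above.
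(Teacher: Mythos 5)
Your proof is correct and matches the paper's own argument exactly: the paper likewise obtains the theorem by chaining Lemma \ref{lem.redkh} and Lemma \ref{lem.redbox} to get a provably equivalent $\LEL$-formula and then invoking soundness (Theorem \ref{thm.soundness}) to turn provable equivalence into semantic equivalence over epistemic models, the converse direction being trivial since $\LEL$ is a fragment of $\LInqKh$. No gaps to report.
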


In particular, we have the following corollary. 

\begin{corollary}
For each $\alpha\in \LPL$, there is an epistemic formula $\phi\in \LEL$ such that $$\M,W_\M \Vdash \alpha \iff \M,w\vDash \Kh\alpha \iff \M,w\vDash \phi.$$ 
\end{corollary}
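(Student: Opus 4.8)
The plan is to read the two biconditionals off results already in hand: the first is literally Lemma~\ref{lem.modelstate}, and the second comes from the $\Kh$- and $\Box$-elimination lemmata together with soundness. So there is essentially nothing new to prove; the work is just assembling the pieces in the right order.

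First I would invoke Lemma~\ref{lem.modelstate}, which for every $\alpha\in\LPL$ and every pointed model $\M,w$ already gives $\M,w\vDash\Kh\alpha \iff \M,W_\M\Vdash\alpha$; this disposes of $\M,W_\M\Vdash\alpha \iff \M,w\vDash\Kh\alpha$. For the second biconditional, note that $\Kh\alpha$ is itself a $\LInqKh$-formula. By Lemma~\ref{lem.redkh} it is provably equivalent in $\SDELKh$ to some $\psi\in\LDEL$ --- intuitively one uses $\rRE$ to drive $\Khbot,\KhC,\KhD,\KhI$ inward until every $\Kh$ sits on an atom, and then replaces each $\Kh p$ by $\K p$ via $\KhK$ and $\KKhp$. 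By Lemma~\ref{lem.redbox}, $\psi$ is in turn provably equivalent to some $\phi\in\LEL$ --- put $\psi$ into the single-agent $\SFive$ normal form modulo $\Box$ and strip off the $\Box$s from the innermost occurrences outward using $\BD$ and $\BKD$, both provable by Proposition~\ref{prop.reduaxbox}. Hence $\vdash\Kh\alpha\lra\phi$, and by Soundness (Theorem~\ref{thm.soundness}) this provable equivalence is valid over the class of all epistemic models, so $\M,w\vDash\Kh\alpha \iff \M,w\vDash\phi$ for every pointed model $\M,w$. Chaining the two equivalences yields the corollary. Equivalently, the mere existence of such a $\phi$ is the Expressivity Theorem~\ref{thm.exp} specialized to the formula $\Kh\alpha$.

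The only delicate point --- and the closest thing to an obstacle --- is making sure the two rewriting procedures genuinely terminate and land in the intended fragment, but this is already guaranteed by the cited lemmata. For Lemma~\ref{lem.redkh}, termination is by induction on the $\LPL$-complexity of the argument of $\Kh$: every rewrite strictly lowers that complexity (the $\KhI$ step trades $\Kh(\alpha\to\beta)$ for $\Kh\alpha,\Kh\beta$ inside extra $\K\Box$ layers, but those fresh $\Kh$-occurrences carry strictly simpler arguments), so the recursion bottoms out at $\Kh p$, which is eliminated outright. For Lemma~\ref{lem.redbox} one leans on the existence of the $\SFive$ normal form and on the provability of $\BD$ and $\BKD$ to peel $\Box$ off the innermost occurrences first. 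Since all of these ingredients are established earlier in the paper, the corollary follows immediately once they are applied in the stated order.
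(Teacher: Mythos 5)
Your proposal is correct and follows essentially the same route as the paper: the corollary is stated there as an immediate consequence of the Expressivity Theorem~\ref{thm.exp} (i.e., Lemmata~\ref{lem.redkh} and~\ref{lem.redbox} plus Soundness, Theorem~\ref{thm.soundness}) combined with Lemma~\ref{lem.modelstate} for the support-versus-$\Kh$ equivalence, which is exactly how you assemble it.
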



As shown in the literature, a simple direct translation from inquisitive logic to epistemic logic will be given in Section  \ref{sec.related work}. 

\begin{theorem}[Completeness]
$\SDELKh$ is complete over the class of all epistemic models. 
\end{theorem}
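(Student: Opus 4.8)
The plan is the standard \emph{reduction-axioms} strategy for dynamic epistemic logics: peel off the non-standard modalities using the provable equivalences already established, and then invoke completeness of ordinary single-agent $\SFive$ for the residual epistemic language. Soundness is Theorem \ref{thm.soundness}, so what remains is to show that every $\LInqKh$-formula valid over all epistemic models is provable in $\SDELKh$.

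Let $\phi\in\LInqKh$ be valid. By Lemma \ref{lem.redkh}, $\phi$ is provably equivalent in $\SDELKh$ to some $\Kh$-free formula $\psi\in\LDEL$; by Lemma \ref{lem.redbox}, $\psi$ is provably equivalent to some $\Box$-free and $\Kh$-free formula $\chi\in\LEL$. Hence $\vdash_{\SDELKh}\phi\lra\chi$, and since $\SDELKh$ is sound, $\phi\lra\chi$ is valid, so $\chi$ is a valid $\LEL$-formula over all epistemic models. Next I would observe that validity of an $\LEL$-formula over the class of epistemic models (i.e.\ $\SFive$ Kripke models whose accessibility relation is total) coincides with validity over the class of \emph{all} $\SFive$ Kripke models: the truth value of an $\LEL$-formula at a world of an $\SFive$ model depends only on the equivalence class of that world, and every such class, taken with the universal relation, is an epistemic model in our sense. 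Thus $\chi$ is $\SFive$-valid.

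By completeness of single-agent $\SFive$ for $\LEL$ (see, e.g., \cite{Meyer1995}), $\chi$ is derivable in the usual $\SFive$ calculus. Inspecting that calculus, it uses only $\TAUT$, $\DISTK$, $\AxTK$, $\AxTransK$, $\AxEucK$ together with $\MP$ and $\NECK$, all of which are present in $\SDELKh$ (and the relevant schema instances lie within $\LEL\subseteq\LInqKh$). Therefore $\vdash_{\SDELKh}\chi$, and combining this with $\vdash_{\SDELKh}\phi\lra\chi$ via $\MP$ yields $\vdash_{\SDELKh}\phi$, completing the argument. (Equivalently, this is immediate once one packages Lemmata \ref{lem.redkh} and \ref{lem.redbox} as the ``reduction'' half of Theorem \ref{thm.exp} and adds $\SFive$-completeness for the target fragment.)

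The only step requiring genuine care — rather than bookkeeping — is the passage from ``$\chi$ valid over total-relation epistemic models'' to ``$\chi$ is $\SFive$-valid'': one must check that restricting to epistemic models with the universal relation does not validate any additional $\LEL$-formulas. This is routine, since every pointed $\SFive$ model is $\LEL$-equivalent to the generated submodel on the equivalence class of its point, where the relation is total; but it is the one place where the identification of our model class with $\SFive$ Kripke frames must be invoked explicitly. Everything else follows from the reduction lemmata, which were engineered precisely so that $\SDELKh$ contains a full copy of $\SFive$ for $\K$ once $\Kh$ and $\Box$ have been eliminated.
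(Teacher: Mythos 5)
Your proof is correct and follows essentially the same route as the paper: reduce $\phi$ to a provably equivalent $\LEL$-formula via Lemmata \ref{lem.redkh} and \ref{lem.redbox}, transfer validity by soundness, invoke single-agent $\SFive$ completeness, note that the $\SFive$ calculus is contained in $\SDELKh$, and close the loop with the provable equivalence. Your explicit check that validity over total-relation epistemic models coincides with $\SFive$-validity is a detail the paper leaves implicit, but it does not change the argument.
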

\begin{proof}
  We prove the completeness by translating each  $\LInqKh$-formula $\phi$ into an equivalent $\LEL$-formula $\phi'$ and follow the strategy below.  
  $$\vDash\phi\stackrel{\text{expressive equivalence}}{\implies}\vDash \phi'\stackrel{\text{completeness of }\SFive }{\implies}\vdash_{\SFive }\phi'\stackrel{\text{\SFive $\subseteq \SDELKh$}}{\implies}\vdash_{\SDELKh} \phi'\stackrel{\text{provable equivalence}}{\implies}\vdash_{\SDELKh}\phi$$  
\end{proof}
Strong completeness can be obtained in the similar  process w.r.t. a given assumption set $\Gamma$ (cf. e.g., \cite{baltag2016logic}). Decidability of $\SDELKh$ immediately follows from the proofs of the above theorems and the decidability of $\SFive$ . 
\begin{corollary}
$\SDELKh$ is decidable. 
\end{corollary}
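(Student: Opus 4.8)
The plan is to upgrade the translation used in the completeness proof into an effective decision procedure. Given an arbitrary $\LInqKh$-formula $\phi$, the proofs of Lemmata~\ref{lem.redkh} and~\ref{lem.redbox} already describe, step by step, how to rewrite $\phi$ into a provably equivalent $\LEL$-formula $\phi'$; what remains is to observe that this rewriting is genuinely algorithmic and terminating. First I would treat the $\Kh$-elimination: repeatedly apply $\Khbot,\KhC,\KhD,\KhI$ (with $\rRE$) to innermost $\Kh$-subformulas. Assigning to a formula the sum of the sizes of the $\PL$-arguments of all its $\Kh$-occurrences gives a measure that strictly decreases under each of these rewrites — for instance $\Kh(\alpha\to\beta)$, whose argument has size $s$, becomes $\K\Box(\Kh\alpha\to\Kh\beta)$ whose two $\Kh$-arguments have total size $<s$, and $\Khbot$ simply removes a $\Kh$-occurrence. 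Hence after finitely many steps every $\Kh$-occurrence has the form $\Kh p$ with $p\in\Prop$, and a final pass using $\vdash\Kh p\lra\K p$ (from $\KhK$ and $\KKhp$) eliminates them all, landing in $\LDEL$.

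Next I would eliminate $\Box$ from the resulting $\LDEL$-formula by working on an innermost $\Box\psi$, where $\psi$ is already $\Box$-free and hence lies in $\LEL$: effectively rewrite $\psi$ into the single-agent $\SFive$ normal form of~\cite{Meyer1995}, distribute $\Box$ over the outer conjunction (valid since $\Box$ is normal), and apply $\BD$ and $\BKD$ (provable by Proposition~\ref{prop.reduaxbox}) to replace each conjunct by a $\Box$-free equivalent; this strictly decreases the number of $\Box$-occurrences and introduces no new ones. Iterating produces $\phi'\in\LEL$ with $\vdash_{\SDELKh}\phi\lra\phi'$. Now I would invoke the results already in hand — soundness (Theorem~\ref{thm.soundness}) and completeness of $\SDELKh$, the inclusion $\SFive\subseteq\SDELKh$, and the classical completeness and decidability of single-agent $\SFive$ — to chain the equivalences
$$\vdash_{\SDELKh}\phi\iff\vdash_{\SDELKh}\phi'\iff\ \vDash\phi'\iff\vdash_{\SFive}\phi'.$$
Since $\phi'$ is computed from $\phi$ by the procedure above and $\{\psi\in\LEL\mid\vdash_{\SFive}\psi\}$ is decidable, the algorithm ``compute $\phi'$, then test $\vdash_{\SFive}\phi'$'' decides $\vdash_{\SDELKh}\phi$, and by completeness it also decides validity over all epistemic models.

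I expect the only real obstacle to be the bookkeeping that makes the rewriting \emph{terminate}, rather than anything deep: one must pin down well-founded measures that survive the fact that $\KhI$ replaces one $\Kh$-formula by two (handled by the total-$\Kh$-argument-size measure) and the fact that the normal-form conversion inside the $\Box$-elimination could in principle increase nesting depth (harmless, since it reintroduces no $\Box$, and it is the count of $\Box$-occurrences, processed innermost-first, that serves as the measure). Everything else — the reduction axioms, soundness, completeness of $\SDELKh$, and decidability of $\SFive$ — is already available, so the argument is essentially a packaging of the completeness proof as a terminating algorithm.
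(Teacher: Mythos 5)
Your proposal is correct and follows essentially the same route as the paper, which derives decidability directly from the effective $\Kh$- and $\Box$-elimination procedures of Lemmata \ref{lem.redkh} and \ref{lem.redbox} together with the decidability of single-agent $\SFive$. The extra bookkeeping you supply (the total-$\Kh$-argument-size measure and the innermost-first $\Box$-count) just makes explicit the termination that the paper treats as immediate from those proofs.
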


To demonstrate the power and the use of our system, we show that the following important axioms can be proved based on our intuitive epistemic axioms. In particular, compared to the semantic validity of $\DNkh$ shown in Proposition \ref{prop.axvalidity}, the syntactic proof of $\DNkh$ presents the non-trivial use of the axioms regarding $\Box$ and $\K$ in Proposition \ref{prop.reduaxbox}.   
\begin{proposition}
The following are provable in $\SDELKh$: 
\begin{center}
\begin{tabular}{ll}
$\KKhN$ &$\K\neg\alpha\to \Kh \neg \alpha$\\
$\DNkh$ &$\Kh(\neg\neg p\to p)$\\
$\NDkh_k$ & $\Kh((\neg \alpha \to \bigvee_{1\leq i\leq k}
\neg \beta_i)\to \bigvee_{1\leq i\leq k}(\neg \alpha \to \neg \beta_i))$
\end{tabular}
\end{center}
\end{proposition}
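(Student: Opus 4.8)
The plan is to prove the three formulas in order, since each relies on the previous ones, and to lean heavily on the reduction axioms together with the auxiliary provable schemata of Proposition~\ref{prop.reduaxbox}. For $\KKhN$, the key is the syntactic counterpart of the negative-translation fact (Proposition~\ref{prop.negtrans}): I would first show $\vdash \Kh\neg\alpha \lra \K\neg\alpha$ in $\SDELKh$. By $\KhI$ we have $\vdash \Kh(\alpha\to\bot)\lra \K\Box(\Kh\alpha\to\Kh\bot)$; by $\Khbot$ this is $\vdash \Kh\neg\alpha\lra \K\Box(\Kh\alpha\to\bot) = \K\Box\neg\Kh\alpha$. Now using $\KhK$ ($\Kh\alpha\to\K\alpha$) and $\KKhp$ applied to... actually more directly: $\neg\Kh\alpha$ can be rewritten, but the cleanest route is to observe $\vdash \Box\neg\Kh\alpha\lra \neg\Kh\alpha$ via \textbf{Persistence} ($\Kh\alpha\lra\Box\Kh\alpha$, Proposition~\ref{persistence}, whose syntactic provability follows from $\Per$/$\AxTB$ applied through $\KhI$-reductions) or more robustly via $\AxEucKh$ and $\AxTransKh$ turning $\neg\Kh\alpha$ into something under $\K$; combined with $\KINV$-style reasoning $\vdash \K\Box\neg\Kh\alpha\lra\K\neg\Kh\alpha$. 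Then $\Kh\neg\alpha\lra\K\neg\Kh\alpha$, and finally one shows $\vdash\K\neg\Kh\alpha\lra\K\neg\alpha$ using $\KhK$ and the provable direction $\K\neg\alpha\to\neg\Kh\alpha$ (from $\Kh\alpha\to\K\alpha$ contraposed). Hence $\vdash\Kh\neg\alpha\lra\K\neg\alpha$, and since $\K\neg\alpha\to\Kh\neg\alpha$ is one direction of this equivalence, $\KKhN$ follows.

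For $\DNkh$, i.e.\ $\vdash\Kh(\neg\neg p\to p)$, I would use $\RKhI$: it suffices to prove $\vdash\Kh\neg\neg p\to\Kh p$. By the just-established $\Kh\neg\neg\alpha\lra\K\neg\neg\alpha\lra\K\alpha$ specialized to $\alpha=p$, we get $\vdash\Kh\neg\neg p\lra\K p$; combined with $\KKhp$ ($\K p\to\Kh p$) this gives $\vdash\Kh\neg\neg p\to\Kh p$, and $\RKhI$ delivers $\Kh(\neg\neg p\to p)$. This is where the axioms on $\Box$ and $\K$ in Proposition~\ref{prop.reduaxbox} do real work, since unfolding $\Kh\neg\neg p$ via $\KhI$ produces nested $\K\Box$ modalities over $\Kh p\to\bot$ that have to be collapsed using \KINV\ and \hKINV. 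For $\NDkh_k$, the pattern mirrors the semantic argument in Proposition~\ref{prop.axvalidity}: using $\RKhI$ it suffices to prove $\vdash\Kh(\neg\alpha\to\bigvee_i\neg\beta_i)\to\Kh(\bigvee_i(\neg\alpha\to\neg\beta_i))$, and by $\KhD$, $\KhI$, $\Khbot$ this reduces to a pure $\SFive$-with-$\Box$ statement about $\K\Box$ formulas, namely $\vdash\K\Box(\K\neg\alpha\to\bigvee_i\K\neg\beta_i)\to\bigvee_i\K\Box(\K\neg\alpha\to\K\neg\beta_i)$ (after replacing each $\Kh\neg(\cdot)$ by $\K\neg(\cdot)$ via $\KKhN$). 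The contrapositive of this is provable by the same "union of two witnessing submodels" idea made syntactic through $\BKD$ and $\PR$.

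The main obstacle I expect is the bookkeeping in the reductions: correctly pushing $\Kh$ through $\KhI$ introduces $\K\Box$ stacks whose simplification requires invoking exactly the right instances of \INV, \KINV, \hKINV, $\BD$, $\BKD$ from Proposition~\ref{prop.reduaxbox}, and one must be careful that $\rRE$ is applied only outside the scope of $\Kh$ (as the side condition demands) — in practice this means first fully eliminating the outer $\Kh$ via $\KhI$/$\Khbot$ before doing any replacement inside. A secondary subtlety is that $\RKhI$ and $\KKhN$ must themselves be established before they are used, so the order $\KKhN \to \DNkh \to \NDkh_k$ is essential. Once $\vdash\Kh\neg\alpha\lra\K\neg\alpha$ is in hand, the rest is routine propositional and $\SFive$ reasoning, so the genuinely delicate part is the first equivalence and the submodel-union step for $\NDkh_k$.
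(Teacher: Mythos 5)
Your plan for $\NDkh_k$ matches the paper's derivation ($\BKD$ plus $\SFive_{\K}$-reasoning to get $\K\Box(\K\neg\alpha\to\bigvee_i\K\neg\beta_i)\to\bigvee_i\K\Box(\K\neg\alpha\to\K\neg\beta_i)$, then $\KKhN$, $\KhK$, $\KhD$, $\KhI$, $\RKhI$), and your $\DNkh$ argument is essentially fine, since the only direction it needs, $\Kh\neg\neg p\to\K p$, follows from $\KhK$ together with $\NECK$/$\DISTK$ applied to the tautology $\neg\neg p\to p$ -- in fact, contrary to your expectation, no $\Box$-axioms from Proposition \ref{prop.reduaxbox} are needed there at all. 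The genuine gap is in your derivation of $\KKhN$, on which the rest is made to depend. You propose to collapse $\Box\neg\Kh\alpha$ to $\neg\Kh\alpha$ (equivalently $\K\Box\neg\Kh\alpha\lra\K\neg\Kh\alpha$) and then to show $\K\neg\Kh\alpha\lra\K\neg\alpha$. Both principles are semantically invalid, hence by soundness unprovable: take $\alpha=p$ in a two-world model where $p$ holds at exactly one world $w$. Then $\neg\Kh p$ holds (so $\K\neg\Kh p$ holds), yet updating to the singleton submodel $\{w\}$ makes $\Kh p$ true, so $\neg\Kh p\to\Box\neg\Kh p$ fails; and in the same model $\K\neg p$ fails, so $\K\neg\Kh p\to\K\neg p$ fails. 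Persistence (Proposition \ref{persistence}) gives $\Kh\alpha\lra\Box\Kh\alpha$, but the \emph{negation} of a know-how formula is not persistent under updates, and neither $\AxTransKh$/$\AxEucKh$ nor $\KINV$ (which concerns $\Box\K\alpha$ for \emph{propositional} $\alpha$ and rests on $\PR$ and $\Per$) gives you the collapse you need. So the chain you need for $\KKhN$, namely $\K\neg\alpha\to\K\Box\neg\Kh\alpha$, cannot be routed through $\K\neg\Kh\alpha$.

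The repair (and the paper's route) is to use persistence of the propositional formula $\neg\alpha$ itself rather than of $\neg\Kh\alpha$, and to prove only the implication actually asserted by $\KKhN$: from $\KhK$ and $\AxTK$ derive $\vdash\Kh\alpha\to\alpha$, contrapose to $\vdash\neg\alpha\to\neg\Kh\alpha$, apply $\NECB$ and $\DISTB$ to obtain $\vdash\Box\neg\alpha\to\Box\neg\Kh\alpha$, use $\Per$ (applicable since $\neg\alpha\in\PL$) to get $\vdash\neg\alpha\to\Box\neg\Kh\alpha$, then $\NECK$/$\DISTK$ yield $\vdash\K\neg\alpha\to\K\Box\neg\Kh\alpha$, and finally rewrite $\neg\Kh\alpha$ as $\Kh\alpha\to\Kh\bot$ via $\Khbot$ and apply $\KhI$ (both via $\rRE$, outside the scope of $\Kh$) to conclude $\vdash\K\neg\alpha\to\Kh\neg\alpha$. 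The full equivalence $\Kh\neg\alpha\lra\K\neg\alpha$ then follows with $\KhK$, and with it your $\NDkh_k$ argument goes through as planned.
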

\begin{proof}
\text{For $\KKhN$:}
{\setcounter{equation}{0}}
\begin{align}
	&\vdash\Kh\alpha\to \alpha\ &&\KhK,\AxTK,\TAUT\label{106}\\
	&\vdash\neg\alpha\to \neg\Kh\alpha\ &&(\ref{106})\TAUT\label{107}\\
	&\vdash\Box\neg\alpha\to\Box\neg\Kh\alpha &&(\ref{107})\NECB\label{108}\\
	&\vdash\neg\alpha\to\Box\neg\Kh\alpha &&(\ref{108})\Per,\TAUT\label{109}\\
	&\vdash\K\neg\alpha\to\K\Box\neg\Kh\alpha &&(\ref{109})\NECK\label{110}\\
	&\vdash\K\neg\alpha\to\K\Box(\Kh\alpha\to\Kh\bot) &&(\ref{110})\Khbot,\rRE\label{111}\\
	&\vdash\K\neg\alpha\to \Kh \neg \alpha &&(\ref{111})\KhI,\rRE
\end{align}
\text{For $\DNkh$:}
\begin{align}
{\setcounter{equation}{0}}
	&\vdash\neg\neg p\to p &&\TAUT\label{100}\\
	&\vdash\K(\neg\neg p\to  p) &&(\ref{100})\NECK\label{101}\\
	&\vdash\K\neg\neg p\to \K p &&(\ref{101})\DISTK, \MP\label{102}\\
    &\vdash\Kh\neg\neg p\to \K\neg\neg p &&\KhK\label{103}\\
    &\vdash\K p\to \Kh p 
    &&\KKhp\label{104}\\
	&\vdash\Kh\neg\neg p\to \Kh p &&(\ref{103})(\ref{102})(\ref{104})\TAUT\label{105}\\
	&\vdash\Kh(\neg\neg p\to p) &&(\ref{105})\RKhI
\end{align}
\text{For $\NDkh_k$:}
\begin{align}
{\setcounter{equation}{0}}
    &\vdash\K\alpha\lor\bigvee_{1\leq i\leq k}
    \K(\alpha\lor\neg \beta_i)\to \bigvee_{1\leq i\leq k}(\K\alpha\lor\K(\alpha\lor\neg\beta_i))            
    &&\TAUT\label{121}	\\
    &\vdash\K(\alpha\lor\bigvee_{1\leq i\leq k}\K(\alpha\lor\neg \beta_i))\lra (\K\alpha\lor\bigvee_{1\leq i\leq k}\K(\alpha\lor\neg \beta_i))   
    &&\SFive_{\K}\label{122}	\\
    &\vdash\K(\alpha \lor\K(\alpha\lor\neg\beta_i)) \lra (\K\alpha\lor\K(\alpha\lor\neg\beta_i))    &&\SFive_{\K}\label{123}	\\
    &\vdash\K(\alpha\lor\bigvee_{1\leq i\leq k}\K(\alpha \lor\neg \beta_i))\to \bigvee_{1\leq i\leq k}\K(\alpha \lor\K(\alpha\lor\neg\beta_i))  &&(\ref{121})(\ref{122})(\ref{123})\rRE\label{124}	\\
    &\vdash\Box(\hK \alpha \lor \bigvee_{1\leq i\leq k}
    \K\neg \beta_i)\lra (\alpha\lor\bigvee_{1\leq i\leq k}\K(\alpha \lor\neg \beta_i))  
    &&\BKD\label{125}	\\   
    &\vdash\Box(\hK \alpha \lor \K\neg \beta_i)\lra (\alpha \lor\K(\alpha\lor\neg\beta_i))
    &&\BKD\label{126}	\\
    &\vdash\K\Box(\K\neg \alpha \to \bigvee_{1\leq i\leq k}
    \K\neg \beta_i)\to \bigvee_{1\leq i\leq k}\K\Box(\K\neg \alpha \to \K\neg \beta_i)    &&(\ref{124})(\ref{125})(\ref{126})\rRE\label{128}	\\
    &\vdash\K\Box(\Kh\neg \alpha \to \bigvee_{1\leq i\leq k}
    \Kh\neg \beta_i)\to \bigvee_{1\leq i\leq k}\K\Box(\Kh\neg \alpha \to \Kh\neg \beta_i)   &&(\ref{128})\KKhN,\KhK,\rRE\label{129}	\\
    &\vdash\K\Box(\Kh\neg \alpha \to \Kh\bigvee_{1\leq i\leq k}
    \neg \beta_i)\to \bigvee_{1\leq i\leq k}\Kh(\neg \alpha \to \neg \beta_i)     
    &&(\ref{129})\KhD,\KhI, \rRE\label{130}\\
    &\vdash\Kh(\neg \alpha \to \bigvee_{1\leq i\leq k}
    \neg \beta_i)\to \Kh\bigvee_{1\leq i\leq k}(\neg \alpha \to \neg \beta_i)     
    &&(\ref{130})\KhI,\KhD, \rRE\label{131}\\
    &\vdash\Kh((\neg \alpha \to \bigvee_{1\leq i\leq k}
    \neg \beta_i)\to \bigvee_{1\leq i\leq k}(\neg \alpha \to \neg \beta_i)) 
    &&(\ref{131})\RKhI,\rRE
\end{align}

\end{proof}
Note that although $\NDkh$ and $\KhKP$ are very similar semantically as in the proof of Proposition \ref{prop.axvalidity}, deriving $\KhKP$ requires more efforts within our proof system due to the absence of negations in front of $\beta_i$, which bridged $\K$ and $\Kh$ in the above proof. Nevertheless, $\KhKP$ can be proved via a detour using Theorem \ref{thm.transs5} that gives the equivalent $\bigvee \K\rho_i$ form of each $\Kh$-formula, which then reduces the case to $\NDkh$.\footnote{Instead of $\EU_k$, we may try to add an axiom $\hK\Diamond(\K\alpha \land  \neg \Kh\beta)\land \hK\Diamond(\K\alpha \land \neg \Kh\gamma)\to \hK\Diamond (\K\alpha\land \neg \Kh\beta\land \neg \Kh\gamma)$, which captures the possibility of merging two submodels, thus having a direct link with $\KhKP$ (cf. the proof of Proposition \ref{prop.axvalidity}.)  
}

To end this section, we discuss the connections of our logic of knowing how with the \textit{planning-based} knowing how logic studied in \cite{Wang2016,Fervari2017,Li2021-LIPKH}. In \cite{Li2021-LIPKH}, $\Kh\phi$ roughly says that there is a plan such that I know that executing it will always guarantee $\phi$, where $\phi$ can be any formula in the language. Note that this is very different from the intuitive reading of know-how operator in the current framework, where $\Kh\alpha$ says that knowing how $\alpha$ is true, where $\alpha$ is a propositional formula. The fundamental difference in the semantics is reflected by the axioms regarding the $\Kh$ modality. For example, $\Kh(\phi\lor\psi)\to (\Kh\phi\lor\Kh\psi)$ is not valid in  \cite{Fervari2017,Li2021-LIPKH}, e.g., one can always use any  plan to make sure $p\lor\neg p$ but it does not mean one can make sure $p$ or make sure $\neg p$. On the other hand, $\K\phi\to \Kh\phi$ is valid in \cite{Fervari2017,Li2021-LIPKH} since you can always use the empty plan, whereas in our framework, it does not hold in general when $\phi$ is not a statement.

\section{Epistemic interpretation of inquisitive semantics}\label{sec.understanding}


In this section, we look at the concepts in inquisitive semantics from our alternative epistemic perspective. 

We first summarize the correspondence between our semantics and inquisitive semantics below based on the corresponding definitions in Section \ref{sec.pre}.\footnote{If $V[s]=\wp(\Prop)$ then we have the corresponding \textit{absolute} notions of inquisitiveness and informativeness as in \cite{Ciardelli2011}.}
\begin{center}
    \begin{tabular}{|l|l|}
    \hline
  Inquisitive semantics & Our epistemic semantics  \\
  \hline
  Information model & single-agent S5 epistemic model (with an implicit total relation)\\ 
  non-empty states & epistemic submodels\\ 
  support ($\M, s\Vdash\alpha$) &  know-how ($\M_s\vDash\Kh\alpha$) \\
  alternatives for $\alpha$ in $\M$ & maximal submodels of $\M$ satisfying $\Kh\alpha$\\
  proposition expressed by $\alpha$ in $\M$ & set of submodels of $\M$ for $\Kh\alpha$  \\
  $\alpha$ is inquisitive in $\M$ & there are two maximal submodels satisfying $\Kh\alpha$ \\
  $\alpha$ is informative in $\M$ & there is one world not in any maximal submodels of $\M$ for $\Kh\alpha$ \\
  \hline 
\end{tabular}
\end{center}

\medskip

First we give an epistemic characterization of informativeness (and questions). The idea is that $\alpha$ is informative iff you do not know that $\alpha$ already.
\begin{proposition}\label{prop.ELinfo}Given any $\M$,  $\alpha$ is informative in $\M$ iff $\M\vDash \neg\K\alpha$. Thus $\alpha$ is a question in $\M$ iff $\M\vDash \K\alpha$.
\end{proposition}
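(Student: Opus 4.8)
The plan is to unfold the definition of informativeness from Definition~\ref{def.inqandinfo} and translate the quantification over alternatives into the epistemic language using the correspondence already established, in particular Lemma~\ref{lem.modelstate} (which gives $\M'\vDash\Kh\alpha \iff W_{\M'}\Vdash\alpha$ for submodels $\M'$ of $\M$) and Proposition~\ref{prop.nept} (resolvability equals truth, so $\M,w\vDash\alpha\iff\R(w,\alpha)\neq\varnothing$). Recall that $\alpha$ is informative in $\M$ exactly when there is a world $w_0\in\M$ that lies in no alternative for $\alpha$, where an alternative is a maximal state supporting $\alpha$.

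First I would observe that the union of all states supporting $\alpha$ is again a state supporting $\alpha$: by persistence (Proposition~\ref{persistence}, or directly from the semantics of $\Kh$ via $\R$), if $\R(\M',\alpha)\neq\varnothing$ for each member $\M'$ of a family, one needs the \emph{uniform} resolution, which is not automatic — so instead I would argue more carefully. The cleaner route: a world $w_0$ belongs to some state supporting $\alpha$ iff the singleton $\{w_0\}$ supports $\alpha$ (by persistence, every subset of a supporting state supports $\alpha$, and $\{w_0\}\subseteq s$ whenever $w_0\in s$), and $\{w_0\}\Vdash\alpha$ iff $\M,w_0\vDash\alpha$ by Proposition~\ref{prop.nept} applied to the singleton model (on a singleton every actual resolution is automatically uniform). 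Hence every state supporting $\alpha$ extends to a maximal one (using that $W_\M$ need not support $\alpha$ but Zorn's lemma or finiteness of $\S(\alpha)$-arguments give maximality), so $w_0$ lies in \emph{some} alternative iff $w_0$ lies in some supporting state iff $\M,w_0\vDash\alpha$. Therefore $\alpha$ is informative in $\M$ iff there exists $w_0\in\M$ with $\M,w_0\nvDash\alpha$, which is precisely $\M\vDash\neg\K\alpha$ (equivalently $\M\nvDash\K\alpha$, since the truth of $\K\alpha$ does not depend on the evaluation world by Proposition~\ref{prop.KHK}).

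The second sentence is then immediate: $\alpha$ is a question in $\M$ iff it is not informative in $\M$, iff $\M\nvDash\neg\K\alpha$, iff $\M\vDash\K\alpha$ — again using Proposition~\ref{prop.KHK} so that $\K\alpha$ is either valid on $\M$ or false everywhere on $\M$, making the negation behave classically at the level of $\M$.

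The main obstacle I expect is the step from ``$w_0$ lies in no alternative'' to ``$w_0$ lies in no supporting state'': one must ensure that every state supporting $\alpha$ is contained in a \emph{maximal} one, so that the absence of $w_0$ from all maximal supporting states really does force $\{w_0\}\nVdash\alpha$. For infinite $W$ this needs a Zorn-type argument showing unions of chains of supporting states support $\alpha$; this is standard in inquisitive logic (persistence plus the fact that support of $\alpha$ depends only on the finitely many atoms occurring in $\alpha$, or more directly the fact that $[\alpha]_\M$ is determined by finitely many ``basic'' conditions), and the paper's Proposition~\ref{prop.non-emptyS} on finiteness of $\S(\alpha)$ can be leveraged here. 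Once this maximal-extension fact is in hand, the rest is a routine translation.
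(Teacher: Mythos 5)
Your proof is correct and follows essentially the same route as the paper, which formalizes ``some world lies in no alternative'' as $\M\vDash\hK\neg\Diamond\Kh\alpha$ and then invokes Proposition~\ref{prop.vervalidy} ($\alpha\lra\Diamond\Kh\alpha$), i.e.\ exactly your persistence-plus-singleton-state argument in modal dress. Your explicit handling of the passage from ``in no alternative'' to ``in no supporting state'' (every supporting state extends to a maximal one, justified via the finiteness of $\S(\alpha)$, or equivalently of $\RR(\alpha)$, so that unions of chains of supporting states still support $\alpha$) fills in a step the paper passes over silently, and the finiteness argument you sketch for it is the right fix.
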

\begin{proof}
Recall that $\alpha$ is informative in $\M$ iff there is at least one world in $\M$ that is not included in any alternative for $\alpha$ in $\M$ (cf. Definition \ref{def.inqandinfo}). This definition can be rendered intuitively (and formally) as $\M\vDash \hK \neg \Diamond \Kh\alpha$ in our framework. Now due to Proposition \ref{prop.vervalidy}, it is equivalent to $\M\vDash \hK\neg \alpha,$ namely, $\M\vDash\neg\K \alpha.$ 
\end{proof}
Next, we prove an epistemic characterization of inquisitiveness and statements. The idea is that $\alpha$ is inquisitive in $s$ iff it is possible to know $\alpha$ while not knowing how to resolve it with the information provided by $s$ in a model $\M$.
\begin{proposition}\label{prop.ELInq} Given any $\M$, 
$\alpha$ is inquisitive in $\M$ iff $\M \vDash\hK\Diamond(\K\alpha\land\neg\Kh\alpha).$ Thus $\alpha$ is a statement in $\M$ iff $\M\vDash \K\Box(\K\alpha\to\Kh\alpha). $ 
\end{proposition}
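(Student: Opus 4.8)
The plan is to unfold the definition of inquisitiveness and translate it step by step into our epistemic language, mirroring the proof of Proposition \ref{prop.ELinfo}. Recall that $\alpha$ is inquisitive in $\M$ iff $[\alpha]_\M$ contains at least two alternatives, i.e., at least two distinct maximal states supporting $\alpha$. By Proposition \ref{prop.satemodel}, non-empty states supporting $\alpha$ correspond exactly to submodels satisfying $\Kh\alpha$; and maximality of states corresponds to maximality of those submodels. So the first step is to observe that $\alpha$ is inquisitive in $\M$ iff there are two distinct maximal submodels $\M_1,\M_2\subseteq \M$ with $\M_i\vDash\Kh\alpha$, which by persistence (Proposition \ref{persistence}) is equivalent to saying that $\M$ itself is not the unique maximal submodel supporting $\Kh\alpha$ --- equivalently, there is a submodel $\M'\subsetneq\M$ satisfying $\Kh\alpha$ but, crucially, such that adding back any missing world destroys $\Kh\alpha$. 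I would then argue this is equivalent to the existence of a world $v\in\M$ and a submodel $\M'\ni v$ with $\M'\vDash\Kh\alpha$ but $\M,v\nvDash\Kh\alpha$ --- i.e., $v$ can be ``added consistently'' to some $\Kh\alpha$-submodel but is not in every maximal one.

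The second step is the key reformulation: $\alpha$ is inquisitive in $\M$ iff there exists a world $v\in\M$ and a submodel $\M'\subseteq\M$ containing $v$ such that $\M'\vDash \K\alpha$ (so $\alpha$ is true throughout $\M'$, hence $\M'$ is a ``candidate'' for resolving $\alpha$ locally) but $\M'\nvDash\Kh\alpha$. Here I use Proposition \ref{prop.Kh2K} ($\Kh\alpha\to\K\alpha$) together with the fact (from the proof of Proposition \ref{prop.vervalidy}) that on any submodel where $\alpha$ is true somewhere, we can shrink to witness things; and I use that $\Kh\alpha$ fails on $\M'$ exactly when there is no uniform resolution across $\M'$ even though each world individually resolves $\alpha$ (by Proposition \ref{prop.nept}, since $\K\alpha$ holds on $\M'$). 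Intuitively: inquisitiveness means ``you could come to know $\alpha$ (via some information update) yet still not know how it is true'', which is precisely $\M\vDash\hK\Diamond(\K\alpha\land\neg\Kh\alpha)$: at some world ($\hK$) there is an update ($\Diamond$) after which $\K\alpha$ holds but $\Kh\alpha$ does not.

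The third step is just the formal bookkeeping: $\M\vDash\hK\Diamond(\K\alpha\land\neg\Kh\alpha)$ unpacks, by the semantics of $\hK$ and $\Diamond$, to ``there is $v\in\M$ and a submodel $\M'\subseteq\M$ with $v\in\M'$ such that $\M',v\vDash\K\alpha\land\neg\Kh\alpha$'', and by Proposition \ref{prop.KHK} the choice of point in $\M'$ is irrelevant, so this is exactly the condition from step two. For the statement clause, one simply negates: $\alpha$ is a statement in $\M$ iff $\M\vDash\neg\hK\Diamond(\K\alpha\land\neg\Kh\alpha)$, which by the duality $\neg\hK=\K\neg$, $\neg\Diamond=\Box\neg$ and pushing the negation through $\K\alpha\land\neg\Kh\alpha$ (using $\neg(\phi\land\neg\psi)\lra(\phi\to\psi)$) rewrites as $\M\vDash\K\Box(\K\alpha\to\Kh\alpha)$. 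Note $\K\Box(\K\alpha\to\Kh\alpha)$ is morally $\K\Box(\Kh\alpha\to\Kh\alpha)$-flavored but with the genuine content that knowing $\alpha$ already forces knowing how; and indeed $\K\alpha\to\Kh\alpha$ being $\Box$-$\K$-necessitated is exactly ``not inquisitive''.

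The main obstacle I expect is the equivalence in step one/two between the ``two distinct maximal submodels'' formulation and the ``some submodel where $\K\alpha$ holds but $\Kh\alpha$ fails'' formulation --- in particular the direction showing that inquisitiveness (two alternatives) really does produce a single submodel witnessing $\K\alpha\land\neg\Kh\alpha$. The subtlety is that two distinct alternatives $s_1,s_2$ need not have $s_1\cup s_2$ supporting $\alpha$, but one must check that $\M_{s_1\cup s_2}$ (or the right finite submodel, invoking $\EU_k$-style reasoning if needed) satisfies $\K\alpha$ while $\Kh\alpha$ fails precisely because no single resolution works across both $s_1$ and $s_2$. Conversely, from $\M'\vDash\K\alpha\land\neg\Kh\alpha$ one needs to extract two genuinely distinct maximal $\Kh\alpha$-submodels inside $\M$, which uses that the finitely many resolutions in $\S(\alpha)$ (Proposition \ref{prop.non-emptyS}) partition $\M'$ into at least two nonempty ``resolution classes'', each extendable to a maximal $\Kh\alpha$-submodel, and distinctness of these. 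This partition-into-resolution-classes argument is the technical heart and where I would spend the most care.
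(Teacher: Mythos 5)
Your target equivalence and overall route are the same as the paper's: the real content is ``$\alpha$ is inquisitive in $\M$ iff some submodel of $\M$ satisfies $\K\alpha\land\neg\Kh\alpha$'', proved by taking the union of two alternatives in one direction and extracting two distinct maximal supporting states in the other, after which the $\hK\Diamond$-bookkeeping (using Proposition~\ref{prop.KHK}) and the dualization of $\neg\hK\Diamond(\K\alpha\land\neg\Kh\alpha)$ to $\K\Box(\K\alpha\to\Kh\alpha)$ are routine. Your step two, step three and your closing paragraph are exactly this, and in fact your closing paragraph spells out more of the hard direction than the paper does (the paper only gestures at the union-of-alternatives witness).

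However, the chain of ``equivalently''s in your step one is false, and if the proof actually passed through it, it would establish a wrong characterization. Having two distinct maximal $\Kh\alpha$-submodels is \emph{not} equivalent to ``$\M$ is not the unique maximal submodel satisfying $\Kh\alpha$'', nor to ``there exist $v\in\M$ and $\M'\ni v$ with $\M'\vDash\Kh\alpha$ but $\M,v\nvDash\Kh\alpha$''; by Proposition~\ref{prop.KHK} the latter just says that $\M\nvDash\Kh\alpha$ while some non-empty submodel satisfies $\Kh\alpha$, i.e.\ that $\alpha$ is informative but consistent. Concretely, take $\alpha=p$ and $\M$ with one $p$-world and one $\neg p$-world: the singleton $p$-submodel is the unique maximal $\Kh p$-submodel, it differs from $\M$, and $\M\nvDash\Kh p$, so all of your step-one conditions hold; yet $p$ has a single alternative, is not inquisitive, and indeed no submodel satisfies $\K p\land\neg\Kh p$. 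Your last paragraph effectively repairs this by proving the two-alternatives/$\K\alpha\land\neg\Kh\alpha$ equivalence directly, and that sketch is essentially right, with two small corrections: the sets $\{w\in\M'\mid x\in\R(w,\alpha)\}$ for $x\in\S(\alpha)$ form a \emph{cover} of $\M'$ rather than a partition (a world may have several resolutions) --- what you need is that they cover $\M'$ (from $\K\alpha$ and Proposition~\ref{prop.nept}) while none covers all of it (from $\neg\Kh\alpha$), together with finiteness of $\S(\alpha)$ (Proposition~\ref{prop.non-emptyS}) to extend any supporting state to a maximal one; and in the forward direction the clean reason $\Kh\alpha$ fails on $\M_{s_1\cup s_2}$ is maximality: a uniform resolution on $s_1\cup s_2$ would, via Proposition~\ref{prop.satemodel}, make $s_1\cup s_2$ a supporting state properly extending the alternative $s_1$ --- no $\EU_k$-style reasoning is needed, since the union of two states is already a state.
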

\begin{proof} 
Recall that $\alpha$ is inquisitive in $\M$ iff there are at least \textit{two} alternatives for $\alpha$ (cf.\ Definition \ref{def.inqandinfo}). In our terms, it means there is some submodel (e.g., the union of those two alternatives) such that $\K\alpha$ holds but there is no uniform resolution. Formally, it amounts to $\M\vDash \hK\Diamond(\K\alpha\land\neg\Kh\alpha)$. Therefore $\alpha$ is a statement in $\M$ iff $\alpha$ is not inquisitive iff $\M\vDash \neg \hK\Diamond(\K\alpha\land\neg\Kh\alpha)$ iff $\M\vDash \K\Box(\K\alpha\to\Kh\alpha)$.
 


\end{proof}
Note that $\K\Box (\K\alpha\to\Kh\alpha)$ is equivalent to $\Kh(\neg\neg\alpha\to \alpha)$ which is consistent with the characterization of statements in \cite{Ciardelli2011}. $\K\Box (\K\alpha\to\Kh\alpha)$ says that I know that if you tell me $\alpha$ is resolvable then I will know how to resolve it. 

In particular, when taking a full model $\M$ and the trivial state $W_\M$, then from Propositions \ref{prop.full}, \ref{prop.ELinfo} and \ref{prop.ELInq}, we have the characterization of absolute questions and statements. 
\begin{proposition} \label{prop.assert}
 $\alpha$ is a question iff $\K\alpha$ is valid iff $\alpha$ is a classical tautology. $\alpha$ is a statement iff $\K\alpha\to \Kh\alpha$ is valid iff  $\K\alpha\lra \Kh\alpha$ is valid.
\end{proposition}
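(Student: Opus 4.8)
The plan is to specialize Propositions~\ref{prop.ELinfo} and~\ref{prop.ELInq} to a full model $\M$ (with its trivial state $W_\M$, which is what ``absolute'' refers to) and then upgrade the resulting model-conditions to validity over \emph{all} models. The technical workhorse is the observation that, for $\alpha\in\PL$, the truth of $\alpha$, of $\K\alpha$ and of $\Kh\alpha$ on a model depends only on the set $\{V(w)\mid w\in W\}$ of valuations it realizes: the classical clause for $\alpha$ reads off $V(w)$ alone, and by Proposition~\ref{prop.rind} together with the reformulated clauses $\M\vDash\K\alpha$ iff for all $v$ one has $\R(v,\alpha)\neq\emptyset$, and $\M\vDash\Kh\alpha$ iff $\R(\M,\alpha)\neq\emptyset$, the same is true of $\K\alpha$ and $\Kh\alpha$ (and none of these truth values depends on the designated world, by Proposition~\ref{prop.KHK}). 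I will also use that a full model realizes \emph{every} non-empty set $U\subseteq\wp(\Prop)$ of valuations as the valuation-image of one of its submodels (pick one witnessing world per element of $U$).

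For the question clause I would argue: $\alpha$ is a question iff, by Proposition~\ref{prop.ELinfo}, $\M\vDash\K\alpha$ for the full model $\M$; since $\M$ is full this says that every world of $\M$ satisfies $\alpha$, i.e.\ $\alpha$ is satisfied under every valuation, i.e.\ $\alpha$ is a classical tautology; and a classical tautology holds at every world of every model, so $\K\alpha$ is valid, while conversely a valid $\K\alpha$ forces $\alpha$ valid (via $\K\varphi\to\varphi$) hence a tautology. This closes the three-way equivalence.

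For the statement clause I would start from Proposition~\ref{prop.ELInq}: $\alpha$ is a statement iff $\M\vDash\K\Box(\K\alpha\to\Kh\alpha)$ for the full model $\M$. The implication ``$\K\alpha\to\Kh\alpha$ valid $\Rightarrow$ this holds'' is immediate. For the converse, recall (as in the proof of Proposition~\ref{prop.alternativeimp}) that $\K\Box$ over an $\SFive$ model $\M$ quantifies over all submodels of $\M$; thus the hypothesis gives $\M'\vDash\K\alpha\to\Kh\alpha$ for every submodel $\M'\subseteq\M$. Given an arbitrary pointed model $\N,u$, take the submodel $\M'$ of $\M$ whose valuation-image equals that of $\N$ (available since $\M$ is full and $W_\N\neq\emptyset$); by the valuation-dependence observation $\N,u\vDash\K\alpha\to\Kh\alpha$, so $\K\alpha\to\Kh\alpha$ is valid. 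Finally $\K\alpha\to\Kh\alpha$ valid iff $\K\alpha\lra\Kh\alpha$ valid, because $\Kh\alpha\to\K\alpha$ is valid unconditionally by Proposition~\ref{prop.Kh2K}.

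The main obstacle is precisely the $\Rightarrow$ direction for statements, i.e.\ passing from ``holds in all submodels of the full model'' to ``valid in all models''; the two auxiliary facts above (valuation-dependence of $\K\alpha$ and $\Kh\alpha$, and that a full model's submodels exhaust all valuation-sets) are exactly what bridges that gap, and everything else is routine bookkeeping. As a sanity check one can also route through the note after Proposition~\ref{prop.ELInq} that $\K\Box(\K\alpha\to\Kh\alpha)$ is equivalent to $\Kh(\neg\neg\alpha\to\alpha)$, and then chain Lemma~\ref{lem.modelstate}, Proposition~\ref{prop.full}, Corollary~\ref{coro.equiv}, Proposition~\ref{prop.validkh} and Proposition~\ref{prop.negtrans} to reach $\vDash\K\alpha\to\Kh\alpha$.
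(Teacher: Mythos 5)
Your proposal is correct. The paper offers no written proof here: it presents the proposition as an immediate consequence of Propositions~\ref{prop.full}, \ref{prop.ELinfo} and~\ref{prop.ELInq}, leaving implicit exactly the step you identify as the main obstacle, namely passing from ``holds on (all submodels of) the full model'' to ``valid on all models''. Your direct semantic bridging --- that for $\alpha\in\PL$ the truth values of $\alpha$, $\K\alpha$ and $\Kh\alpha$ depend only on the set of valuations realized (via Proposition~\ref{prop.rind}, the $\exists\K$-reformulation of $\Kh$, and Proposition~\ref{prop.KHK}), together with the fact that the submodels of a full model realize every non-empty valuation set --- is a clean and correct way to supply that step; note you can even avoid any choice by taking as $\M'$ \emph{all} worlds of $\M$ whose valuation lies in the target set, rather than one witness per valuation. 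Your closing ``sanity check'' route (through $\Kh(\neg\neg\alpha\to\alpha)$, Lemma~\ref{lem.modelstate}, Proposition~\ref{prop.full}, Corollary~\ref{coro.equiv}, and Propositions~\ref{prop.validkh} and~\ref{prop.negtrans}) is in fact the derivation the paper's citation of Proposition~\ref{prop.full} points to, so you have both the paper's intended argument and a more self-contained semantic one; the latter buys independence from the $\InqL$-transfer machinery, the former reuses results already established. Either suffices, and the remaining equivalences (question iff $\K\alpha$ valid iff tautology; $\to$ versus $\lra$ via Proposition~\ref{prop.Kh2K}) are handled exactly as one should.
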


Based on the above epistemic characterization of the relative notion of  statements, we can prove the following from a purely epistemic perspective extending the result in Proposition 2.19 in \cite{Ciardelli2011} about absolute statements. 
\begin{proposition}
For any $p\in\Prop$ and $\alpha,\beta\in\LPL$, any  given model $\M$:
\begin{enumerate}
    \item $p$ is a statement in $\M$.
    \item $\bot$ is a statement in $\M$.
    \item If $\alpha$ and $\beta$ are statements in $\M$, then $\alpha\land\beta$ is a statement in $\M$.
    \item if $\beta$ is a statement in $\M$, then $\alpha\to\beta$ is a statement in $\M$.
\end{enumerate}
\end{proposition}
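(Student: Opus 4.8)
The plan is to verify each of the four clauses by combining the epistemic characterization of statements from Proposition~\ref{prop.ELInq}, namely that $\alpha$ is a statement in $\M$ iff $\M\vDash \K\Box(\K\alpha\to\Kh\alpha)$, with the reduction axioms for $\Kh$ from Proposition~\ref{prop.khreduction} and the bridging results linking $\Kh$ and $\K$ (Propositions~\ref{prop.K2Kh}, \ref{prop.Kh2K}, and~\ref{prop.negtrans}). Throughout I would work semantically, i.e.\ fix an arbitrary model $\M$ (and, where convenient, an arbitrary submodel, exploiting that $\K\Box$ quantifies over all submodels as in Proposition~\ref{prop.alternativeimp}).

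For clause~(1), $p$ is a statement since $\Kh p\lra\K p$ is valid (Proposition~\ref{prop.K2Kh}), so $\K p\to\Kh p$ holds everywhere, hence $\M\vDash\K\Box(\K p\to\Kh p)$ trivially. For clause~(2), $\bot$ is a statement because $\Kh\bot\lra\bot$ is valid ($\Khbot$), so $\K\bot\to\Kh\bot$ is a propositional consequence of $\K\bot\to\bot$; alternatively, $\K\bot$ is only satisfied at no world of any submodel (since $W$ is non-empty), making the implication vacuous. For clause~(3), suppose $\alpha$ and $\beta$ are statements in $\M$; I must show $\M\vDash\K\Box(\K(\alpha\land\beta)\to\Kh(\alpha\land\beta))$. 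Fix a submodel $\M'$ with $\M'\vDash\K(\alpha\land\beta)$. Then $\M'\vDash\K\alpha$ and $\M'\vDash\K\beta$ (distributing $\K$ over $\land$), so by the statement hypothesis applied to the submodel $\M'$ we get $\M'\vDash\Kh\alpha$ and $\M'\vDash\Kh\beta$, hence $\M'\vDash\Kh\alpha\land\Kh\beta$, which by $\KhC$ (Proposition~\ref{prop.khreduction}) gives $\M'\vDash\Kh(\alpha\land\beta)$. For clause~(4), suppose $\beta$ is a statement in $\M$ and fix a submodel $\M'$ with $\M'\vDash\K(\alpha\to\beta)$; I must produce $\M'\vDash\Kh(\alpha\to\beta)$. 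By Proposition~\ref{prop.alternativeimp} it suffices to show that for every submodel $\M''\subseteq\M'$, $\M''\vDash\Kh\alpha$ implies $\M''\vDash\Kh\beta$. So assume $\M''\vDash\Kh\alpha$; then $\M''\vDash\K\alpha$ by Proposition~\ref{prop.Kh2K}, and since $\M''\vDash\K(\alpha\to\beta)$ (inherited from $\M'$), distribution of $\K$ yields $\M''\vDash\K\beta$; as $\beta$ is a statement in $\M$ and $\M''$ is a submodel of $\M$, the characterization gives $\M''\vDash\Kh\beta$, as required.

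The main obstacle, such as it is, is bookkeeping about which submodel the statement hypothesis is being applied to: the characterization $\M\vDash\K\Box(\K\beta\to\Kh\beta)$ must be read as saying that $\K\beta\to\Kh\beta$ holds on \emph{every} submodel of $\M$, and clauses~(3) and~(4) genuinely need this uniform version at the intermediate submodels $\M'$ and $\M''$ rather than just at $\M$ itself. Once one is careful that being a statement is inherited by the relevant submodels, each clause reduces to one application of a $\Kh$-reduction axiom together with $\Kh\alpha\to\K\alpha$ and $\K$-distribution, so no delicate argument is needed beyond invoking the already-established Propositions~\ref{prop.khreduction}, \ref{prop.Kh2K}, \ref{prop.alternativeimp}, and~\ref{prop.ELInq}.
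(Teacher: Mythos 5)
Your proposal is correct and follows essentially the same route as the paper: it uses the characterization $\M\vDash\K\Box(\K\alpha\to\Kh\alpha)$ from Proposition~\ref{prop.ELInq}, the reduction facts for $\Kh$, $\Kh\alpha\to\K\alpha$, and Proposition~\ref{prop.alternativeimp}, applying the statement hypothesis at the intermediate submodels $\M'$ and $\M''$ exactly as in the paper's argument for clauses (3) and (4). The only difference is that you spell out the (1)--(3) cases the paper dismisses as trivial or easy, and your bookkeeping about inheriting statement-hood at submodels is precisely the point the paper handles with its marker $(\dagger)$.
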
 
\begin{proof}
(1) and (2) are trivial. 
For (3): Suppose $\alpha$ and $\beta$ are statements in $\M$, by Proposition \ref{prop.ELInq}, we need to show $\M\vDash \K\Box (\K(\alpha\land\beta)\to\Kh(\alpha\land\beta))$, which is equivalent to $\M\vDash \K\Box ((\K\alpha\land\K\beta)\to(\Kh\alpha\land\Kh\beta))$. By the assumption that $\alpha$ and $\beta$ are statements in $\M$, it can be easily proved.

Now for (4): Suppose that $\beta$ is a statement in $\M$. By Proposition \ref{prop.ELInq}, we have $\M\vDash\K\Box (\K\beta\to \Kh\beta)\ (\dagger).$ We need to show that $\M\vDash \K\Box (\K(\alpha\to\beta)\to\Kh(\alpha\to\beta))$,  namely, for any submodel $\M'$ of $\M$ if $\M'\vDash\K(\alpha\to\beta)$ then $\M'\vDash \K\Box(\Kh\alpha\to \Kh\beta)$. Suppose  $\M'\vDash \K(\alpha\to\beta)$ and take any submodel $\M''$ of $\M'$ such that $\M''\vDash\Kh\alpha$, we only need to show $\M''\vDash \Kh\beta$. Since $\vDash\Kh\alpha\to\K\alpha$, we have $\M''\vDash \K\alpha$. Since $\M'\vDash \K(\alpha\to\beta)$ and $\M''$ is a submodel of $\M'$, we have $\M''\vDash \K\beta. $ Now due to the fact that $\M''$ is also a submodel of $\M$, we have $\M''\vDash\Kh\beta$ by $(\dagger)$, which completes the proof.   
\end{proof}

As an immediate consequence, any disjunction-free formula is a statement in any $\M$.
\begin{proposition}\label{prop.disjfree}
For any $\alpha\in\LPL$, if $\alpha$ is disjunction-free, then $\alpha$ is a statement in $\M$ for any $\M$. 
\end{proposition}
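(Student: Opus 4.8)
The statement to prove is Proposition \ref{prop.disjfree}: every disjunction-free $\alpha \in \LPL$ is a statement in any model $\M$. The plan is a straightforward structural induction on $\alpha$, piggybacking on the four closure conditions established in the immediately preceding proposition.

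First I would set up the induction on the construction of disjunction-free formulas. A disjunction-free $\LPL$-formula is built from atoms $p$ and $\bot$ using only $\land$ and $\to$ (recall $\neg\alpha$ abbreviates $\alpha\to\bot$, so negation is already covered by the $\to$ case). The base cases are handled directly: clause (1) of the preceding proposition says $p$ is a statement in $\M$, and clause (2) says $\bot$ is a statement in $\M$. For the inductive step on conjunction, if $\alpha$ and $\beta$ are both disjunction-free, then by the induction hypothesis both are statements in $\M$, and clause (3) gives that $\alpha\land\beta$ is a statement in $\M$. For implication, if $\alpha\to\beta$ is disjunction-free then in particular $\beta$ is disjunction-free, so by the induction hypothesis $\beta$ is a statement in $\M$; then clause (4) — which only requires the \emph{consequent} to be a statement — immediately gives that $\alpha\to\beta$ is a statement in $\M$, with no hypothesis needed on $\alpha$ at all.

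There is essentially no obstacle here: the real work was already done in proving clauses (3) and (4) of the preceding proposition, which is where the epistemic characterization of statements (Proposition \ref{prop.ELInq}, $\M\vDash\K\Box(\K\alpha\to\Kh\alpha)$) and the validity of $\Kh\alpha\to\K\alpha$ were used. The only point worth flagging is the asymmetry in the implication case: it is crucial that clause (4) imposes no condition on the antecedent $\alpha$, because $\alpha$ being disjunction-free tells us nothing useful by itself in the $\neg\alpha = \alpha\to\bot$ shape — but we don't need it, since $\bot$ (the consequent of a negation) is a statement. So the induction goes through cleanly, and the proposition follows as an immediate corollary of the structural closure properties just established.
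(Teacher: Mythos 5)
Your proof is correct and matches the paper's intent exactly: the paper presents this proposition as an immediate consequence of the preceding closure properties, and the implicit argument is precisely your structural induction using clauses (1)--(4), with the implication case needing only the consequent to be a statement. Nothing further is required.
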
 

Next, we show the epistemic proof of another result in \cite{Ciardelli2011} linking the three most important concepts in inquisitive semantics. 

\begin{proposition}[Support, inquisitiveness, and informativeness] $\M,s$ supports
a formula $\alpha$ if $\alpha$ is neither inquisitive in $\M_s$ nor informative in $\M_s$.
\end{proposition}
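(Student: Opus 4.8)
The plan is to translate the statement into our epistemic language and use the characterizations of inquisitiveness and informativeness already established. By Proposition \ref{prop.satemodel}, for a non-empty state $s$ we have $\M,s\Vdash\alpha$ iff $\M_s\vDash\Kh\alpha$; and by Proposition \ref{prop.ELinfo}, $\alpha$ is not informative in $\M_s$ iff $\M_s\vDash\K\alpha$, while by Proposition \ref{prop.ELInq}, $\alpha$ is not inquisitive in $\M_s$ iff $\M_s\vDash\K\Box(\K\alpha\to\Kh\alpha)$. So the whole claim reduces to the following purely epistemic fact: if $\M_s\vDash\K\alpha$ and $\M_s\vDash\K\Box(\K\alpha\to\Kh\alpha)$, then $\M_s\vDash\Kh\alpha$.

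First I would handle the case $s=\emptyset$ separately: the empty state supports every formula in inquisitive semantics, so there is nothing to prove (and indeed the statement is only interesting for non-empty $s$, since informativeness/inquisitiveness are defined via $\M_s$). For non-empty $s$, I would instantiate the $\Box$ in $\K\Box(\K\alpha\to\Kh\alpha)$ with the full model $\M_s$ itself (using $\AxTB$, i.e.\ $\Box\varphi\to\varphi$, semantically: $\M_s$ is a submodel of itself). Combined with $\M_s\vDash\K\alpha$ and modus ponens at the level of satisfaction, this yields $\M_s\vDash\Kh\alpha$, hence $\M_s\vDash\Kh\alpha$, hence $\M,s\Vdash\alpha$ by Proposition \ref{prop.satemodel}.

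There is essentially no hard part here; the only thing to be slightly careful about is the bookkeeping between the relative notions ``inquisitive/informative in $\M_s$'' and the epistemic formulas, making sure Propositions \ref{prop.ELinfo} and \ref{prop.ELInq} are applied to the model $\M_s$ rather than to $\M$, and that the semantics of $\K$ and $\Box$ on $\M_s$ really does give the needed ``$\M_s$ is among the submodels quantified over'' observation. I would also note in passing that this is a one-directional statement (the converse would require $\alpha$ to actually be a statement \emph{and} a question in $\M_s$, which is a strictly stronger hypothesis than plain support), so no extra work is needed there.
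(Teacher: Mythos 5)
Your proposal is correct and follows essentially the same route as the paper: both reduce the claim, via the epistemic characterizations of informativeness and inquisitiveness, to the implication $(\K\alpha\land \K\Box(\K\alpha\to\Kh\alpha))\to\Kh\alpha$, which is discharged by the factivity of $\K$ and $\Box$ (the paper invokes $\AxTK$ and $\AxTB$, exactly your ``$\M_s$ is among its own submodels'' observation). Your explicit handling of the empty state and the remark that the statement is one-directional are harmless additions but not needed beyond what the paper does.
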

\begin{proof}It is straightforward in our case due to the validity of 
$\Kh \alpha \lra (\K\alpha\land \K\Box(\K\alpha\to \Kh\alpha)). $ The right-to-left implication is due to the validity of the axioms $\AxTB$ and $\AxTK$. The left-to-right implication is due to the validity of $\KhK$ and the fact that once we have a uniform resolution for $\alpha$ then we still have it in any submodel. 
\end{proof}


As we mentioned earlier, the intended interpretation of inquisitive logic is not about knowledge, and what we are presenting is an alternative interpretation. On the other hand, there are intimate connections between the two interpretations conceptually. We conclude the section by the following discussion on knowledge and information range.
Starting from Hintikka \cite{Hintikka:kab}, knowledge in epistemic logic is defined based on information range: $\phi$ is known iff in $\phi$ holds on all the epistemic alternatives of the real world. This notion of knowledge is applied to various fields where the object to which we ascribe the knowledge statement is not an agent at all, such as in distributed system of computer science. Note that in many applications, the so-called ``agent'' is just a way of talking about the information range. It is also reflected in the technical fact that the classical propositional reasoning can be cast in the epistemic logic such that $\alpha\vDash_\CPL\beta$ iff $\K\alpha\vDash_{\mathbf{S5}} \K\beta$. Intuitively we can turn a classical propositional entailment from $\alpha$ to $\beta$ into an epistemic entailment by \textit{if $\alpha$ is known, then $\beta$ is also known}. As we have seen in Theorem~\ref{ent}, we can generalize it to match the entailment of $\InqL$ with then entailment in our  know-how variant of epistemic logic. Moreover, when talking about general knowledge, it is not essential to ask which specific agent the knowledge is ascribed to. Actually, the basic systems of epistemic logic are suitable for this kind of general reasoning, by assuming an idealized  reasoner, no matter \textit{who} exactly knows. From our perspective, when we do not give any special constraints on the information that the agent has, we can actually talk about the general semantic knowledge which is not ascribed to a particular agent. Therefore, to us, there is no drastic conceptual gap between the information model for the inquisitive semantics and epistemic model in our setting. Nevertheless, in concrete applications about real agents, some more detailed constraints on what agents know and what they can learn may be given, which may result in changes of logic (cf. e.g., \cite[Section 2.8]{draftCiardelli}).  







\section{Related work}
\label{sec.related work}
In this section, we first connect the formula-based resolutions studied in the literature of inquisitive logic in  \cite{Ciardelli2011,Ciardelli2018} to our approach, and then compare our work to the related modal logic work in the literature. 

\subsection{Resolutions in terms of formulas in $\LPL$}
In our framework, the resolutions defined in Definitions \ref{def.rs} and \ref{def.resolution} are not in the object language. It is not hard to check as long as $S(p)$ is a singleton $\{x\}$ and $R(w, p)=S(p)$ iff $p\in V_\M(w)$, the semantics will not affect the logic.\footnote{Note that what exactly \textit{is} the resolution of each atomic proposition does not matter, due to our BHK-like semantics for $\Kh$, which only checks the existence of (uniform) resolutions.} On the other hand, since the resolutions of atomic propositions are defined simply as themselves, it is possible to express the resolutions using formulas in the object language. In \cite{Ciardelli2011}, the authors proposed a notion of resolutions expressed by disjunction-free formulas in $\PL$. Here we take the simplified version Definition 8 in  \cite{Ciardelli2018}:\footnote{In Definition 9.1 in \cite{Ciardelli2011}, the resolution formulas (realizations) are based on some normal form. }  


\begin{definition}[Resolutions \cite{Ciardelli2018}]
\
\begin{itemize}
	\item $\RR(p)=\{p\}$ for $p\in\Prop$
	\item $\RR(\bot)=\{\bot\}$
	\item $\RR(\alpha\vee\beta)=\RR(\alpha)\cup\RR(\beta)$
	\item $\RR(\alpha\wedge\beta)=\{\rho\wedge\sigma \mid \rho\in\RR(\alpha)$ and $\sigma \in\RR(\beta)\}$
	\item $\RR(\alpha\to\beta)=\{\bigwedge_{\rho\in\RR(\alpha)}(\rho\to f(\rho)) \mid f:\RR(\alpha)\to\RR(\beta)\}$
\end{itemize}
\end{definition}
For example, instead of using the explicit functions for the resolutions of $\alpha\to\beta$, $\RR$ uses a conjunction of implications to capture the function. As proved in \cite{Ciardelli2011}, each element of $R(\alpha)$ is a \textit{statement}. 

The following proposition is stated without a proof in Proposition 9.3 in \cite{Ciardelli2011} which can be shown by an inductive proof. 
\begin{proposition}[\cite{Ciardelli2011}] \label{prop.realizesemantics}
For any model $\M$, state $s$ and formula $\alpha\in\PL$,
$$\M, s \Vdash \alpha \iff s \subseteq |\rho|_\M \text{ for some } \rho \in \RR(\alpha)
$$
where $|\rho|_\M$ is the set of possible worlds that satisfy $\rho$ classically in $\M$. 
\end{proposition}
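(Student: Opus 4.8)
## Proof Proposal

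The plan is to prove the equivalence $\M, s \Vdash \alpha \iff s \subseteq |\rho|_\M$ for some $\rho \in \RR(\alpha)$ by induction on the structure of $\alpha$. The base cases $\alpha = p$ and $\alpha = \bot$ are immediate: for $p$, we have $\RR(p) = \{p\}$, so $s \subseteq |p|_\M$ iff every world in $s$ satisfies $p$, which is exactly $\M, s \Vdash p$; for $\bot$, $\RR(\bot) = \{\bot\}$ and $|\bot|_\M = \emptyset$, so $s \subseteq |\bot|_\M$ iff $s = \emptyset$ iff $\M, s \Vdash \bot$.

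For the inductive cases, I would handle disjunction and conjunction first since they are routine. For $\alpha \vee \beta$: $\M, s \Vdash \alpha \vee \beta$ iff $\M, s \Vdash \alpha$ or $\M, s \Vdash \beta$, which by the IH holds iff $s \subseteq |\rho|_\M$ for some $\rho \in \RR(\alpha)$ or $s \subseteq |\sigma|_\M$ for some $\sigma \in \RR(\beta)$; since $\RR(\alpha \vee \beta) = \RR(\alpha) \cup \RR(\beta)$, this is exactly $s \subseteq |\tau|_\M$ for some $\tau \in \RR(\alpha \vee \beta)$. For $\alpha \wedge \beta$: using the IH and the fact that $|\rho \wedge \sigma|_\M = |\rho|_\M \cap |\sigma|_\M$, one checks that $\M, s \Vdash \alpha \wedge \beta$ iff $s \subseteq |\rho|_\M$ and $s \subseteq |\sigma|_\M$ for suitable $\rho \in \RR(\alpha)$, $\sigma \in \RR(\beta)$, iff $s \subseteq |\rho \wedge \sigma|_\M$ for such a pair, which matches the definition of $\RR(\alpha \wedge \beta)$.

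The main obstacle is the implication case $\alpha \to \beta$. Here $\M, s \Vdash \alpha \to \beta$ means: for all $t \subseteq s$, if $\M, t \Vdash \alpha$ then $\M, t \Vdash \beta$. For the direction from right to left, suppose $s \subseteq |\bigwedge_{\rho \in \RR(\alpha)}(\rho \to f(\rho))|_\M$ for some $f : \RR(\alpha) \to \RR(\beta)$. Take any $t \subseteq s$ with $\M, t \Vdash \alpha$; by the IH there is $\rho \in \RR(\alpha)$ with $t \subseteq |\rho|_\M$. Since every world in $t \subseteq s$ satisfies $\rho \to f(\rho)$ classically and satisfies $\rho$, it satisfies $f(\rho)$, so $t \subseteq |f(\rho)|_\M$, whence $\M, t \Vdash \beta$ by the IH. For the harder left-to-right direction, suppose $\M, s \Vdash \alpha \to \beta$. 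For each $\rho \in \RR(\alpha)$, consider the state $s_\rho := s \cap |\rho|_\M$; then $s_\rho \Vdash \rho$ and each element of $\RR(\alpha)$ is a statement hence persistent and downward closed in the right sense, so $\M, s_\rho \Vdash \alpha$ (since $\rho$ is one resolution of $\alpha$), and thus $\M, s_\rho \Vdash \beta$; by the IH pick $f(\rho) \in \RR(\beta)$ with $s_\rho \subseteq |f(\rho)|_\M$. This defines $f : \RR(\alpha) \to \RR(\beta)$. It remains to check $s \subseteq |\bigwedge_{\rho}(\rho \to f(\rho))|_\M$: for any $w \in s$ and any $\rho$, if $w \models \rho$ then $w \in s_\rho \subseteq |f(\rho)|_\M$, so $w \models f(\rho)$; hence $w \models \rho \to f(\rho)$ for all $\rho$, giving $w \in |\bigwedge_\rho(\rho \to f(\rho))|_\M$. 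The delicate point to get right is the claim that $s_\rho \Vdash \rho$ implies $\M, s_\rho \Vdash \alpha$, which uses Proposition \ref{prop.realizesemantics} itself (the right-to-left direction applied to $\rho \in \RR(\alpha)$ with the trivial containment $s_\rho \subseteq |\rho|_\M$) — so the induction must be structured so that this sub-instance is already available, which it is since we are reasoning about $\alpha$ at the same stage; alternatively one invokes the standard inquisitive fact that a state supporting a resolution of a formula supports that formula. I would state that auxiliary fact explicitly and cite it to keep the argument clean.
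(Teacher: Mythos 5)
Your proof is correct. Note that the paper itself gives no proof of this proposition: it is quoted from Ciardelli--Roelofsen with the remark that it ``can be shown by an inductive proof,'' which is exactly what you supply, so your direct induction on the support relation fills in what the paper leaves implicit. The closest worked-out argument in the paper is Theorem \ref{thm.transs5}, which establishes the essentially equivalent fact $\M,s\Vdash\alpha \iff \M_s\vDash\bigvee_{\rho\in\RR(\alpha)}\K\rho$ by a detour through the know-how semantics (Proposition \ref{prop.satemodel} plus an induction whose implication case goes through the equivalence $\Kh(\alpha\to\beta)\lra\K\Box(\Kh\alpha\to\Kh\beta)$ and a case analysis over maximal submodels $\M^{\rho}_s$); your argument is the more elementary, self-contained route, working purely with states and classical truth sets, while the paper's route has the advantage of staying inside its epistemic framework and not presupposing the cited proposition. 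One clean-up to your implication case: the point you flag as delicate is not delicate at all, and your worry about using ``the proposition itself'' is misplaced. The fact you need is that $s_\rho\subseteq|\rho|_\M$ with $\rho\in\RR(\alpha)$ implies $\M,s_\rho\Vdash\alpha$, and this is precisely the right-to-left direction of the induction hypothesis for the proper subformula $\alpha$ of $\alpha\to\beta$, so it is available outright; the auxiliary appeal to resolutions being statements, persistence, or a separately stated ``standard inquisitive fact'' is unnecessary and should be dropped (also note $\RR(\alpha)$ is finite and non-empty, so defining $f$ by choosing one $f(\rho)$ per $\rho$ needs no choice principle). With that phrasing tightened, the induction goes through exactly as you describe.
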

In our know-how perspective, the above proposition actually establishes the equivalence of the state-based inquisitive semantics to the following alternative epistemic semantics for $\Kh\alpha$.
$$\begin{array}{|lcl|}
\hline
\M,w\VDash \Kh\alpha&\iff& \text{ there exists a }
\rho \in \RR(\alpha) \text{ such that } \M,w\vDash \K\rho \\
\hline
\end{array}$$

We can establish the following equivalences, without using Proposition \ref{prop.realizesemantics}. 
\begin{theorem}\label{thm.transs5}
	For any model $\M$ and any non-empty state $s$, and any $\alpha\in\LPL$, the following are equivalent: 
	\begin{enumerate}
	    \item $\M, s\Vdash\alpha $
	    \item $\M_s\vDash\Kh\alpha$ 
	    \item $\M_s\vDash \bigvee_{\rho\in\RR(\alpha)}\K\rho$.
	    \item $\M_s\VDash \Kh\alpha$
	\end{enumerate}
	\end{theorem}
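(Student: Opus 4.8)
The plan is to prove the four-way equivalence by already knowing that (1) $\Leftrightarrow$ (2), so that it suffices to close the loop through (3) and (4). Concretely, I would establish (2) $\Rightarrow$ (3), (3) $\Rightarrow$ (4), and (4) $\Rightarrow$ (2). The equivalence (1) $\Leftrightarrow$ (2) is exactly Proposition \ref{prop.satemodel}, so I may cite it directly and concentrate all the work on the remaining implications, which live entirely in the epistemic semantics over $\M_s$. Since every item is about $\M_s$ and by Proposition \ref{prop.KHK} the truth of $\Kh$-, $\K$-formulas does not depend on the designated world, I will freely write $\M_s \vDash \cdots$ throughout.

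The heart of the argument is (2) $\Rightarrow$ (3): I would prove, for every model $\N$ and every $\alpha \in \LPL$, that $\N \vDash \Kh\alpha$ implies $\N \vDash \bigvee_{\rho \in \RR(\alpha)}\K\rho$, by induction on $\alpha$. The atomic and $\bot$ cases are immediate from $\RR(p)=\{p\}$, $\RR(\bot)=\{\bot\}$ together with $\KKhp$/$\Khbot$ (semantically, Propositions \ref{prop.K2Kh} and \ref{prop.khreduction}). For $\alpha\lor\beta$, use $\KhD$ (Proposition \ref{prop.khreduction}): $\N\vDash \Kh\alpha$ or $\N\vDash\Kh\beta$, apply IH, and note $\RR(\alpha\lor\beta)=\RR(\alpha)\cup\RR(\beta)$. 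For $\alpha\land\beta$, use $\KhC$ to get $\N\vDash\Kh\alpha$ and $\N\vDash\Kh\beta$, apply IH to get witnesses $\rho\in\RR(\alpha)$, $\sigma\in\RR(\beta)$ with $\N\vDash\K\rho\land\K\sigma$, hence $\N\vDash\K(\rho\land\sigma)$, and $\rho\land\sigma\in\RR(\alpha\land\beta)$. For the implication case $\alpha\to\beta$, I would use Proposition \ref{prop.alternativeimp}: $\N\vDash\Kh(\alpha\to\beta)$ means every submodel $\N'\subseteq\N$ with $\N'\vDash\Kh\alpha$ also satisfies $\N'\vDash\Kh\beta$. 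Enumerate $\RR(\alpha)=\{\rho_1,\dots,\rho_m\}$; for each $\rho_j$, consider the submodel $\N_j$ consisting of the worlds of $\N$ satisfying $\rho_j$ classically. If $\N_j$ is non-empty then $\N_j\vDash\K\rho_j$, hence (by IH, noting $\K\rho_j \to \Kh\rho_j$ for a statement $\rho_j$, or more directly since $R(w,\rho_j)$ is a fixed nonempty set on each such world) $\N_j\vDash\Kh\alpha$, so $\N_j\vDash\Kh\beta$, and IH gives some $\sigma_j\in\RR(\beta)$ with $\N_j\vDash\K\sigma_j$. Define $f:\RR(\alpha)\to\RR(\beta)$ by $f(\rho_j)=\sigma_j$ when $\N_j\ne\emptyset$ and arbitrarily otherwise; then $\rho:=\bigwedge_{j}(\rho_j\to f(\rho_j))\in\RR(\alpha\to\beta)$, and one checks $\N\vDash\K\rho$: at every world $w$ of $\N$, if $w$ classically satisfies some $\rho_j$ then $w\in\N_j$ hence $w$ satisfies $\sigma_j=f(\rho_j)$. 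This is the step I expect to be the main obstacle, because it requires carefully matching the functional resolutions of Definition \ref{def.rs} with the conjunction-of-implications resolutions $\RR$, and handling the empty-$\N_j$ bookkeeping; but it is essentially the same finitary construction as in the proof of the $\KhI$ direction in Proposition \ref{prop.khreduction}, so no choice principle is needed.

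For (3) $\Rightarrow$ (4): if $\M_s\vDash \K\rho$ for some $\rho\in\RR(\alpha)$, then by definition of $\VDash$ we immediately get $\M_s\VDash\Kh\alpha$. For (4) $\Rightarrow$ (2): suppose $\M_s\VDash\Kh\alpha$, so there is $\rho\in\RR(\alpha)$ with $\M_s\vDash\K\rho$; I would argue by induction (or invoke that each $\rho\in\RR(\alpha)$ is a statement and is provably stronger than $\alpha$ inside $\Kh$) that $\K\rho \to \Kh\alpha$ is valid for $\rho\in\RR(\alpha)$. A clean route: show by induction on $\alpha$ that for each $\rho\in\RR(\alpha)$ the scheme $\rho\to\Kh\alpha$ holds after prefixing with $\K\Box$, i.e.\ $\K\rho\vDash\Kh\alpha$ — using $\KKhp$ at the base, $\KhD$/$\KhC$ at the connectives, and for implication the fact that $\K(\bigwedge_j(\rho_j\to f(\rho_j)))$ forces, on every submodel satisfying $\Kh\alpha$, a matching resolution of $\beta$, so $\K\Box(\Kh\alpha\to\Kh\beta)$ holds, which is $\Kh(\alpha\to\beta)$ by $\KhI$. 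Chaining the three implications with Proposition \ref{prop.satemodel} closes the cycle and yields the theorem. Finally I would remark that this gives an independent proof of Proposition \ref{prop.realizesemantics} as a corollary, since (1) $\Leftrightarrow$ (4) unpacks to exactly the statement there, and that the equivalence (2) $\Leftrightarrow$ (3) is precisely the semantic content behind the reduction of $\Kh$ to $\K$ used in Lemma \ref{lem.redkh}.
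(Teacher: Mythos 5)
Your overall architecture matches the paper's: (1)$\Leftrightarrow$(2) is Proposition \ref{prop.satemodel}, (3)$\Leftrightarrow$(4) is immediate from the definition of $\VDash$ and finiteness of $\RR(\alpha)$, and the substance is the link between $\Kh\alpha$ and $\bigvee_{\rho\in\RR(\alpha)}\K\rho$, proved by induction with essentially the same construction at the implication case (restricting to the worlds where a given $\rho\in\RR(\alpha)$ holds, and assembling a function $f:\RR(\alpha)\to\RR(\beta)$). However, there is a genuine gap in how you organize the inductions. In your (2)$\Rightarrow$(3) induction, at the case $\alpha\to\beta$ you pass from $\N_j\vDash\K\rho_j$ (with $\rho_j\in\RR(\alpha)$) to $\N_j\vDash\Kh\alpha$ ``by IH''. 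But the induction hypothesis of a one-directional (2)$\Rightarrow$(3) induction only says that $\Kh\alpha$ implies some known formula-resolution; what you need here is the \emph{converse}: that knowing a formula-resolution of $\alpha$ yields $\Kh\alpha$, i.e. the (3)$\Rightarrow$(2) direction for the subformula $\alpha$. (Knowing that $\rho_j$ gives $\Kh\rho_j$, since resolutions are disjunction-free and hence statements, but $\Kh\rho_j\to\Kh\alpha$ is again not available from your IH.) Symmetrically, your sketch of the (4)$\Rightarrow$(2) lemma at the implication case needs to extract, from $\Kh\alpha$ on an arbitrary submodel, a \emph{known formula-resolution} $\rho_j$ to feed into $\K(\rho_j\to f(\rho_j))$ — which is exactly (2)$\Rightarrow$(3) for $\alpha$. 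So the three one-directional proofs, taken in sequence as you propose, are mutually dependent at the implication case and none of them closes on its own.

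The repair is easy and is precisely what the paper does: prove the biconditional (2)$\Leftrightarrow$(3) by a \emph{single simultaneous} induction on $\alpha$, so that at the implication case both directions of the IH are available for $\alpha_1$ and $\alpha_2$. The paper exploits this by first replacing $\Kh\alpha_1$ and $\Kh\alpha_2$ by $\bigvee_{\rho\in\RR(\alpha_1)}\K\rho$ and $\bigvee_{\rho\in\RR(\alpha_2)}\K\rho$ inside $\K\Box(\cdot)$, reducing the case to a pure $\K,\Box$ equivalence; then your submodel construction (their maximal submodel $\M^{\rho}_s$ where $\K\rho$ holds, which is your $\N_j$) goes through without ever needing the problematic step $\K\rho_j\Rightarrow\Kh\alpha$. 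With that reorganization your construction is correct; as written, the cycle (2)$\Rightarrow$(3)$\Rightarrow$(4)$\Rightarrow$(2) does not have well-founded induction hypotheses.
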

\begin{proof}
(1) iff (2) is due to Proposition \ref{prop.satemodel}. (3) iff (4) is based on  the definition of $\VDash$ and the fact that $\RR(\alpha)$ is finite. In the following, we show that  (2) iff (3) by induction on the structure of $\alpha$, where the definition of $\RR$ and classical reasoning play an important role. 
	\begin{itemize}
		\item $\alpha=p$ or $\alpha=\bot$: It is obvious since $\RR(\alpha)=\{\alpha\}$ and $\vDash\Kh\alpha\lra\K\alpha$ in such cases. 
 		\item $\alpha=\alpha_1\vee\alpha_2$: 
		$\M_s\vDash \Kh (\alpha_1\vee\alpha_2) 
		\iff \M_s\vDash \Kh \alpha_1\vee \Kh\alpha_2
		\iff \M_s\vDash \bigvee_{\rho\in\RR(\alpha_1)}\K\rho\vee \bigvee_{\rho\in\RR(\alpha_2)}\K\rho 
		\iff \M_s\vDash \bigvee_{\rho\in\RR(\alpha)}\K\rho$
		\item $\alpha=\alpha_1\wedge\alpha_2$:
		$\M_s\vDash \Kh (\alpha_1\wedge\alpha_2) 
		\iff \M_s\vDash \Kh \alpha_1\wedge \Kh\alpha_2 
		\iff \M_s\vDash \bigvee_{\rho\in\RR(\alpha_1)}\K\rho\wedge \bigvee_{\rho\in\RR(\alpha_2)}\K\rho
		\iff \M_s\vDash \bigvee_{\rho_1\in\RR(\alpha_1), \rho_2\in\RR(\alpha_2)}\K\rho_1\wedge\K\rho_2 
		\iff \M_s\vDash\bigvee_{\rho_1\in\RR(\alpha_1),\rho_2\in\RR(\alpha_2)}\K(\rho_1\wedge\rho_2) 
		\iff \M_s\vDash \bigvee_{\rho\in\RR(\alpha)}\K\rho$		
		\item $\alpha=\alpha_1\to\alpha_2$: 
		 $\M_s\vDash \Kh (\alpha_1\to\alpha_2) 
		\iff \M_s\vDash \K\Box(\Kh\alpha_1\to\Kh\alpha_2)$

		Based on the induction hypothesis and the definition of $\RR(\alpha\to\beta)$ we just need to prove that: 
		
		$$\M_s\vDash \K\Box(\bigvee_{\rho\in\RR(\alpha_1)}\K\rho\to\bigvee_{\rho\in\RR(\alpha_2)}\K\rho) \iff \M_s\vDash \bigvee_{f:\RR(\alpha_1)\to\RR(\alpha_2)}\K\bigwedge_{\rho\in\RR(\alpha_1)}(\rho\to f(\rho)).$$

		$\Longrightarrow$: Given $\M_s\vDash \K\Box(\bigvee_{\rho\in\RR(\alpha_1)}\K\rho\to\bigvee_{\rho\in\RR(\alpha_2)}\K\rho)$, we need to find a  $f:\RR(\alpha_1)\to\RR(\alpha_2)$ s.t. $\M_s\vDash\K\bigwedge_{\rho\in\RR(\alpha_1)}(\rho\to f(\rho))$. 
		As $\RR(\alpha_2)$ is not empty, let $\rho_0$ be a fixed element of $\RR(\alpha_2)$ to be used to define the function $f$. There are two cases to be considered. 
		\begin{itemize}
		 \item 	For any $\rho\in\RR(\alpha_1)$ such that $\M_s\vDash\K\neg \rho$, we have  $\M_s\vDash\K(\rho\to\rho')$ for any $\rho'\in\RR(\alpha_2)$. We can safely define $f(\rho)=\rho_0$.
	    \item For any $\rho\in\RR(\alpha_1)$ such that $\M_s\vDash \hK \rho$, let $\M^{\rho}_s$ be the maximal submodel of $\M_s$ s.t. $\M^{\rho}_s\vDash \K\rho$. 
		By the semantics of $\K$ and $\Box$, if $\M_s\vDash \K\Box(\bigvee_{\rho\in\RR(\alpha_1)}\K\rho\to\bigvee_{\rho\in\RR(\alpha_2)}\K\rho)$, 
		$\M^{\rho}_s\vDash\bigvee_{\rho\in\RR(\alpha_1)}\K\rho\to\bigvee_{\rho\in\RR(\alpha_2)}\K\rho$, 
		then $\M^{\rho}_s\vDash\bigvee_{\rho\in\RR(\alpha_2)}\K\rho$, 
		which means that there is a $\rho'\in\RR(\alpha_2)$ such that $\M^{\rho}_s\vDash\K\rho'$. 
		As $\M^{\rho}_s$ is the maximal submodel of $\M_s$ s.t. $\M^{\rho}_s\vDash \K\rho$, 
		$\M_s\vDash\K(\rho\to\rho')$. We let $f(\rho)=\rho'$. 
		\end{itemize}
	   Now we have defined an $f:\RR(\alpha_1)\to\RR(\alpha_2)$ s.t. $\M_s\vDash\bigwedge_{\rho\in\RR(\alpha_1)}\K(\rho\to f(\rho))$, 
		which is equivalent to $\M_s\vDash\K\bigwedge_{\rho\in\RR(\alpha_1)}(\rho\to f(\rho))$.

		$\Longleftarrow$: 
		Suppose $\M_s\vDash \bigvee_{f:\RR(\alpha_1)\to\RR(\alpha_2)}\K\bigwedge_{\rho\in\RR(\alpha_1)}(\rho\to f(\rho))$ then there is a $f:\RR(\alpha_1)\to\RR(\alpha_2)$ s.t. $\M_s\vDash\K\bigwedge_{\rho\in\RR(\alpha_1)}(\rho\to f(\rho))$. This amounts to $\M_s\vDash\bigwedge_{\rho\in\RR(\alpha_1)}\K(\rho\to f(\rho))$, thus 
		for any submodel $\M'$ of $\M_s$, $\M'\vDash\bigwedge_{\rho\in\RR(\alpha_1)}(\K(\rho\to f(\rho)))$. By the usual distribution axiom by of $\K$,  
		$\M'\vDash\bigwedge_{\rho\in\RR(\alpha_1)}(\K\rho\to \K f(\rho))$. Weakening the consequent, we have 
		$\M'\vDash\bigwedge_{\rho\in\RR(\alpha_1)}(\K\rho\to \bigvee_{\rho\in\RR(\alpha_2)}\K\rho).$ Therefore 
		$\M'\vDash\bigvee_{\rho\in\RR(\alpha_1)}\K\rho\to\bigvee_{\rho\in\RR(\alpha_2)}\K\rho$. Since $\M'$ is an arbitrary submodel of $\M_s$, it follows that  
		$\M_s\vDash \K\Box(\bigvee_{\rho\in\RR(\alpha_1)}\K\rho\to\bigvee_{\rho\in\RR(\alpha_2)}\K\rho).$
	\end{itemize}
\end{proof}
As an immediate consequence, we have the modal translation from $\InqL$ to the epistemic logic $\SFiveL$ and the modal logic $\SK$.  The translation to modal logic $\SK$ (and other normal modal logics) was mentioned by Ciardelli in \cite[Section 6.6]{Ciardelli16phd}, and also in \cite[Section 5.4]{Ciardelli2018} when discussing the modal approach of \cite{Nelken2006} to the semantics of questions. Through the correspondence with the know-how semantics (w.r.t.\ $\VDash$) shown in the above theorem, we can see more intuitively what the translation below is doing.

\begin{corollary}[$\InqL$ to $\SFiveL$ and $\SK$ \cite{Ciardelli16phd,Ciardelli2018}] 
    	For $\alpha\in\LPL$, $\alpha\in\InqL$ iff $\bigvee_{\rho\in\RR(\alpha)}\K\rho\in \SFiveL$ iff $\bigvee_{\rho\in\RR(\alpha)}\K\rho\in \SK$
\end{corollary}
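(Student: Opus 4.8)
The plan is to deduce the corollary directly from Theorem~\ref{thm.transs5} together with the already-established identity $\InqL=\InqKhL$ (Corollary~\ref{coro.equiv}) and Lemma~\ref{lem.modelstate}. The key observation is that all three conditions in the corollary are statements about \emph{validity} of formulas, and Theorem~\ref{thm.transs5} gives us, on every pointed model, an equivalence between $\Kh\alpha$ and the modal formula $\bigvee_{\rho\in\RR(\alpha)}\K\rho$. So the strategy is: first translate ``$\alpha\in\InqL$'' into ``$\vDash\Kh\alpha$'' (over all epistemic models); then translate ``$\vDash\Kh\alpha$'' into ``$\vDash\bigvee_{\rho\in\RR(\alpha)}\K\rho$'' using Theorem~\ref{thm.transs5}; and finally observe that the resulting formula is an $\SFive$-formula (indeed a formula of basic modal logic $\SK$ once we forget the $\SFive$ frame conditions), so its validity over \emph{all} epistemic models is the same as membership in the logic $\SFiveL$, and its validity over all Kripke models is membership in $\SK$.

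First I would spell out step one: by Corollary~\ref{coro.equiv} (or directly Theorem~\ref{ent} with $\Gamma=\emptyset$), $\alpha\in\InqL$ iff $\vDash\Kh\alpha$, i.e.\ iff $\M,w\vDash\Kh\alpha$ for every pointed epistemic model $\M,w$. Step two: Theorem~\ref{thm.transs5} states that for every model $\M$ and every non-empty state $s$, $\M_s\vDash\Kh\alpha$ iff $\M_s\vDash\bigvee_{\rho\in\RR(\alpha)}\K\rho$; since every epistemic model arises as $\M_s$ for a suitable non-empty state (e.g.\ take $s=W_\M$, or simply note $\M=\M_{W_\M}$), and since both $\Kh\alpha$ and $\bigvee_{\rho\in\RR(\alpha)}\K\rho$ are truth conditions that do not depend on the designated world (Proposition~\ref{prop.KHK} for $\Kh$, and $\K$-formulas trivially), we get $\vDash\Kh\alpha$ iff $\vDash\bigvee_{\rho\in\RR(\alpha)}\K\rho$ over the class of all epistemic models. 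Step three: the class of our epistemic models is exactly the class of $\SFive$ Kripke models (single-agent, with the implicit total — hence equivalence — relation), so ``$\bigvee_{\rho\in\RR(\alpha)}\K\rho$ is valid over all epistemic models'' is by definition ``$\bigvee_{\rho\in\RR(\alpha)}\K\rho\in\SFiveL$'', giving the first biconditional.

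For the second biconditional, the point is that $\bigvee_{\rho\in\RR(\alpha)}\K\rho\in\SFiveL$ iff $\bigvee_{\rho\in\RR(\alpha)}\K\rho\in\SK$, and this is where I would invoke the known fact — due to Ciardelli, and implicit in the corollary's citation — that the modal translation $\bigvee_{\rho\in\RR(\alpha)}\K\rho$ of an $\LPL$-formula is valid in $\SK$ iff it is valid in any normal modal logic between $\SK$ and $\SFiveL$. Concretely, one direction ($\SK\Rightarrow\SFiveL$) is trivial since $\SK\subseteq\SFiveL$. For the converse one shows that whenever $\bigvee_{\rho\in\RR(\alpha)}\K\rho$ fails on some Kripke model at a world $w$, one can extract a \emph{counter-state}: the set of $\K$-successors of $w$ (or just the submodel they induce) is, up to bisimulation for the relevant purely modal formulas, an $\SFive$ model falsifying the disjunction, because each disjunct $\K\rho$ with $\rho$ disjunction-free is a ``global'' claim about the successor set only. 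This reduction to the generated subframe of successors, restricted to a reflexive-transitive-symmetric relation on that set, is the standard trick; the $\RR(\alpha)$ being built from disjunction-free $\rho$ is exactly what makes the disjuncts insensitive to anything but the successor set.

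The main obstacle I anticipate is precisely this last reduction from $\SFiveL$-validity to $\SK$-validity of the translated formula: it is not a formal consequence of anything proved earlier in the excerpt, so it must either be cited (the corollary does cite \cite{Ciardelli16phd,Ciardelli2018} for exactly this) or argued by the successor-set/generated-submodel observation sketched above. Everything else — steps one through three — is a clean chaining of Corollary~\ref{coro.equiv}, Theorem~\ref{thm.transs5}, Proposition~\ref{prop.KHK}, and the definition of validity in a modal logic, and should take only a couple of lines. If one wanted a fully self-contained proof, the work would go into verifying that $\bigvee_{\rho\in\RR(\alpha)}\K\rho$, built from disjunction-free $\rho$, has the property that its refutability is witnessed already inside the submodel of $\K$-successors, at which point one equips that submodel with the total relation to land in $\SFive$; but given the paper's framing, citing the existing translation result is the intended route.
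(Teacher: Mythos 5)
Your proposal is correct and follows essentially the same route as the paper: the first equivalence is exactly the chaining of Corollary~\ref{coro.equiv} (via Theorem~\ref{thm.transs5} and Proposition~\ref{prop.KHK}) that the paper intends, and the second rests on the same key observation that $\neg\bigvee_{\rho\in\RR(\alpha)}\K\rho$ is a conjunction of $\hK\neg\rho$ with $\rho$ propositional, so any Kripke countermodel converts into an $\SFive$ countermodel. The only difference is cosmetic: the paper keeps the whole world set and replaces the accessibility relation by its reflexive--symmetric--transitive closure, whereas you restrict to the successor set $\R[w]$ (nonempty since $\RR(\alpha)\neq\emptyset$) and impose the total relation --- both constructions work, and no appeal to bisimulation or to the cited literature is actually needed.
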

\begin{proof}
    The last \textit{iff} is due to the fact that for each pointed Kripke model $\M,w\vDash\neg \bigvee_{\rho\in\RR(\alpha)}\K\rho$ there is an epistemic model $\N, w\vDash \neg \bigvee_{\rho\in\RR(\alpha)}\K\rho$ where $\N=\lr{W_{\M}, \sim, V_\M}$ and $\sim$ is the reflexive,  symmetric, transitive closure of the accessibility relation in $\M$. Note that $\neg \bigvee_{\rho\in\RR(\alpha)}\K\rho$ is equivalent to $\bigwedge_{\rho\in \RR(\alpha) }\hK\neg\rho.$
\end{proof}


Note that although it is natural to think this translation can be compared to G\"odel's translation of intuitionistic logic to $\SFour$, the nature of the two translations are quite different in terms of how to read the modality. In G\"odel's translation, the modality is actually a temporal-epistemic one, namely $\K\Box$ in our perspective \cite{Wang2021}, but the modality here is merely a purely epistemic one.  

As another corollary of Theorem \ref{thm.transs5}, the following important property of $\InqL$ follows.
\begin{corollary}[\cite{Ciardelli2011,Ciardelli2016}]
Any $\alpha\in\PL$ is equivalent to a disjunction of statements\slash negations  in inquisitive logic.
\end{corollary}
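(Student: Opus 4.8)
The statement to prove is the final corollary: any $\alpha\in\PL$ is equivalent to a disjunction of statements/negations in inquisitive logic. The plan is to read this off Theorem~\ref{thm.transs5} together with Proposition~\ref{prop.satemodel} and Corollary~\ref{coro.equiv}. By Theorem~\ref{thm.transs5}, for any model $\M$ and non-empty state $s$ we have $\M,s\Vdash\alpha$ iff $\M_s\vDash\bigvee_{\rho\in\RR(\alpha)}\K\rho$ iff $\M_s\vDash\bigvee_{\rho\in\RR(\alpha)}\Kh\rho$ (using $\vDash\K\rho\lra\Kh\rho$ for statements $\rho$, since each $\rho\in\RR(\alpha)$ is disjunction-free and hence a statement by Proposition~\ref{prop.disjfree}, combined with Proposition~\ref{prop.assert}). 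Pulling the $\Kh$ out of the disjunction via $\KhD$ (Proposition~\ref{prop.khreduction}), this equals $\M_s\vDash\Kh\bigvee_{\rho\in\RR(\alpha)}\rho$, i.e.\ by Proposition~\ref{prop.satemodel}, $\M,s\Vdash\bigvee_{\rho\in\RR(\alpha)}\rho$.

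First I would set $\beta:=\bigvee_{\rho\in\RR(\alpha)}\rho$, noting $\RR(\alpha)$ is finite and non-empty so this is a well-formed $\PL$-formula, and each disjunct $\rho$ is disjunction-free hence (the negation of nothing, but more to the point) a statement. Then I would argue that $\alpha$ and $\beta$ are supported by exactly the same states in every model: for non-empty $s$ this is the chain of equivalences above, and for $s=\emptyset$ both are trivially supported. Hence $\alpha\Vdash\beta$ and $\beta\Vdash\alpha$, i.e.\ $\alpha$ and $\beta$ are inquisitively equivalent, and $\beta$ is by construction a disjunction of statements. If one prefers the ``negations'' formulation, one observes in addition that every disjunction-free formula is classically equivalent to, and in $\InqL$ equivalent to, a double negation of itself (since $\Kh(\neg\neg\rho\to\rho)$ is valid by $\DNkh$-type reasoning for disjunction-free $\rho$, giving $\rho\equiv\neg\neg\rho$ in $\InqL$), so each disjunct can be taken to be a negation.

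Concretely, the steps in order: (i) recall $\RR(\alpha)$ is finite and non-empty and each $\rho\in\RR(\alpha)$ is disjunction-free, hence a statement in every model (Proposition~\ref{prop.disjfree}); (ii) by Proposition~\ref{prop.assert}, $\vDash\K\rho\lra\Kh\rho$ for each such $\rho$; (iii) apply Theorem~\ref{thm.transs5}, $\M,s\Vdash\alpha\iff\M_s\vDash\bigvee_{\rho\in\RR(\alpha)}\K\rho\iff\M_s\vDash\bigvee_{\rho\in\RR(\alpha)}\Kh\rho\iff\M_s\vDash\Kh\bigvee_{\rho\in\RR(\alpha)}\rho$, the last step by $\KhD$; (iv) apply Proposition~\ref{prop.satemodel} to get $\M,s\Vdash\bigvee_{\rho\in\RR(\alpha)}\rho$, and handle $s=\emptyset$ separately; (v) conclude $\alpha$ is inquisitively equivalent to $\bigvee_{\rho\in\RR(\alpha)}\rho$, a disjunction of statements; (vi) optionally replace each $\rho$ by $\neg\neg\rho$ using $\rho\equiv_{\InqL}\neg\neg\rho$ for disjunction-free $\rho$ to get the ``negations'' version.

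There is essentially no serious obstacle here, since all the heavy lifting was done in Theorem~\ref{thm.transs5}; the only point requiring a little care is the bookkeeping around the empty state and the observation that each element of $\RR(\alpha)$ is genuinely disjunction-free (immediate from the definition of $\RR$, which never introduces $\lor$: $\RR(\alpha\lor\beta)=\RR(\alpha)\cup\RR(\beta)$ merely collects the disjunction-free resolutions of the disjuncts). If anything, the ``main obstacle'' is purely expository: deciding whether to phrase the result via statements or via negations, and if the latter, citing the $\InqL$-validity of $\neg\neg\rho\lra\rho$ for disjunction-free $\rho$, which itself follows from the $\Kh$-version of double negation elimination on atoms propagated through conjunction and implication (the disjunction-free connectives), matching the spirit of $\DNkh$ and the proof of Proposition~\ref{prop.axvalidity}.
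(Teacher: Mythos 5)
Your proposal is correct and follows essentially the same route as the paper: both rest on Theorem~\ref{thm.transs5} together with Proposition~\ref{prop.disjfree}, Proposition~\ref{prop.assert} and the $\KhD$ equivalence to pass from $\Kh\alpha$ to $\Kh\bigvee_{\rho\in\RR(\alpha)}\rho$. The only (immaterial) difference is at the end, where you conclude by statewise mutual entailment via Proposition~\ref{prop.satemodel} (plus the empty-state check and the optional $\neg\neg\rho$ rephrasing), whereas the paper wraps up with the rule $\RKhI$ and Corollary~\ref{coro.equiv} to place $\alpha\lra\bigvee_{\rho\in\RR(\alpha)}\rho$ in $\InqL$.
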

\begin{proof}
From Theorem \ref{thm.transs5}, $\vDash\Kh\alpha \lra  \bigvee_{\rho\in\RR(\alpha)}\K\rho$. Due to Proposition  \ref{prop.disjfree}, $\rho\in \RR(\alpha)$ is a statement. Therefore by Proposition \ref{prop.assert}, $\vDash\Kh\alpha \lra  \bigvee_{\rho\in\RR(\alpha)}\Kh \rho$, thus   $\vDash\Kh\alpha \lra  \Kh \bigvee_{\rho\in\RR(\alpha)}\rho$. By the validity of the Rule $\RKhI$, we have $\vDash \Kh (\alpha \lra \bigvee_{\rho\in\RR(\alpha)}\rho).$ By Corollary  \ref{coro.equiv}, we have $\alpha\lra \bigvee_{\rho\in\RR(\alpha)}\rho\in \InqL$.   
\end{proof}

The above corollaries show that inquisitive logic can be viewed as a fragment of normal epistemic logic technically. However, as argued in \cite{Ciardelli2018} regarding the modal approach of \cite{Nelken2006}, those modal formulas do not preserve the surface structure of sentences and questions in the natural language. Actually, as shown by our approach, such a reduction to standard epistemic logic is the result of the assumption in inquisitive logic that each atomic proposition has a unique resolution. In similar settings such as intuitionistic logic and Medvedev logic, this is not possible. Using our powerful language $\InqKhL$, we can keep the structure of statements and questions as they are in the natural language by $\Kh\alpha$ on the one hand, and reveal its epistemic meaning by the reductions on the other hand. We do not need to take sides between the technical and conceptual convenience. 

Yet another consequence of Theorem \ref{thm.transs5} is about the limitation of inquisitive logic over models.
\begin{corollary}
Inquisitive logic is less expressive than $\LEL$.  
\end{corollary}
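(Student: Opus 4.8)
The plan is to get one inclusion for free from Theorem~\ref{thm.transs5} and then produce a single separating formula for strictness. For the inclusion, Theorem~\ref{thm.transs5} already shows that for every $\alpha\in\LPL$ the formula $\Kh\alpha$ is equivalent, over all epistemic models, to the $\LEL$-formula $\bigvee_{\rho\in\RR(\alpha)}\K\rho$; combined with Lemma~\ref{lem.modelstate} (which identifies $\M,W_\M\Vdash\alpha$ with $\M\vDash\Kh\alpha$), this says that every model property definable in inquisitive logic is already definable in $\LEL$. So the content of the corollary is the strict part: some $\LEL$-formula defines a property that no $\Kh\alpha$ defines.

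For the separation I would take the $\LEL$-formula $\hK p$. First I would note that $\hK p=\neg\K\neg p$ is point-independent, so ``$\M\vDash\hK p$'' is a genuine property of models, namely ``some world of $\M$ satisfies $p$''. Next I would use Proposition~\ref{persistence} (persistence: $\Kh\alpha\lra\Box\Kh\alpha$ is valid) to observe that the model properties of the form $\{\M\mid\M\vDash\Kh\alpha\}$ are closed under taking (non-empty) submodels: if $\M\vDash\Kh\alpha$ and $\M'\subseteq\M$, pick any $w\in\M'$; then $\M,w\vDash\Box\Kh\alpha$, hence $\M',w\vDash\Kh\alpha$, hence $\M'\vDash\Kh\alpha$ by Proposition~\ref{prop.KHK}. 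Finally I would point out that ``some world satisfies $p$'' is not closed under submodels: take $\M=\langle\{w,v\},\V\rangle$ with $p\in\V(w)$ and $p\notin\V(v)$, so $\M\vDash\hK p$, while its submodel on $\{v\}$ refutes $\hK p$. Hence $\hK p$ is not equivalent over epistemic models to any $\Kh\alpha$, and inquisitive logic is strictly less expressive than $\LEL$.

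There is no genuinely hard step here: the inclusion is exactly Theorem~\ref{thm.transs5}, and the separation collapses to the well-known failure of persistence for $\hK p$, witnessed by a two-world model. The only point requiring a little care is pinning down the intended reading of ``expressive over models'': that we compare model-validity $\M\vDash(\cdot)$ and that every inquisitive formula expresses only a point-independent property (Proposition~\ref{prop.KHK} and Lemma~\ref{lem.modelstate}). If one instead reads the comparison over \emph{pointed} models, the separation is even cheaper---$\Kh\alpha$ is always point-independent whereas the atom $p$ is not---so I would phrase the final counterexample to match whichever reading fits the surrounding exposition.
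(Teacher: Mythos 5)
Your proposal is correct and follows essentially the same route as the paper: the paper also separates via $\hK p$, asserting (on the basis of Theorem~\ref{thm.transs5}) that inquisitive formulas can only express disjunctions of positive $\K$-formulas, hence not $\hK p$. Your persistence/submodel-closure argument with the two-world counterexample simply makes explicit the justification the paper leaves implicit, and the inclusion part via Theorem~\ref{thm.transs5} matches the paper's setup.
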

\begin{proof}
Note that inquisitive logic can only say things in terms of disjunction over positive $\K\alpha$ formulas, thus formulas like $\hK p$ are simply not expressible. 
\end{proof}

With the classical connectives in hand, we can express various things which were not expressible in the standard inquisitive logic, such as the classical negation of an inquisitive formula $\alpha$ (simply by $\neg\Kh\alpha$) (cf. \cite{Puncochar2015} for the study within the framework of inquisitive logic). The classical connectives and normal know-that modality give us lots of flexibility in capturing mixed reasoning with both inquisitive and classical propositions.


\subsection{Comparison with inquisitive modal logic}

In the literature, \textit{inquisitive modal logic} is proposed as a conservative extension of normal modal logic by introducing the modality in the language and separating the inquisitive disjunction $\inqlor$ with the classical disjunction $\lor$ (cf. e.g., \cite{Ciardelli16phd}). To avoid potential confusion with our $\Box$ modality, we denote the inquisitive modality as $\boxdot$ in this subsection. With $\boxdot$ at hand, one can naturally express formulas combining the modality and questions, such as $\boxdot (p\inqlor \neg p)$, which says \textit{knowing whether $p$} in an epistemic setting. Correspondingly, the information model is extended with a binary relation $\Rel$, not to be confused with the resolution function $R$, to interpret the modality with the following extra semantic clause in Definition 6.1.3 in \cite{Ciardelli16phd}: 
$$\begin{array}{|lcl|}
\hline
\M,s\Vdash \boxdot\alpha&\iff& \text{for all $w \in s$, $\M, \Rel[w] \Vdash \alpha$}\\
\hline
\end{array}$$
\noindent where $\Rel[w]=\{ v \mid w\Rel v \text{ in $\M$}\}$.
When $s$ is a singleton set, we can derive the following semantics for $\boxdot$ over \textit{worlds} (Proposition 6.1.4 in \cite{Ciardelli16phd}): 
$$\begin{array}{|lcl|}
\hline
\M,w\Vdash \boxdot\alpha&\iff& \M, \Rel[w] \Vdash \alpha\\
\hline
\end{array}$$
By the above support-based semantics, $\boxdot\alpha$ is always truth-conditional, i.e., $s$ supports $\boxdot\alpha$ iff it is
true at each world $w \in s$ (cf. \cite{Ciardelli16phd}). When $\alpha$ is also truth-conditional, the above semantics of $\boxdot\alpha$ over worlds clearly boils down to the Kripke semantics for $\boxdot$: $\boxdot\alpha$ is true at $w$ if $\alpha$ is true at each $v$ accessible from $w$. In \cite{Ciardelli16phd}, a general method of axiomatizating the logic is provided that works with various frame conditions. 

Since this is another way of combining modalities and inquisitive connectives, it deserves a comparison with our approach, in particular about the semantics of $\boxdot$ under an epistemic reading and our $\Kh$. 

By definition, $\boxdot$ relies on the support relation, while $\Kh$ has the $\exists\K$-shape semantics that relies on the resolutions as in Definition \ref{def.resolution}. However, they are connected deeply and come down to the same truth condition on pointed models. To see this, first let us consider the special case where $\Rel$ is the total-relation in a model $\M$ for inquisitive modal logic. Therefore, for any $w\in W_\M$, $\Rel[w]$ is exactly the set of possible worlds $W_\M$. Given $\M$, let $s_t$ be the trivial state $W_\M$, and let $w\in s_t$, the support semantics of $\boxdot\alpha$ over $s_t$ collapses into the one for the non-modal $\alpha$:
$$\begin{array}{|lcl|}
\hline
\M,s_t\Vdash \boxdot\alpha&\iff& \M, s_t \Vdash \alpha\iff \M,w\Vdash\boxdot\alpha\\
\hline
\end{array}$$
Note that by Lemma \ref{lem.modelstate}, we have: 
$$\begin{array}{|lcl|}
\hline
\M,s_t\Vdash \alpha&\iff& \M \vDash \Kh \alpha\iff \M,w\vDash \Kh\alpha\\
\hline
\end{array}$$
From the above two observations, we can establish that for any $\alpha\in\LPL$:
$$\begin{array}{|lcl|}
\hline
\M,w\Vdash \boxdot\alpha& \iff& \M,w \vDash\Kh\alpha\\
\hline
\end{array}$$
The above equivalence still holds on models with an arbitrary relation $\Rel$, \footnote{It is observed by the anonymous reviewer.} not just the total one,  given that we generalize our $\Kh$ semantics naturally to models with $\Rel$, as in  logics of \textit{know-wh} (cf. e.g.,  \cite{Wang2018}). 
$$\begin{array}{|lcl|}
\hline
\M,w\models \Kh\alpha&\iff& \text{ there exists an }  x \text{ s.t. for any }  v\in \Rel[w], x\in\R(v,\alpha)\\
\hline
\end{array}$$
Under this more general semantics of $\Kh$ over models with an arbitrary relation $\Rel$, we can establish: 
$$\begin{array}{|lll|}
\hline
\M,w\Vdash \boxdot\alpha&\iff \M,R[w]\Vdash \alpha&\iff \M,w\models\Kh\alpha\\
\hline
\end{array}$$
The second equivalence is again an application of Lemma \ref{lem.modelstate}  under the help of Proposition \ref{prop.irre}. Essentially, the equivalence is due to the fact that the intended function of $\Kh$ is exactly to capture the support relations between a state and an inquisitive formula, and the truth conditional semantics of $\boxdot$ gave $\Rel[w]$ as such a state. Thus the two different routes that $\Kh$ and $\boxdot$ take in their apparent differently semantics converge to the same truth condition eventually. This equivalence is also suggested by Propositions \ref{prop.realizesemantics} and Theorem \ref{thm.transs5} showing that the support semantics can be viewed as a resolution-based semantics. However, in general, it is not guaranteed that the (re)solution-based semantics, such as the one for Medvedev's logic, can be transformed into an equivalent state-based semantics over information models, for the structure of resolutions for (atomic) propositions is richer than the mere truth values on each world. In the case of inquisitive logic, the situation is very much simplified by assuming the atomic propositions have one and only possible resolution.


There is also an interesting mismatch between our work and inquisitive modal logic. In our setting, we explicitly separate the different roles of the \textit{modalities} by using both $\K$ and $\Kh$. On the other hand, we use the same symbol $\lor$ for inquisitive and classical \textit{disjunctions}. In contrast, in inquisitive modal logic, the modality has a double role to play depending on what is in the scope, while the two disjunctions are differentiated explicitly by distinct symbols. 

The difference is firstly conceptual. From our epistemic perspective, the non-classical behavior of logical connectives are due to the implicit epistemic modality $\Kh$. From the more recent perspective of inquisitive logic \cite{Ciardelli16phd}, the separation of the two disjunctions makes it more clear that the inquisitive logic can be viewed as an extension of classical logic with the inquisitive operators. The difference also leads to various  logical properties. For example, since we use the same symbol $\lor$ for these two disjunctions, it can function differently inside and outside the scope of $\Kh$. It follows that the usual unconditional rule of \textit{replacement of equals} in the scope of a modality is invalid, thus making $\Kh$ a \textit{hyperintensional operator}. On the other hand, by having $\K$ and $\Kh$ explicitly, we can differentiate them in the axioms and reveal how negation and atomic propositions can act as bridges to connect the two types of knowledge, as in $\K p\lra\Kh p$ and $\Kh\neg \alpha\lra \K\neg\alpha$. Note that, thanks to our choice of using the same $\lor$ symbol, the axioms such as $\Kh\alpha\to\K \alpha$ can be written in a natural way, without introducing unnecessary translations of formulas. We think each approach has its features and advantages. In particular, the inquisitive modality has the very elegant feature of deriving the semantics of statements in terms of \textit{know+embedded} questions compositionally (cf. \cite[Section 6.2]{Ciardelli16phd}). The combination of the two approaches can be explored in the future. 

Beyond the above connections and differences, $\Kh$ and $\boxdot$ are very different in the motivation behind them. The point of our $\Kh$ operator is to capture the epistemic content \textit{already} in the support semantics, while the modality in inquisitive modal logic is to \textit{add} the modal information into the picture. Moreover, in our framework, further  modalities and connectives are used to ``open up'' the $\Kh$ formulas to reveal their intuitive epistemic readings. In a nutshell, we want to turn the inquisitive formulas into classical ones with also epistemic and dynamic modalities to obtain intuitive epistemic readings of them. For example, our approach also features a dynamic modality $\Box$ to open up  $\Kh(\alpha\to\beta)$ by the equivalent $\K\Box (\Kh\alpha\to \Kh\beta)$, i.e., knowing how $\alpha$ implies $\beta$ means knowing that whenever one knows how $\alpha$ is true, one also knows how $\beta$ is true. All these extra modalities and the classical connectives helped us to ``decode'' the non-classical behaviors of the inquisitive logic, from our epistemic perspective. As our intuitive axioms showed, the information behind the innocent-looking propositional formulas of inquisitive logic is very rich, under the epistemic view of point.    
 Interestingly, as we also showed in the paper, $\Box$ and $\Kh$ can be eliminated eventually. 

There is one more distinction to be pointed out. In our setting, the modalities cannot occur in the scope of $\Kh$ operators, but this may not be an essential restriction (at least for the modality $\K$), given the discussion on the resolutions for inquisitive modalities (cf. \cite[Section 6.3]{Ciardelli16phd}). We leave the study of the extended language for a future occasion.\footnote{We also thank the anonymous reviewer for pointing out this to us. }



\medskip

As another incarnation of inquisitive modal logic with dynamics,  \textit{Inquisitive Dynamic Epistemic Logic} ($\InqDEL$) is proposed and studied in \cite{CiardelliR15,VanGessel2020}. Since our approach is also  \textit{dynamic-epistemic} in nature, it also deserves some comparison with $\InqDEL$. Again, our approach is to reveal the dynamic-epistemic structure implicitly in the existing inquisitive logic from the epistemic perspective, whereas $\InqDEL$ extends the version of inquisitive semantics with extra epistemic structures and dynamics. In the models of $\InqDEL$, there is an \textit{issue function} $\Sigma$  assigning each possible world an \textit{issue} $\Sigma(w)$, i.e., a non-empty, downward closed set of states, satisfying some intuitive epistemic conditions. At each world $w$, the set of epistemically indistinguishable worlds $\sigma(w)$ is then defined as $\bigcup \Sigma(w).$ Moreover, the work of \cite{CiardelliR15} is based on a specific version of inquisitive semantics in \cite{Ciardelli2015} where a dichotomous syntax, distinguishing declarative and interrogative sentences is used instead of the unified framework of \cite{Ciardelli2011}. On top of this dichotomous language, the know-that modality and an extra modality of \textit{entertain} are added, where the latter modality can describe the issue currently in concern.\footnote{A similar dynamic epistemic approach handling issues is \cite{VanBenthem2012} (cf. \cite{CiardelliR15} for a detailed comparison between \cite{CiardelliR15} and \cite{VanBenthem2012}).} In contrast, as we mentioned, the dynamic operator $\Box$ in our setting is merely to capture the inquisitive implication. In the model, we also do not have the structures of issues.

\section{Conclusions}\label{sec.conc}
This paper is a case study of the general research programme proposed in \cite{Wang2021} on ``epistemicizating'' intuitionistic logic and its relatives. We showed that, as an alternative interpretation, the propositional inquisitive logic $\InqL$ can be viewed as a (dynamic) epistemic logic of knowing how over standard S5 epistemic models. In our approach, an inquisitive formula $\alpha$ being supported by a state $s$ is formalized as it is known how to resolve $\alpha$ (or simply knowing how $\alpha$ is true). We start by turning an inquisitive formula $\alpha$ into the equivalent know-how formula $\Kh\alpha$ in our framework. Then by using modalities of know-that $\K$ and informational updates $\Box$ based on classical connectives, we can \textit{unload} the epistemic contents hidden in such know-how formulas $\Kh\alpha$ by reducing the complexity of $\alpha$ step by step. From the point of view of the general programme of \cite{Wang2021}, $\InqL$ is a particularly interesting case since the corresponding know-how modality can be eventually eliminated based on the fact that the resolution of each atomic proposition is \textit{unique}, which is the reason that the axiom $\neg\neg p\to p$ holds for atomic propositions $p$ in inquisitive logic. In our framework, it amounts to the crucial axiom $\K p \to \Kh p$, i.e., \textit{knowing that} $p$ is true implies the apparently stronger \textit{knowing how} it is true, which can help to reduce the know-how operator eventually. Given such a simplification, technically, we can view inquisitive logic as a fragment of standard epistemic logic. 
\medskip

What we have presented so far is clearly only the beginning of an interesting story regarding the classical \textit{``epistemicization''} of the intuitionistic logic and its friends. Here we just list a few further directions. 

\begin{itemize}
\item Given the close connections between inquisitive logic and dependence logic (cf. e.g., \cite{Ciardelli2016}), it is definitely interesting to see how we can give epistemic interpretations of dependence logic in various forms, where in the semantics a \textit{team} can be viewed as an epistemic model. Note that as observed by \cite{CiardelliB19}, the crucial notion of disjunction in dependence logic, i.e., the tensor, is not directly definable in inquisitive logic. This also presents a challenge to the epistemicization of dependence logic in our framework since we would like to unload the epistemic content of tensor disjunction in a compositional manner. See \cite{tensor22} for the first attempt. 
\item As in intuitionistic logic, quantifiers may bring extra complications, so it is interesting to see how we can extend our work to capture the first-order inquisitive logic \cite{Ciardelli09}. 
\item Our approach also opens a natural way to extend inquisitive logic to a multi-agent setting. However, our reduction of the dynamic operator relies on the single-agent setting. It remains to see whether we can still reduce the multi-agent epistemicization of inquisitive logic to the multi-agent S5. 
\item The interpretation of inquisitive formulas in our setting as \textit{knowing how $\alpha$ is true} has an obvious connection with truthmaker semantics based on the  intuitionistic spirit \cite{Fine14}. The exact connections invite close investigations. 
\item It is also interesting to see how we can combine the idea of inquisitive modal logics with our work to benefit from both frameworks. 
\end{itemize}

\paragraph{Acknowledgement}
The authors thank Hans van Ditmarsch for pointing out the reduction of $\Box$ in single-agent Arbitrary Public Announcement Logic, and thank Feng Ye and Shengyang Zhong for pointing out a gap in a proof of a previous version. The authors also would like to thank  Ivano Ciardelli and Fan Yang for the connections and relevant results about propositional dependence logic and inquisitive modal logic. The authors are very grateful to the reviewer of this journal whose insightful comments helped in improving the presentation of the paper. Yanjing Wang acknowledges the support from the NSSF grant 19BZX135. 

\bibliographystyle{plain}

\appendix
\section{Remaining proof for Proposition \ref{prop.box}}
\label{sec.app}

\begin{proof}
In the following, let $\SFive_{\K}$ be the proof system of $\SFiveL$ including $\AxTK,\AxTransK$, and $\AxEucK$.
\text{For \BD:}
{\setcounter{equation}{0}}
\begin{align}
{\setcounter{equation}{0}}
	&\vdash\alpha\lra\Box\alpha&&\Per,\AxTB, \label{1}\\
	&\vdash\alpha\lor\Box\varphi\lra\Box\alpha\lor\Box\varphi&&(\ref{1})\rRE\label{2}\\
	&\vdash\Box\alpha\lor\Box\varphi\to\Box(\alpha\lor\varphi)&& \SFour_{\Box}\label{3}\\
	&\vdash\alpha\lor\Box\varphi\to\Box(\alpha\lor\varphi)&& (\ref{2})(\ref{3})\rRE\label{8}\\
	&\vdash\Box(\alpha\lor\varphi)\to(\alpha\land\Box(\alpha\lor\varphi))\lor(\neg\alpha\land\Box(\alpha\lor\varphi)) &&\TAUT\label{4}\\
	&\vdash\neg\alpha\lra\Box\neg\alpha&&\Per, \AxTB\label{10}\\
	&\vdash(\neg\alpha\land\Box(\alpha\lor\varphi))\leftrightarrow(\Box\neg\alpha\land\Box(\alpha\lor\varphi)) &&(\ref{10})\rRE\label{5}\\
	&\vdash(\Box\neg\alpha\land\Box(\alpha\lor\varphi))\leftrightarrow\Box(\neg\alpha\land(\alpha\lor\varphi)) &&\SFour_{\Box}\label{6}\\
	&\vdash\Box(\neg\alpha\land(\alpha\lor\varphi))\leftrightarrow(\Box\neg\alpha\land\Box\varphi) &&\SFour_{\Box}\label{7}\\
	&\vdash(\neg\alpha\land\Box(\alpha\lor\varphi))\to\Box\varphi &&(\ref{5})(\ref{6})(\ref{7})\TAUT\label{12}\\
	&\vdash\Box(\alpha\lor\varphi)\to(\alpha\land\Box(\alpha\lor\varphi))\lor\Box\varphi&&(\ref{4})(\ref{12})\SFour_{\Box}\label{11}\\
	&\vdash\Box(\alpha\lor\varphi)\to\alpha\lor\Box\varphi &&(\ref{11})\TAUT\label{9}\\
	&\vdash\Box(\alpha\lor\varphi)\leftrightarrow \alpha\lor\Box\varphi&&(\ref{8})(\ref{9})\TAUT
\end{align}
 \text{ For \BKD}: Let $\phi$ be  ($\hK\alpha\lor\K\alpha_1\lor\dots\lor\K\alpha_n$) and let $\psi$ be ($\alpha\lor\K(\alpha\lor\alpha_1)\lor\dots\lor\K(\alpha\lor\alpha_n)$) below. 
 {\setcounter{equation}{0}}
\begin{align}
{\setcounter{equation}{0}}
	&\vdash\alpha\to\Box\alpha&&\Per\label{22}\\
	&\vdash\Box\alpha\to\Box\hK\alpha&& \SFive_{\K},\SFour_{\Box}\label{23}\\
	&\vdash\alpha\to\Box\hK\alpha&& (\ref{22})(\ref{23})\TAUT\label{24}\\
	&\vdash\alpha\to\Box\phi&& (\ref{24})\SFive_{\K},\SFour_{\Box}\label{26}\\
	&\vdash\alpha\to(\psi\to\Box\phi)&& (\ref{26})\TAUT\label{27}\\
	&\vdash(\alpha\lor\alpha_1)\to\Box(\alpha\lor\alpha_1)&& \Per\label{29}\\
	&\vdash\K(\alpha\lor\alpha_1)\to\K\Box(\alpha\lor\alpha_1)&& (\ref{29})\SFive_{\K}\label{30}\\
	&\vdash\K\Box(\alpha\lor\alpha_1)\to\Box\K(\alpha\lor\alpha_1)&& \PR\label{31}\\
	&\vdash\K(\alpha\lor\alpha_1)\to\Box\K(\alpha\lor\alpha_1)&& (\ref{30})(\ref{31})\TAUT\label{32}\\
	&\vdash\K(\alpha\lor\alpha_1)\to(\hK\alpha\lor\K\alpha_1)&& \SFive_{\K}\label{33}\\
	&\vdash\Box\K(\alpha\lor\alpha_1)\to\Box(\hK\alpha\lor\K\alpha_1)&& (\ref{33})\SFour_{\Box}\label{34}\\
	&\vdash\K(\alpha\lor\alpha_1)\to\Box(\hK\alpha\lor\K\alpha_1)&& (\ref{32})(\ref{34})\TAUT\label{35}\\
	&\vdash\K(\alpha\lor\alpha_1)\to\Box\phi&& (\ref{35})\SFour_{\Box}\label{36}\\
	&\vdash\K(\alpha\lor\alpha_i)\to\Box\phi\ (i\in\{1,\dots,n\})&& (\ref{36})\label{37}\\
	&\vdash\psi\to\Box\phi&& (\ref{26})(\ref{37})\TAUT\label{38}\\
    &\vdash\neg\alpha\land\!\bigwedge_{1\leq i\leq k}\hK(\neg\alpha\land\neg\alpha_i)\!\to\!\Diamond(\K\neg\alpha\land\!\bigwedge_{1\leq i\leq k}\hK\neg\alpha_i)
    &&\EU_k\label{60}\\
    &\vdash\neg\Diamond(\K\neg\alpha\land\!\bigwedge_{1\leq i\leq k}\hK\neg\alpha_i)\!\to\!\neg(\neg\alpha\land\!\bigwedge_{1\leq i\leq k}\hK(\neg\alpha\land\neg\alpha_i))
    &&(\ref{60})\TAUT\label{61}\\
    &\vdash\Box(\hK\alpha\lor\!\bigvee_{1\leq i\leq k}\K\alpha_i)\!\to\!(\alpha\lor\!\bigvee_{1\leq i\leq k}\K(\alpha\lor\alpha_i))
    &&(\ref{61}) \SFour_{\Box}\label{62}\\
    &\vdash\Box\phi\to\psi
    &&(\ref{62}) \label{63}\\
	&\vdash\Box\phi\leftrightarrow\psi 
	&&(\ref{38})(\ref{63})\MP
\end{align}
\end{proof}
\end{document}